%------
% This is a template file for typesetting papers to appear in
% the Journal of the European Mathematical Society (JEMS).
%------
% Before you edit this file, please read
% Guidelines-Journals.pdf
%------
\documentclass{article}
\usepackage[journal=APDE,lang=british]{ems-journal-jems} %% change to `american' if you use American English

%------
% Include here your personal symbol definitions
% and macros as well as any extra LaTeX packages
% you need. Do not include any commands/packages
% that alter the layout of the page, e.g. height/width.
%------
% Do not include packages that are already loaded:
%   amsthm
%   amsmath
%   amssymb
%   enumitem
%   geometry
%   caption
%   graphicx
%   hyperref
%   fontenc
%   inputenc
% as well as:
%   array, babel, booktabs, cite, float, footmisc, kvoptions,
%   multicol, nag, newtxmath, newtxtext, pdf14, pdftexcmds,
%   ragged2e, upref, url, xcolor, xpatch, zref-base

%\usepackage{nameref}
\usepackage{textcomp}
\usepackage{mathrsfs}

\usepackage{chngcntr}
\usepackage{apptools}
\usepackage{verbatim}
\usepackage{tikz-cd}
\usepackage{subcaption}

\newtheorem{proposition}{Proposition}

\theoremstyle{definition}

\newtheorem{conjecture}{Conjecture}

\theoremstyle{remark}
\newtheorem{remark}{Remark}
\newtheorem{corollary}{Corollary}

\numberwithin{equation}{section}

%------

\newcommand\restr[2]{{% we make the whole thing an ordinary symbol
		\left.\kern-\nulldelimiterspace % automatically resize the bar with \right
		#1 % the function
		\vphantom{\big|} % pretend it's a little taller at normal size
		\right|_{#2} % this is the delimiter
}}

% To include the section number in the equation numbering:
\numberwithin{equation}{section}

\numberwithin{theorem}{section}
\numberwithin{proposition}{section}
\numberwithin{lemma}{section}
\numberwithin{remark}{section}
\numberwithin{exercise}{section}
\numberwithin{corollary}{section}

\newcounter{desccount}

\begin{document}

%------
% Insert the title of your paper and (if necessary)
% a short title for the running head.
%------
\title{Robust upper estimates for topological entropy via nonlinear constrained optimization over adapted metrics}
\titlemark{Nonlinear constrained optimization over adapted metrics}

%------

%%%% Pls fill in all fields for each author
%%%% Label the authors by their position in the authors' list using {}
%%%% Pls look for the MR Author ID in the metadata.xml file in the source bundle
%%%% ORCID is only to be added if provided by the author
%%%% Abbreviate first names for the running head
\emsauthor{1}{
	\givenname{Mikhail}
	\surname{Anikushin}
	%\mrid{1234567}
	\orcid{0000-0002-7781-8551}}{M.M.~Anikushin}
%%%% Repeat the same fields for each numbered author
\emsauthor{2}{
	\givenname{Andrey}
	\surname{Romanov}
	%\mrid{}
	\orcid{0009-0008-4886-2351}}{A.O.~Romanov}
%\emsauthor*{3}{
%	\givenname{Someone}
%	\surname{Else}
%	\mrid{}
%	\orcid{}}{S.~Else}

%%%% Please provide detalied address info for each author
%%%% Use the same numbering as for \emsauthor above
%%%% Please look up the ROR ID of your institute here: https://ror.org
\Emsaffil{1}{
	\pretext{}
	\department{Department of Applied Cybernetics}
	\organisation{Faculty of Mathematics and Mechanics, St Petersburg University}
	\rorid{01a2bcd34}
	\address{Universitetskiy prospekt 28}
	\zip{198504}
	\city{Peterhof}
	\country{Russia}
	\posttext{}
	\affemail{demolishka@gmail.com}
	}
\Emsaffil{2}{
	\pretext{}
	\department{Department of Applied Cybernetics}
	\organisation{Faculty of Mathematics and Mechanics, St Petersburg University}
	\rorid{01a2bcd34}
	\address{Universitetskiy prospekt 28}
	\zip{198504}
	\city{Peterhof}
	\country{Russia}
	\posttext{}
	\affemail{romanov.andrey.twai@gmail.com}
}
%%%% Repeat the same fields for each numbered author
%%%% If some author has multiple affiliations, repeat the fields for each affiliation
%%%% Number the affiliations using {}
	
%% If a corresponding author is needed, use \emsauthor* instead of \emsauthor

%%%% Appendix authors are treated analogously with the only difference 
%%%% that they do not have the abbreviated name field
	
%%%% If a regular author is an Appendix author as well, repeat their data 
%%%% using  \Appendixauthor* instead of \Appendixauthor

%\Appendixauthor*{2}{
%	\givenname{Mikhail}
%%	\surname{Anikushin}
%	\mrid{1234567}
%	\orcid{0000-0001-0002-0003}}
%\Appendixaffil{2}{
%	\pretext{}
%	\department{}
%	\organisation{}
%	\rorid{}
%	\address{}
%	\zip{}
%	\city{}
%	\country{}
%	\posttext{}
%	\affemail{}
%	\furtheremail{}}

%------
% Add MSC 2020 codes according to www.ams.org/msc/msc2020.html.
% Secondary codes (in square brackets) are optional.
%------
\classification[37M22, 90C26]{37M25}
%------
% Add a list of keywords. Only capitalise those keywords that start with a proper name.
%------
\keywords{topological entropy, adapted metrics, dimension estimates, constrained optimization, ergodic optimization}

%------
% Insert your abstract.
%------
\begin{abstract}
	We present an analytical-numerical method providing robust upper estimates for the topological entropy or, more generally, uniform volume growth exponents of differentiable mappings. By introducing varying metrics, we simplify the analysis at the cost of generally rougher bounds, but keeping the prospect of choosing more relatable metrics to refine the estimates. With any covering of an invariant set by a finite number of cubes, we associate a graph describing overlaps (edges) of the cubes (vertices) under iterates of the mapping. Weighing vertices according to a given metric, we reduce the problem to finding simple cycles with maximal relative weights. Then we develop an algorithm concerned with iterative resolving nonlinear programming problems for optimization of maximal relative weights in general smooth families of metrics which may involve interpolation or neural networks models. We describe applications of the algorithm to compute the largest uniform Lyapunov exponent and uniform Lyapunov dimension for the H\'{e}non and Rabinovich systems justifying the Eden conjecture at stationary and periodic points respectively.
\end{abstract}

\maketitle
\tableofcontents
%------
% INSERT THE BODY OF THE PAPER HERE (except
% acknowledgments, funding info and bibliography)
%------

\section{Introduction}

In the study of dissipative dynamical systems, one is interested in the structure of attractors. There are many known characteristics of attractor dynamics related to dimension, entropy and volume expansion. However, when it comes to computation, especially in the case of applied nonlinear models, we face obstacles related to the lack of rigorous justification for what is being computed in numerical experiments.

One of basic reasons for this is that most of such characteristics are discontinuous w.r.t. the system in adequate generality. So, we need a deep knowledge about a particular system to be able to compute them. Moreover, the possibility of discontinuous oscillations under perturbations of the system casts doubts on the necessity of exact computation of such characteristics or at least demands understanding relevant disturbances.

As an alternative, one may try to bound the characteristics of interest by quantities which are more robust and computable. A classical result of this kind is the Douady-Oesterl\'{e} estimate \cite{DouadyOesterle1980} for the Hausdorff dimension which led to the notion of (uniform) Lyapunov dimension and various generalizations in finite and infinite dimensions (see R.~Temam \cite{Temam1997}; V.V.~Chepyzhov and A.A.~Ilyin \cite{ChepyzhovIlyin2004}; N.V.~Kuznetsov and V.~Reitmann \cite{KuzReit2020}; S.~Zelik \cite{ZelikAttractors2022}). More special characteristics related to the study of fractal dimensions arise in the case of nonconformal repellers (see J.~Chen and Y.~Pesin \cite{ChenPesin2010}) or almost periodic attractors (see our works \cite{Anikushin2019Liouv} and \cite{AnikushinReitRom2019} (joint with V.~Reitmann)).

It was demonstrated by G.A.~Leonov that Lyapunov dimension can be effectively computed by introducing Lyapunov-like functions into the corresponding estimates (see his survey \cite{LeonovBoi1992} joint with V.A.~Boichenko). In particular, he applied the method to analytically compute Lyapunov dimension for invariant sets in H\'{e}non \cite{LeonovForHenonLor2001} and global attractors in Lorenz-like systems \cite{Leonov2016LorenzLike}. After his name, this approach was called the \textit{Leonov method} by N.V.~Kuznetsov \cite{Kuznetsov2016}. By now, an exact Lyapunov dimension formula for the global attractor of the Lorenz system is established for any standard parameters (see G.A.~Leonov et al. \cite{LeoKuzKorKusakin2016}; N.V.~Kuznetsov et al. \cite{KuzMokKuzKud2020}).

By the Kaplan-Yorke formula, one can express the Lyapunov dimension via uniform Lyapunov exponents. From this it is seen the important connection with the classical result of S.B.~Katok \cite{KatokSB1980} providing an upper estimate for topological entropy via the sum of positive uniform Lyapunov exponents. We refer to Section \ref{SEC: EntHistoricalBackground} for precise definitions and more discussions on the estimation of topological entropy.

From the above, it is not surprising that the Leonov method turned out to work well with the estimation of topological entropy. Recent works of A.S.~Matveev and A.Yu.~Pogromsky \cite{MatveevPogromsky2016} and the same authors joint with C.~Kawan \cite{KawanPogromsky2021} explore this area. Besides applications, \cite{KawanPogromsky2021} is concerned with the possibility of theoretical computation via general adapted metrics. Related results were also independently obtained by the first author in \cite{Anikushin2023LyapExp} for infinite-dimensional noninvertible systems (nuances are to be discussed in Section \ref{SEC: EntHistoricalBackground}).

On the level of geometry, the Leonov method is concerned with variations of a (usually space-constant) metric in its conformal class via Lyapunov-like functions. Recently, C.~Kawan, S.~Hafstein and P.~Giesl \cite{KawanHafsteinGiesl2021} and M.~Louzeiro et al. \cite{LouzeiroetAlAdaptedMetricsComp2022} developed subgradient methods resolving the related optimization problems in the class of polynomial Lyapunov-like functions and constant matrices defining the metrics. In these works, the method is based on the optimization of what is called the \textit{maximized exponent} in \cite{Anikushin2023LyapExp} (see our Section \ref{SEC: ULEandAdaptedMetrics}). In the case of \cite{KawanHafsteinGiesl2021,LouzeiroetAlAdaptedMetricsComp2022} it is given by the maximum of certain sums of logarithms of singular values over an invariant region. This is a nonsmooth optimization problem and for effective applications of the subgradient method it demands restricting to geodesically convex subsets in the space of metrics. The latter places essential restrictions on the considered classes of metrics. To the best of our knowledge, geodesic convexity has been established only for the simplest class of metrics coming from the Leonov method. Moreover, one should not expect convexity to be satisfied for general smooth families of metrics which one wishes to use in optimization.

Let us note that known successes of the Leonov method are related to the cases where the Lyapunov dimension (or consecutive sums of uniform Lyapunov exponents) can be realized as the analogous value over an ergodic measure concentrated at a stationary state. From this perspective, it is interesting that neither \cite{KawanHafsteinGiesl2021} nor \cite{LouzeiroetAlAdaptedMetricsComp2022} succeeded to obtain relevant estimates for the largest uniform Lyapunov exponent of the classical H\'{e}nnon attractor (containing only the positive equilibrium); both results significantly differ from the stationary asymptotics which seems to be the true one (this will be justified in Section \ref{SEC: HenonLPestimate}).

It is expected that in greater generality the uniform characteristics are achieved at periodic points (see Section \ref{SEC: RobustAlgoEstimateULE} for more detailed discussions). In the case of Lyapunov dimension, such a statement for particular systems is known as the \textit{Eden conjecture} which comes from the studies of A.~Eden \cite{EdenLocalEstimates1990, Eden1989Thesis}. In this work, we are aimed to develop analytical-numerical optimization machinery to provide a reliable way to justify the Eden conjecture for particular systems.

Namely, we develop a method based on the optimization of \textit{averaged exponents} in terms of \cite{Anikushin2023LyapExp} (see our Section \ref{SEC: ULEandAdaptedMetrics}). In other words, instead of maximizing certain exponents over a region containing (=localizing) a given compact invariant subset, we consider their time-averages over trajectories of the system, maximize such averages and take the limit as time goes to infinity. This limit can be effectively estimated using only short-time calculations and space-discretizations of the mapping.

For this, we cover the localizing region by small subsets and consider the associated graph (called by us a \textit{rough symbolic image}) describing overlaps (edges) of such subsets (vertices) under iterates of the mapping. Similar constructions were previously used in other works related to entropy (see, for example, G.~Froyland, O.~Junge and G.~Ochs \cite{FroylandJungeochs2001}; M.S.~Tomar, C.~Kawan and M.~Zamani \cite{TomarKawanZamani2022}; G.S.~Osipenko and N.B.~Ampilova \cite{OsipenkoAmpilova2019}) and rigorous computations in dynamics based on the Conley index theory (see the survey K.~Mischaikow \cite{Mischaikow2002}, where the related construction is called an outer approximation by a multivalued combinatorial map).
\begin{remark}
	Many natural connections between properties of rigorous symbolic images\footnote{We define a rough symbolic image (see Section \ref{SEC: RobustAlgoEstimateULE}) taking into account possibilities of computing intersections up to a given precision to make computations more reliable. If the intersections are computed exactly, we speak of a \textit{rigorous} symbolic image. In \cite{Osipenko2023, Osipenko2006} and related works of G.S.~Osipenko, the term \textit{symbolic image} is used to denote what we call rigorous symbolic images with equal transition times. However, many of his results are in the spirit of outer approximations and therefore they should be valid in the setting more appropriate for computations.} and dynamics of the corresponding mappings are established in works of G.S.~Osipenko (see \cite{Osipenko2023, Osipenko2006} and references therein).
	
	We also note that a more general construction called an \textit{approximate Markov chain}, where one may also take into account a finite past (so vertices of appropriate graphs correspond to a fixed-length sequences of covering subsets), is studied by S.M.~Pincus \cite{Pincus1992, Pincus1991ApEn}. This approach and the related notion of \textit{approximate entropy} turned out to be very successful in studying experimental data \cite{Pincus1991ApEn}. Although it is straightforward to generalize our constructions to this general case, accurate computations for applied deterministic models become much more complicated. Moreover, a similar effect can be achieved via refinements of paths in the graph through right-resolving presentations as in \cite{Osipenko2006, FroylandJungeochs2001}. \qed
\end{remark}

After this, we weight the vertices according to a given metric by maximizing the exponents only in the corresponding subset. Then elementary arguments show that the problem is reduced to maximizing relative weights over simple cycles in the graph (see Proposition \ref{PROP: GraphPeriodicMeasure}). This procedure (called the \textit{Relative Weights Optimization}) always results in values which are bounded from above by the maximized exponent. In particular, the above mentioned theoretical results of \cite{Anikushin2023LyapExp, MatveevPogromsky2016} give that one can always adapt the metric to approximate the consecutive sum of uniform Lyapunov exponents with a given accuracy for any partition and even when the graph is complete. However, here we are aimed to illustrate how relatively accurate computations of the dynamical graph may help to construct relevant metrics by adding a dynamical guidance to the optimization process.

By means of the Relative Weights Optimization, in Section \ref{SEC: OptimizatiobViaINP} we develop an optimization algorithm in general smooth families of metrics called by us the \textit{Iterative Nonlinear Programming}. It is concerned with iterative resolution of nonlinear programming problems on data (called \textit{reference cycles} and \textit{reference points}) extracted during optimization in the rough symbolic image. A key feature is that weights at these reference points are optimized not individually, but in the context of collected cycles which become highly overlapped during the optimization process. This adds a guidance by dynamics that helps achieving only relevant minima. In case of success, we also obtain a localization of an extreme trajectory in the original system. In other words, the algorithm results in an implicit approximation of the maximizing measure.

Let us also note that, unlike the Relative Weights Optimization which is a computationally robust procedure\footnote{Since the consecutive sum of singular values is globally Lipschitz in the space of matrices, the resulting computations can be potentially made rigorous in many cases.}, justifications of the Iterative Nonlinear Programming involve heuristics particularly related to chaotic dynamics which cannot be rigorously proven. However, we illustrate effectiveness of our approach by means of two systems.

Namely, in Section \ref{SEC: HenonLPestimate}, we justify that the largest uniform Lyapunov exponent of the classical H\'{e}non attractor is achieved at the positive equilibrium. This is done by constructing an adapted metric (see Tab.~\ref{TAB: HenonMetricValues}) providing an upper bound which is accurate up to 5 decimal places to the expected value. Here the Iterative Nonlinear Programming optimization shows a fast convergence.

In Section \ref{SEC: RabinovichLargestULE}, we study a more challenging problem represented by the Rabinovich system (a chaotic Lorenz-like system) with parameters for which it was previously numerically shown that neither the largest uniform Lyapunov exponent nor the Lyapunov dimension are achieved at any equilibrium and found a short periodic orbit which is a candidate for the extreme trajectory. Justifying this is a challenging problem. We construct an adapted metric using a neural network model with $2006$ parameters which results in an estimate accurate up to $3$ decimal places. In this case, the optimization process took 2 months on a personal computer (see the section for details).

Although in this paper we are interested in chaotic systems, it should be mentioned that our approach shall efficiently work for globally stable systems. Here rough symbolic images have simple structure and, consequently, the only problem is to construct them. In particular, by means of our method we can verify the global stability criterion based on dimension estimates (see M.Y.~Li and J.S.~Muldowney \cite{LiMuldowney1996SIAMGlobStab}; R.A.~Smith \cite{Smith1986HD}; our work \cite{AnikushinRomanov2023FreqConds} and Section \ref{SUBSEC: ComputationOfULE} for more discussions).

Let us briefly describe structure of the work. In Section \ref{SEC: EntHistoricalBackground}, we give preliminary definitions and provide a discussion on volume expansion and entropy for differentiable mappings on manifolds. In Section \ref{SEC: ULEandAdaptedMetrics}, we demonstrate how adapted metrics can be used to track volume expansions. In Section \ref{SEC: RobustAlgoEstimateULE}, we expose the Relative Weights Optimization algorithm and discuss related problems. In Section \ref{SEC: OptimizatiobViaINP}, the Iterative Nonlinear Programming optimization is discussed. In Section \ref{SEC: ComputationalTips}, related computational nuances are discussed. In Section \ref{SEC: HenonLPestimate}, we justify the Eden conjecture for the H\'{e}non mapping with classical parameters. In Section \ref{SEC: RabinovichLargestULE}, we justify the Eden conjecture for the Rabinovich system with particular parameters. In Appendix \ref{SEC: LyapunovFloquetMetrics}, we introduce Lyapunov-Floquet metrics and emphasize their role for optimization. In Appendix \ref{SEC: OperatorsOnExteriorProducts}, we briefly discuss operators on exterior powers and their singular values.
\section{Preliminary definitions and historical background}
\label{SEC: EntHistoricalBackground}

Let us start with some rigorous definitions concerning infinitesimal volumes, their evolution and measurements. Suppose $\vartheta$ is a differentiable mapping from an $n$-dimensional smooth manifold $\mathcal{M}$ to itself. Consider the \text{discrete forward-time space}\footnote{In the text, we prefer to use the abstract symbol $\mathbb{T}_{+}$ instead of just $\mathbb{Z}_{+}$ because most of definitions and results work also in the continuous-time case, i.e. $\mathbb{T}_{+} = \mathbb{R}_{+} = [0,+\infty)$.} $\mathbb{T}_{+} := \mathbb{Z}_{+} = \{0,1,2,\ldots\}$. For $t \in \mathbb{T}_{+}$, let $\Xi^{t}(q,\cdot)$ be the differential of $\vartheta^{t}$ at $q \in \mathcal{M}$ which is a linear mapping taking the tangent space $T_{q}\mathcal{M}$ at $q$ to the tangent space $T_{\vartheta^{t}(q)}\mathcal{M}$ at $\vartheta^{t}(q)$. This family will be denoted simply by $\Xi$. It forms a \textit{cocycle} on the tangent bundle $T\mathcal{M} = \bigcup_{q \in \mathcal{M}}T_{q}\mathcal{M}$ over the dynamical system $(\mathcal{M},\vartheta)$, i.e. for any $t,s \in \mathbb{T}_{+}$ and $q \in \mathcal{M}$ we have
\begin{equation}
	\Xi^{t+s}(q,\cdot) = \Xi^{t}(\vartheta^{s}(q),\Xi^{s}(q,\cdot))
\end{equation}
with $\Xi^{0}(q,\cdot)$ being the identity mapping of $T_{q}\mathcal{M}$.

For each $m \in \{1,\ldots,n\}$, we consider the $m$-th exterior power\footnote{We refer to Appendix \ref{SEC: OperatorsOnExteriorProducts} for a brief introduction to operators on exterior powers and singular values.} $T^{\wedge m}_{q}\mathcal{M}$ of $T_{q}\mathcal{M}$ (for $m=1$ this is just $T_{q}\mathcal{M}$ itself). Recall that it is spanned by finite linear combinations of antisymmetric tensors $\xi_{1} \wedge \ldots \wedge \xi_{m}$ with $\xi_{1},\ldots,\xi_{m} \in T_{q}\mathcal{M}$. Then there is a well-defined family of mappings given by
\begin{equation}
	\label{EQ: CompoundCocycleDef}
	\Xi^{t}_{m}(q, \xi_{1} \wedge \ldots \wedge \xi_{m} ) := \Xi^{t}(q,\xi_{1}) \wedge \ldots \wedge \Xi^{t}(q, \xi_{m})
\end{equation}
which also form a cocycle on the $m$-th exterior bundle $T^{\wedge m}\mathcal{M} = \bigcup_{q \in \mathcal{M}}T^{\wedge m}_{q}\mathcal{M}$ over $(\mathcal{M},\vartheta)$. We denote this family by $\Xi_{m}$ and call $\Xi_{m}$ the \textit{$m$-fold (antisymmetric) multiplicative compound} of $\Xi$.

Aimed to measure the growth of volumes under $\Xi_{m}$, we suppose $\mathcal{M}$ is endowed with a Riemannian metric, i.e. an inner product $\langle \cdot, \cdot \rangle_{q}$ in each tangent space $T_{q}\mathcal{M}$ smoothly depending on $q$. It induces a family $\mathfrak{n} = \{ \mathfrak{n}_{q}\}_{q \in \mathcal{M}}$ of norms on $T\mathcal{M}$, where $\mathfrak{n}_{q}(\xi) := \sqrt{\langle \xi, \xi \rangle_{q}}$ for $\xi \in T_{q}\mathcal{M}$ and $q \in \mathcal{M}$. We call $\mathfrak{n}$ a \textit{metric} on $T\mathcal{M}$.

Then on each $T^{\wedge m}_{q}\mathcal{M}$ there is the associated inner product (for convenience defined by the same symbol) $\langle \cdot, \cdot \rangle_{q}$ as in \eqref{EQ: InnerProductExteriorPower}. Analogously, we may consider the family of norms $\mathfrak{n}^{\wedge m} = \{ \mathfrak{n}^{\wedge m}_{q} \}_{q \in \mathcal{M}}$ generated by these inner products. We say that $\mathfrak{n}^{\wedge m}$ is the metric on $T^{\wedge m}\mathcal{M}$ associated with $\mathfrak{n}$.

Let $\mathcal{K} \subset \mathcal{M}$ be a compact subset which is invariant w.r.t. $\vartheta$, i.e. $\vartheta^{t}(\mathcal{K}) = \mathcal{K}$ for all $t \in \mathbb{T}_{+}$. As a motivation for this work, we are interested in upper estimates for the topological entropy $h_{\operatorname{top}}(\vartheta;\mathcal{K})$ of the dynamical system $(\mathcal{K},\vartheta)$. Here we omit definitions of $h_{\operatorname{top}}(\vartheta;\mathcal{K})$ since we will work only with quantities providing its upper estimates.

In the next subsection, we give an extended discussion on the existing approaches for estimation of topological entropy. Although most of it does not directly related to the present work, it explores difficulties in obtaining relevant estimates for topological entropy and therefore places the present study in the broader context.

\subsection{Topological entropy and volume expansion}

Suppose $\mu_{L}$ is the Lebesgue measure on $\mathcal{M}$ associated with the Riemannian metric and let $\mathcal{U}$ be a bounded set with nonempty interior. For $m \in \{1,\ldots,m\}$, we define the \textit{expansion exponent of $m$-volumes}\footnote{To the best of our knowledge, there is no name for the corresponding quantity in the literature.} in $\mathcal{U}$ as
\begin{equation}
	\label{EQ: MeanEntropyOfVolumes}
	\lambda^{vol}_{m}(\Xi;\mathcal{U}) \coloneq \limsup_{t \to +\infty} \frac{\log \int_{\mathcal{U}_{t}} \|\Xi^{t}_{m}(q,\cdot)\|_{\mathfrak{n}^{\wedge m}} d\mu_{L}(q) }{t},
\end{equation}
where $\mathcal{U}_{t} \coloneq \{ q \in \mathcal{U} \ | \ \vartheta^{s}(q) \in \mathcal{U} \text{ for all } 0 \leq s \leq t \}$.
\begin{remark}
	\label{REM: NormsBundleOperatorsConvention}
	Let us emphasize that $\|\Xi^{t}_{m}(q,\cdot)\|_{\mathfrak{n}^{\wedge m}}$ means the norm of $\Xi^{t}_{m}(q,\cdot)$ as an operator between normed spaces $(T^{\wedge m}_{q}\mathcal{M}, \mathfrak{n}^{\wedge m}_{q})$ and $(T^{\wedge m}_{\vartheta^{t}(q)}\mathcal{M}, \mathfrak{n}^{\wedge m}_{\vartheta^{t}(q)})$ respectively.
\end{remark}

Suppose that $\mathcal{U}$ (as above) contains $\mathcal{K}$ in its interior. Then for $\vartheta$ being a $C^{r}$-differentiable mapping with real $r > 1$, it is known that
\begin{equation}
	\label{EQ: TopologicalEntropyEstimate}
	h_{\operatorname{top}}(\vartheta;\mathcal{K}) \leq \max_{1 \leq m \leq n} \lambda^{vol}_{m}(\Xi;\mathcal{U}).
\end{equation}
For diffeomorphisms on compact manifolds with $\mathcal{K}=\mathcal{M}$ and $\mathcal{U}=\mathcal{M}$, this was established by F.~Przytycki \cite{Przytycki1980}. In \cite{Newhouse1988EntropyVol}, S.~Newhouse refined the result as we stated it (see the remark on p.~296 therein). Note that the requirement $r > 1$ is related to the use of nonuniform partial hyperbolicity (Pesin theory) in the proofs, where the additional smoothness is required to construct the corresponding foliations and linearize the mapping near supports of ergodic measures. In fact, the proof shows that \eqref{EQ: TopologicalEntropyEstimate} holds when $\limsup$ is exchanged with $\liminf$ in the definition \eqref{EQ: MeanEntropyOfVolumes}.

For $C^{\infty}$-differentiable mappings on a compact manifold $\mathcal{M}$ we have
\begin{equation}
	\label{EQ: EntropyViaVolumesEquality}
	h_{\operatorname{top}}(\vartheta;\mathcal{M}) = \max_{1 \leq m \leq n} \lambda^{vol}_{m}(\Xi;\mathcal{M}).
\end{equation}
Here, the estimate from below is essentially due to Y.~Yomdin \cite{Yomdin1987}, who studied the expansion of nonlocal volumes (curves, surfaces, etc.) embedded into the manifold in connection with the Shub entropy conjecture. In the form \eqref{EQ: EntropyViaVolumesEquality}, the identity appeared in the work of O.S.~Kozlovski \cite{Kozlovski1998} who systematized previous investigations. For lower regularity $r < \infty$, there indeed may exist gaps between $h_{\operatorname{top}}(\vartheta;\mathcal{M})$ and the right-hand side of \eqref{EQ: EntropyViaVolumesEquality} which can be estimated from above in terms of the largest uniform Lyapunov exponent $\lambda_{1}(\Xi;\mathcal{M})$ (see \eqref{EQ: UniformLyapunovExponentsDefinition}), $n = \dim \mathcal{M}$ and $r$ (see \cite{Yomdin1987}).

Moreover, in the above context of $C^{\infty}$-differentiable mappings, the topological entropy is upper-semicontinuous w.r.t. $\vartheta$ in the $C^{\infty}$-topology and there exists a measure of maximal entropy (see S.~Newhouse \cite{Newhouse1989ContEnt}). However, in lower regularity all of these properties may be violated. %Theoretically, this indicates that one may try to obtain effective relevant upper bounds for such a quantity and this should be also related to the formula \eqref{EQ: EntropyViaVolumesEquality}.

A direct approach related to \eqref{EQ: TopologicalEntropyEstimate} is presented by S.~Newhouse and T.~Pignataro in \cite{NewhousePignataro1993}, where it is numerically studied the evolution of a typical\footnote{It must be transverse to stable manifolds of the measure of maximal entropy.} nonlocal $m$-volume embedded into $\mathcal{M}$. Analogously, in \cite{Kozlovski1998}, O.S.~Kozlovski showed that $\lambda^{vol}_{m}(\Xi;\mathcal{M})$ can be realized as the growth exponent in the $L_{1}$-norm of a typical field of $m$-vectors or $m$-covectors (a differential form) on $\mathcal{M}$ under the action of $\Xi$. At least in the case of model examples, the approach of \cite{NewhousePignataro1993} shows a relatively fast convergence which is much more regular than in the case of Lyapunov exponents. %Such approaches to investigate $\lambda^{vol}_{m}(\Xi;\mathcal{M})$ which are directly based on following the evolution require long-time computations on exponentially large (in time) grids of points over the initial volume and therefore their applications are limited. 

There also exist combinatorial approaches for estimation of topological entropy. For arbitrary finite partitions, the method developed in the works G.~Froyland, O.~Junge and G.~Ochs \cite{FroylandJungeochs2001} and G.~Osipenko \cite{Osipenko2006} provides computation of the combinatorial entropy w.r.t. the partition. Being applied sufficiently accurate, the method leads to lower estimates for the topological entropy. Theoretically one may have an upper estimate, but this requires either the use of arbitrarily small partitions (that is impossible) or at least proving that a given partition is generating. 

Constructing partitions providing dynamically relevant grammars is a challenge task and besides the kneading theory for one-dimensional maps, the Pruning Front Conjecture for two-dimensional systems is promising on this way (see G.~D'Alessandro et al. \cite{Alessandroelal1990}; P.~Cvitanovi\'{c} et al. \cite{CvitanovicChaos2005}). Recently, some rigorous results for H\'{e}non mappings were claimed by J.P.~Boro\'{n}ski and S.~\v{S}timac \cite{BoronskiStimac2023}.

In this paper, we deal with a uniform analog of $\lambda^{vol}_{m}(\Xi;\mathcal{M})$ which is upper semicontinuous in the $C^{1}$-topology and will be defined in the next subsection.

\subsection{Uniform Lyapunov exponents and their computation}
\label{SUBSEC: ComputationOfULE}
Let $\mathcal{Q} \subset \mathcal{M}$ be a positively invariant closed bounded set. Then the \textit{uniform Lyapunov exponents} $\lambda_{1}(\Xi;\mathcal{Q}), \lambda_{2}(\Xi;\mathcal{Q}), \ldots$ of $\Xi$ over $\mathcal{Q}$ are given by induction for $m \in \{1,\ldots, n\}$ from the relations
\begin{equation}
	\label{EQ: UniformLyapunovExponentsDefinition}
	\lambda_{1}(\Xi;\mathcal{Q}) + \ldots + \lambda_{m}(\Xi;\mathcal{Q}) = \lambda_{1}(\Xi_{m};\mathcal{Q}) \coloneq \lim_{t \to +\infty} \frac{\ln \sup_{q \in \mathcal{Q}}\|\Xi^{t}_{m}(q,\cdot)\|_{\mathfrak{n}^{\wedge m}}}{t}.
\end{equation}
Note that $\lambda_{j}$ need not be nonincreasing in $j$ (see \cite{Anikushin2023LyapExp}). It is clear that $\lambda^{vol}_{m}(\Xi;\mathcal{Q}) \leq \lambda_{1}(\Xi_{m};\mathcal{Q})$, although the gap between these quantities, besides trivial cases, is always significant.

It is an immediate corollary of the Margulis-Ruelle inequality and the variational principle for the topological entropy that for any $C^{1}$-differentiable mapping we have
\begin{equation}
	\label{EQ: TopologicalEntropyViaUnExps}
	h_{\operatorname{top}}(\vartheta;\mathcal{K}) \leq \sum_{j \colon \lambda_{j} > 0} \lambda_{j}(\Xi;\mathcal{K}) = \max_{1 \leq m \leq n} \lambda_{1}(\Xi_{m};\mathcal{K}).
\end{equation}
for any invariant compact $\mathcal{K}$. However, first proofs of \eqref{EQ: TopologicalEntropyViaUnExps} utilized more direct approaches related to expansion of volumes (see S.B.~Katok \cite{KatokSB1980}), which were developed later to obtain the more delicate estimate \eqref{EQ: TopologicalEntropyEstimate}.

If $\mathcal{K} = \bigcap_{s \in \mathbb{T}_{+}} \vartheta^{s}(\mathcal{Q})$ for $\mathcal{Q}$ as above, then $\lambda_{j}(\Xi;\mathcal{Q}) = \lambda_{j}(\Xi;\mathcal{K})$ for any $j \geq 0$. It is clear then that \eqref{EQ: TopologicalEntropyViaUnExps} is rougher than \eqref{EQ: TopologicalEntropyEstimate}. However, there are two advantages of the estimate \eqref{EQ: TopologicalEntropyViaUnExps} via uniform quantities $\lambda_{1}(\Xi_{m};\mathcal{K})$.

Firstly, for each $m$, the value $\lambda_{1}(\Xi_{m};\mathcal{K})$ depends upper semicontinuously under $C^{1}$-perturbations of $\vartheta$ which are uniformly small in a neighborhood of $\mathcal{K}$ and preserve the positive invariance of $\mathcal{Q}$ (see Section A.2 in \cite{Anikushin2023LyapExp}). 

Secondly, uniform Lyapunov exponents theoretically admit computation via adapted metrics. This was established for linear cocycles in Hilbert spaces in \cite{Anikushin2023LyapExp} by the first author in connection with the Liouville trace formula, symmetrization procedure and general metrics\footnote{Not necessarily coercive in the case of continuous-time (locally) noninvertible systems. This feature is natural in infinite dimensions.} given by families of norms. In particular, there always exists a metric $\mathfrak{m}$ on $T^{\wedge m}\mathcal{M}$ over $\mathcal{Q}$ such that the maximum of $\alpha^{+}_{\mathfrak{m}}(q) \coloneq \ln \| \Xi^{1}(q,\cdot) \|_{\mathfrak{m}}$ over $q \in \mathcal{Q}$ (see Section \ref{SEC: ULEandAdaptedMetrics}) produces an upper estimate for $\lambda_{1}(\Xi_{m};\mathcal{Q})$ which is close to $\lambda_{1}(\Xi_{m})$ up to a given precision. Note that $\mathfrak{m}$ is constructed as a Lyapunov-like metric in \cite{Anikushin2023LyapExp} and need not be associated with a metric on $T\mathcal{M}$.

For finite-dimensional invertible systems, analogous characterization was obtained by C.~Kawan, A.S.~Matveev and A.Yu.~Pogromsky \cite{KawanPogromsky2021} in connection with the so-called restoration entropy given by the right-hand side of \eqref{EQ: TopologicalEntropyViaUnExps}. Namely, following geometric ideas of J.~Bochi \cite{Bochi2018}, the authors showed that $\mathfrak{m}$ as above can be taken to be associated with a metric on $T\mathcal{M}$ (and, in fact, the metric can be chosen the same for all $m$). Such a metric is constructed by taking the barycenter (in the space of metrics) of pullbacks of a given Riemannian metric. 

%We also refer to Section \ref{SEC: ULEandAdaptedMetrics} for a similar computational problem related to $\lambda^{vol}_{m}(\Xi;\mathcal{M})$.

Nevertheless, the consideration of metrics defined directly on exterior bundles has advantages for practice. For example, such metrics arise from applications of the Frequency Theorem which produces fields of Lyapunov-like quadratic functionals via an infinite-horizon quadratic optimization procedure. Although the theorem, establishing a parallel between frequency-domain and the Lyapunov direct methods, is well-known for its success in studying nonlinear ODEs (see, e.g., the monographs \cite{LeonovBurkinShep2012, LeonovPonomarenkoSmirnova1996, KuzReit2020}), it lacked several important applications until recent works of the first author related to appropriate extensions of the theorem in infinite dimensions (see \cite{Anikushin2020FreqDelay, Anikushin2020FreqParab}), connections with inertial manifolds theory (see \cite{Anikushin2020Geom, Anikushin2022Semigroups, AnikushinAADyn2021, AnikushinRom2023SS}) and studying spectra of compound cocycles like our $\Xi_{m}$ (see \cite{Anikushin2023Comp, AnikushinRomanov2023FreqConds, AnikushinRomanov2024EffEst}).

Moreover, in connection with the present work, considering smooth families of metrics defined directly on $T^{\wedge m}\mathcal{M}$ may add a flexibility for applications, however at the cost of computational complexity, both in time and memory, since we have to deal with larger matrices.

There is a particular interest in proving that $\lambda_{1}(\Xi_{2}; \mathcal{Q}) < 0$. For a simply connected $\mathcal{Q}$ localizing an attractor $\mathcal{K}$, this allows to apply the generalized Bendixson criterion for attractors (R.A.~Smith \cite{Smith1986HD}; M.Y.~Li and J.S.~Muldowney \cite{LiMuldowney1996SIAMGlobStab, LiMuldowney1995LowBounds}). Since $\lambda_{1}(\Xi_{2}; \mathcal{Q}) < 0$ is preserved under small $C^{1}$-perturbations, this allows to deduce (at least in finite dimensions) the global stability in $\mathcal{Q}$ (i.e. the convergence of any trajectory to an equilibrium). We refer to \cite{AnikushinRomanov2023FreqConds} for more discussions.

%In particular, the Frequency Theorem can be applied directly to exterior systems providing upper estimates for $\lambda_{1}(\Xi_{m};\mathcal{Q})$. In \cite{Anikushin2023Comp}, an appropriate theory is developed allowing such applications for delay equations in $\mathbb{R}^{n}$. In \cite{AnikushinRomanov2023FreqConds}, the authors developed numerical schemes to verify arising frequency conditions and gave applications to the Suarez-Schopf and Mackey-Glass models (see also \cite{AnikushinRomanov2024EffEst} for applications to the Nicholson blowflies model). A particular interest for estimating $\lambda_{1}(\Xi_{m}; \mathcal{Q})$ in these works is in proving that $\lambda_{1}(\Xi_{2}; \mathcal{Q}) < 0$. For a simply connected $\mathcal{Q}$ localizing an attractor $\mathcal{K}$, this allows to apply the generalized Bendixson criterion for attractors (R.A.~Smith \cite{Smith1986HD}; M.Y.~Li and J.S.~Muldowney \cite{LiMuldowney1995LowBounds}). Since $\lambda_{1}(\Xi_{2}; \mathcal{Q}) < 0$ is preserved under $C^{1}$-perturbations (see above), it is also expected (as in finite dimensions; see M.Y.~Li and J.S.~Muldowney \cite{LiMuldowney1996SIAMGlobStab}) to imply global stability (i.e. the convergence of any trajectory to an equilibrium). We refer to \cite{AnikushinRomanov2023FreqConds} for more discussions.
\section{Tracking volume expansion via adapted metrics}
\label{SEC: ULEandAdaptedMetrics}
Here we will demonstrate how adapted metrics may simplify tracking the expansion of $m$-volumes. A price to pay for such a simplification is upper estimates which may be sufficiently rough for a given metric. However, this approach has the prospect of searching for more relevant metrics catching proper asymptotics via the estimates.

Choose a bounded subset $\mathcal{U} \subset \mathcal{M}$ with nonempty interior and let $\mathfrak{m} = \{\mathfrak{m}_{q}\}_{q \in \mathcal{U}_{1}}$ be a metric on $T^{\wedge m}\mathcal{Q}$ over $\mathcal{U}_{1} \coloneq \mathcal{U} \cap \vartheta^{-1}(\mathcal{U})$ which is uniformly equivalent to $\mathfrak{n}^{\wedge m}$, i.e. for some constants $C_{1},C_{2} > 0$ we have
\begin{equation}
	\label{EQ: EquivalentMetricOnVolumes}
	C_{1} \mathfrak{m}_{q}(\mathcal{P}) \leq \mathfrak{n}^{\wedge m}_{q}(\mathcal{P}) \leq C_{2} \mathfrak{m}_{q}(\mathcal{P}) \text{ for any } q \in \mathcal{U}_{1} \text{ and } \mathcal{P} \in T^{\wedge m}_{q}\mathcal{M}.
\end{equation}

We define the \textit{growth exponent} $\alpha_{\mathfrak{m}}(q,\mathcal{P})$ of $\Xi_{m}$ at nonzero $\mathcal{P} \in T^{\wedge m}_{q}\mathcal{Q}$ over $q \in \mathcal{U}_{1}$ w.r.t. $\mathfrak{m}$ as (with the convention $\ln 0 := -\infty$)
\begin{equation}
	\alpha_{\mathfrak{m}}(q,\mathcal{P}) \coloneq \ln \frac{\mathfrak{m}_{\vartheta(q)}\left(\Xi^{1}_{m}(q,\mathcal{P})\right)}{\mathfrak{m}_{q}(\mathcal{P})}.
\end{equation}
Clearly, for any $t \in \mathbb{T}_{+}$ we have (with the convention $e^{-\infty} := 0$)
\begin{equation}
	\label{EQ: LiouvilleFormulaDiscrete}
	\mathfrak{m}_{\vartheta^{t}(q)}\left(\Xi^{t}_{m}(q,\mathcal{P}) \right) = \mathfrak{m}_{q}(\mathcal{P}) \exp\left( \sum_{s=0}^{t-1}\alpha_{\mathfrak{m}}(\vartheta^{s}(q),\Xi^{s}_{m}(q,\mathcal{P})) \right)
\end{equation}
provided that $\Xi^{t-1}_{m}(q,\mathcal{P}) \not=0$. 

Note that \eqref{EQ: LiouvilleFormulaDiscrete} is a discrete analog of the Liouville trace formula. For continuous-time systems, one has to differentiate in order to obtain an analog $\alpha_{\mathfrak{m}}(q,\mathcal{P})$ (therefore they are called \textit{infinitesimal growth exponents}) so certain differentiability properties of $\Xi_{m}$ w.r.t. $\mathfrak{m}$ are required (see \cite{Anikushin2023LyapExp}). In this work, to study continuous-time systems we use their finite-time discretizations and work only with growth exponents of the corresponding mappings.

Using the \textit{maximization procedure} in each fiber $q \in \mathcal{U}_{1}$, we get
\begin{equation}
	\label{EQ: AlphaPlusFiberDef}
	\alpha^{+}_{\mathfrak{m}}(q) \coloneq \sup_{\mathcal{P} \not= 0} \alpha_{\mathfrak{m}}(q,\mathcal{P}) = \ln\| \Xi^{1}(q,\cdot) \|_{\mathfrak{m}}.
\end{equation}
\begin{remark}
	From \eqref{EQ: KeyResultExteriorOperators}, it follows that for $\mathfrak{m} = \mathfrak{n}^{\wedge m}$, i.e. metrics associated with a metric $\mathfrak{n}$ on $T\mathcal{M}$ over $\mathcal{U}_{1}$, we have (see \cite{Anikushin2023LyapExp})
	\begin{equation}
		\label{EQ: MaximizedExponentDef}
		\alpha^{+}_{\mathfrak{m}}(q) = \sup_{\xi_{1},\ldots,\xi_{m}} \alpha_{\mathfrak{m}}(q,\xi_{1} \wedge \ldots \wedge \xi_{m}),
	\end{equation}
	where the supremum is taken over all linearly independent vectors $\xi_{1},\ldots,\xi_{m} \in T_{q}\mathcal{M}$.
\end{remark}

Consequently, for $q \in \mathcal{U}_{t} = \bigcap_{s=0}^{t} \vartheta^{-s}(\mathcal{U})$, \eqref{EQ: LiouvilleFormulaDiscrete} gives
\begin{equation}
	\label{EQ: AdaptedMetricNormEstimate}
	\|\Xi^{t}_{m}(q,\cdot)\|_{\mathfrak{m}} \leq \exp\left( \sum_{s=0}^{t-1}\alpha^{+}_{\mathfrak{m}}(\vartheta^{s}(q)) \right).
\end{equation}

If $\mathcal{U} = \mathcal{Q}$ is a positively invariant subset, this gives rise to two kinds of exponents estimating $\lambda_{1}(\Xi_{m};\mathcal{Q})$. Namely, (the limit exists due to the Fekete lemma)
\begin{equation}
	\alpha^{+}_{\mathfrak{m}}(\Xi;\mathcal{Q}) \coloneq \sup_{q \in \mathcal{Q}} \alpha^{+}_{\mathfrak{m}}(q) \text{ and } \overline{\alpha}_{\mathfrak{m}}(\Xi;\mathcal{Q}) \coloneq \lim_{t \to +\infty} \frac{1}{t}\sup_{q \in \mathcal{Q}} \sum_{s=0}^{t-1}\alpha^{+}_{\mathfrak{m}}(\vartheta^{s}(q)).
\end{equation}
Here $\alpha^{+}_{\mathfrak{m}}(\Xi;\mathcal{Q})$ is the \textit{uniform growth exponent} and $\overline{\alpha}_{\mathfrak{m}}(\Xi;\mathcal{Q})$ is the \textit{averaged uniform growth exponent} of $\Xi_{m}$ w.r.t. $\mathfrak{m}$ over $\mathcal{Q}$. Clearly, 
\begin{equation}
	\lambda_{1}(\Xi_{m};\mathcal{Q}) \leq \overline{\alpha}_{\mathfrak{m}}(\Xi;\mathcal{Q}) \leq \alpha^{+}_{\mathfrak{m}}(\Xi;\mathcal{Q}).
\end{equation}
As it was discussed in Section \ref{SEC: EntHistoricalBackground}, the results of \cite{Anikushin2023LyapExp,KawanPogromsky2021} guarantee that there exist a sequence of $\mathfrak{m}$ making $\alpha^{+}_{\mathfrak{m}}(\Xi;\mathcal{Q})$ arbitrarily close to $\lambda_{1}(\Xi_{m};\mathcal{Q})$. 

Uniform growth exponents are convenient to obtain rigorous analytical estimates (see \cite{Anikushin2023LyapExp, KuzReit2020, ZelikAttractors2022}). In this work, we explore advantages of averaged uniform growth exponents $\overline{\alpha}_{\mathfrak{m}}(\Xi;\mathcal{Q})$ for analytical-numerical estimation of $\lambda_{1}(\Xi_{m};\mathcal{Q})$. Moreover, in Section \ref{SEC: RobustAlgoEstimateULE} we will consider a more general situation allowing different transition times (not just unit ones) depending on $q$.

\section{Relative weights optimization for estimation of uniform growth exponents of subadditive families}
\label{SEC: RobustAlgoEstimateULE}

In this section, on the theoretical level, it will be sufficient to work with a continuous self-map $\vartheta$ of a complete metric space $\mathcal{Q}$. Recall $\mathbb{T}_{+} = \mathbb{Z}_{\geq 0}$.

Consider a family $f=\{ f^{t} \}_{t \in \mathbb{T}_{+}}$ of functions $f^{t} \colon \mathcal{Q} \to \mathbb{R} \cup \{-\infty\}$ satisfying
\begin{description}[before=\let\makelabel\descriptionlabel]
	\item[\textbf{(SF1)}\refstepcounter{desccount}\label{DESC: SF1}] $f^{t+s}(q) \leq f^{t}(\vartheta^{s}(q)) + f^{s}(q) \text{ for any } t,s \in \mathbb{T}_{+} \text{ and } q \in \mathcal{Q}$;
	\item[\textbf{(SF2)}\refstepcounter{desccount}\label{DESC: SF2}] $f^{t} \colon \mathcal{Q} \to \mathbb{R} \cup \{-\infty\}$ is upper-semicontinuous for any $t \in \mathbb{T}_{+}$;
	\item[\textbf{(SF3)}\refstepcounter{desccount}\label{DESC: SF3}] $\sup_{t \in [0,1] \cap \mathbb{T}_{+}} \sup_{q \in \mathcal{Q}} f^{t}(q) < +\infty$.
\end{description}
Under these conditions we say that $f$ is a \textit{proper subadditive family} over $(\mathcal{Q},\vartheta)$. Then the \textit{uniform (upper) growth exponent} $\lambda_{f}$ of $f$ is defined as
\begin{equation}
	\label{EQ: UniformUpperGrowthExponentF}
	\lambda_{f} \coloneq \lim_{t \to +\infty} \frac{1}{t}\sup_{q \in \mathcal{Q}}f^{t}(q),
\end{equation}
where the limit exists according to the Fekete lemma.

\begin{remark}
	In terms of the previous section, for an invariant compact $\mathcal{K}$, our basic aim is to estimate the sum of the first $m$ uniform Lyapunov exponents of $\Xi$ over $(\mathcal{K},\vartheta)$, i.e.
	\begin{equation}
		\label{EQ: UniformLyapunovExponents}
		\lambda_{1}(\Xi;\mathcal{K}) + \ldots + \lambda_{m}(\Xi;\mathcal{K}) = \lambda_{1}(\Xi_{m};\mathcal{K}) = \lim_{t \to +\infty} \frac{\ln \sup_{q \in \mathcal{K}}\| \Xi^{t}_{m}(q,\cdot) \|_{\mathfrak{n}^{\wedge m}} }{t}.
	\end{equation}
	Let $\mathfrak{m}$ be any (continuous) metric on $T^{\wedge m}\mathcal{M}$ over $\mathcal{K}$. Clearly, the limit \eqref{EQ: UniformLyapunovExponents} remains the same if we exchange $\mathfrak{n}^{\wedge m}$ with $\mathfrak{m}$. Then clearly, the problem is to estimate $\lambda_{f}$ for the proper subadditive family $f=\{f^{t}\}_{t \in \mathbb{T}_{+}}$ given by
	\begin{equation}
		\label{EQ: SubadditiveFamilyForULEm}
		f^{t}(q) \coloneq \ln \| \Xi^{t}_{m}(q,\cdot) \|_{\mathfrak{m}} \text{ for } t \in \mathbb{T}_{+} \text{ and } q \in \mathcal{K}.
	\end{equation}
	
	Moreover, let $\mathfrak{n}$ be as above and consider the function of singular values $\omega^{(\mathfrak{n})}_{d}(\Xi^{t}(q,\cdot))$ of $\Xi^{t}(q,\cdot)$ in the metric $\mathfrak{n}$ (see \eqref{EQ: SingularValuesFunctionDef} and Remark \ref{REM: NormsBundleOperatorsConvention}). It is not hard to see that 
	\begin{equation}
		\label{EQ: SubadditiveFamilySingVal}
		f^{t}(q) = \ln \omega^{(\mathfrak{n})}_{d}(\Xi^{t}(q,\cdot)) \text{ for } t \in \mathbb{T}_{+} \text{ and } q \in \mathcal{K}
	\end{equation}
	is a proper subadditive family. For $d=m$ and $\mathfrak{m} = \mathfrak{n}^{\wedge m}$ it coincides with \eqref{EQ: SubadditiveFamilyForULEm} according to \eqref{EQ: KeyResultExteriorOperators}. Note also that in this case $\lambda_{f} \eqcolon \ln \overline{\omega}_{d}(\Xi;\mathcal{K})$ depends only on $d \geq 0$ and does not depend on the metric. Then the (uniform) Lyapunov dimension $\dim_{\operatorname{L}}(\Xi;\mathcal{K})$ of $\Xi$ is defined as the infimum over $d \geq 0$ such that $\overline{\omega}_{d}(\Xi;\mathcal{K}) < 1$. Suppose that $\mathcal{K}$ is an invariant compact for $\vartheta$ and $\Xi$ is the cocycle of differentials of $\vartheta$ over $(\mathcal{K},\vartheta)$. Then (see \cite{ChepyzhovIlyin2004})
	\begin{equation}
		\label{EQ: FractalDimEstViaLD}
		\dim_{\operatorname{F}} \mathcal{K} \leq \dim_{\operatorname{L}}(\Xi;\mathcal{K}),
	\end{equation}
	where $\dim_{\operatorname{F}} \mathcal{K}$ is the fractal (=upper box-counting) dimension of $\mathcal{K}$.
	
	Note that in all of the above situations, we in fact deal with parameterized (by a metric) subadditive families $f$ for which $\lambda_{f}$ does not depend on the parameter. However, our geometric methods to estimate $\lambda_{f}$ do depend on the parameter and the above discussed results of \cite{KawanPogromsky2021,Anikushin2023LyapExp} guaranteeing the existence of metrics providing arbitrarily close estimates justify the approach.
	\qed
\end{remark}

In \cite{Morris2013}, I.D.~Morris established the existence of \textit{extreme points} $q \in \mathcal{Q}$ for which $\lambda_{f}$ can be realized as the limit of $f^{t}(q)/t$ as $t \to +\infty$. Such points can always be taken as recurrent and as typical points over a certain ergodic measure. Note also that \cite{Morris2013} deals with the case of compact $\mathcal{Q}$. However, if there is a compact $\mathcal{K} \subset \mathcal{Q}$ which attracts $\mathcal{Q}$, the result still holds since it is not hard to see that exchanging $\mathcal{Q}$ with $\mathcal{K}$ in \eqref{EQ: UniformUpperGrowthExponentF} does not change the limit. This circumstance is important in infinite dimensions (see \cite{Anikushin2023LyapExp}). Moreover, this justifies computations for attractors in practice, where we work in a localizing neighborhood $\mathcal{U}$ of $\mathcal{K}$ rather than $\mathcal{K}$ itself.

In the topological context (i.e. without appealing to ergodic measures) and for particular subadditive families concerned with Lyapunov dimension related results were previously established by A.~Eden \cite{EdenLocalEstimates1990, Eden1989Thesis}. In his thesis \cite{Eden1989Thesis}, he also formulated a problem concerned with establishing whether there exist extreme points which are stationary or periodic ones. Since then, the existence of such extreme points for particular systems is known as the \textit{Eden conjecture}. In the case of Birkhoff averages (i.e. additive families) a similar conjecture in the ergodic optimization (see the survey of J.~Bochi \cite{Bochi2018}) is attributed to the experimental work of B.R.~Hunt and E.~Ott \cite{HuntOtt1996} who also justified generality of such a behavior. Moreover, the authors made the key observation that extreme periodic orbits must be relatively short (i.e. have a small period). Our main heuristic is also related to this circumstance.

So, there is a \textit{Periodicity Conjecture} stating that generically there exist periodic extreme points of small periods. In the remained part of our work, we develop a machinery for justifying this conjecture for particular systems via short-time computations at least in the case of subadditive families given by \eqref{EQ: SubadditiveFamilySingVal}.

Let $\mathcal{K}$ be a compact invariant subset of $(\mathcal{Q},\vartheta)$. Consider a covering of $\mathcal{K}$ by a finite number of sets $\mathcal{U}^{1}, \ldots, \mathcal{U}^{N}$ so
\begin{equation}
	\label{EQ: CoveringInvariantSet}
	\mathcal{U} := \bigcup_{i=1}^{N} \mathcal{U}^{i} \supset \mathcal{K}.
\end{equation}

With such a covering we associate a directed graph $G = \{ \mathcal{V}, \mathcal{E} \}$ with vertices $\mathcal{V} = \{1,\ldots,N\}$ and edges $\mathcal{E} \subset \mathcal{V} \times \mathcal{V}$ constructed as follows.

We fix positive integers (called \textit{transition times}) $\tau_{j}$, where $j \in \mathcal{V}$. Then we suppose that for any $i, j \in \mathcal{V}$ with $\vartheta^{\tau_{i}}(\mathcal{U}^{i}) \cap \mathcal{U}^{j} \not= \emptyset$ there is the corresponding edge $(i,j) \in \mathcal{E}$. We call any such $G$ a \textit{rough symbolic image} associated with the dynamical system $(\mathcal{K},\vartheta)$, the covering $\mathcal{U}$ and the transition times $\tau = \{\tau_{i}\}^{N}_{i=1}$.

If the above edges are the only edges in $G$, then $G$ is called a \textit{rigorous symbolic image}. However, for purposes of robust computations, $G$ need not to contain only such edges and there may be other edges. One may think that they correspond to intersections up to some precision, namely, $(i,j) \in \mathcal{E}$ if a small neighborhood of $\vartheta^{\tau_{i}}(\mathcal{U}^{i})$ intersects with $\mathcal{U}^{j}$. 

\begin{remark}
	There is an obvious choice of transition times as $\tau_{i}=1$. However, there may be regions with slow dynamics (e.g., near equilibria) which lead to undesirable self-loops or long paths (along unstable manifolds) in the graph. To overcome this, it is useful to choose larger transition times for the corresponding covering subsets. 
	
	Moreover, in the case of continuous-time systems (here transition times can be also taken as positive reals), we are additionally allowed to slow down in regions with fast dynamics by choosing $\vartheta$ to be a sufficiently small mapping of the semiflow. Without any slowing in such regions covering subsets may largely spread around the phase space (due to stretching) also leading to highly irrelevant paths and cycles. Theoretically, one may take covering subsets smaller to minimize such effects, although the allowed smallness is limited by practical feasibility. 
	
	Note also that for particular metrics, taking transition times smaller usually leads to rougher estimates (due to discrepancy in the estimate from \eqref{EQ: DGLULEestimatePoint}). So, one should find an appropriate for the problem balance between choosing appropriate transition times and smallness of covering subsets.
	
	However, the consideration of distinct transition times multiplies the time complexity of optimization algorithms (see below).
	\qed
\end{remark}
\begin{remark}
	There are algorithms for constructing symbolic images contained in the GAIO package (see M.~Dellnitz O.~Junge \cite{DellnitzJunge2002}). Some are also discussed in the monograph of G.S.~Osipenko \cite{Osipenko2006}.
\end{remark}

For what follows, we fix a proper subadditive over $(\mathcal{K},\vartheta)$ family $f = \{f^{t}\}_{t \in \mathbb{T}_{+}}$. For any $i \in \mathcal{V}$, we suppose that $f^{\tau_{i}}$ is also defined and bounded on $\mathcal{U}^{i}$ and introduce the values
\begin{equation}
	\label{EQ: AlphaWeightGraphDefinition}
	\alpha^{+}_{f}(i) \coloneq \max_{q \in \mathcal{U}^{i}} f^{\tau_{i}}(q)	
\end{equation}
For example, if $f$ extends to a proper subadditive family over $(\mathcal{Q},\vartheta)$, then $\alpha^{+}_{f}(i) < +\infty$ is guaranteed by \nameref{DESC: SF3} and finiteness of the covering.

We start with the following proposition.
\begin{proposition}
	\label{PROP: ULEestimatePreparatory}
	For any $q \in \mathcal{K}$ choose a sequence\footnote{Clearly, at least one such a sequence always exists due to the invariance of $\mathcal{K}$.} $i_{0}=i_{0}(q), i_{1}=i_{1}(q),i_{2}=i_{2}(q),\ldots$ of vertices in $G$ such that
	\begin{equation}
		q \in \mathcal{U}^{i_{0}} \text{ and } \vartheta^{\tau_{i_{k}}}(q) \in \mathcal{U}^{i_{k+1}} \text{ for any } k=0, 1, \ldots.
	\end{equation}
	Then
	\begin{equation}
		\label{EQ: UniformLEestimateViaGraphWeights}
		\lambda_{f} \leq \limsup_{K \to +\infty} \sup_{q \in \mathcal{K}}\frac{\sum_{k=0}^{K}\alpha^{+}_{f}(i_{k}(q))}{ \sum_{k=0}^{K} \tau_{i_{k}(q)}}.
	\end{equation}
\end{proposition}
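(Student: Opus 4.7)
The plan is to iteratively apply the subadditivity property \nameref{DESC: SF1} along the sequence of itinerary vertices to obtain a clean bound on $f^{t}(q)$ in terms of the weights $\alpha^{+}_{f}(i_{k}(q))$, and then compare time scales carefully to conclude.

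First, for fixed $q\in\mathcal{K}$, I set the cumulative transition times $T_{K}(q) := \sum_{k=0}^{K-1}\tau_{i_{k}(q)}$ and $T_{0}(q) = 0$. By the construction of the itinerary, $\vartheta^{T_{k}(q)}(q) \in \mathcal{U}^{i_{k}(q)}$ for every $k \geq 0$. Iterating \nameref{DESC: SF1} along this itinerary yields
\begin{equation*}
	f^{T_{K}(q)}(q) \;\leq\; \sum_{k=0}^{K-1} f^{\tau_{i_{k}(q)}}\!\bigl(\vartheta^{T_{k}(q)}(q)\bigr) \;\leq\; \sum_{k=0}^{K-1} \alpha^{+}_{f}(i_{k}(q)),
\end{equation*}
where the last inequality uses the definition \eqref{EQ: AlphaWeightGraphDefinition}. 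For a general $t \in \mathbb{T}_{+}$, choose $K = K(t,q)$ so that $T_{K}(q) \leq t < T_{K+1}(q)$; then $0 \leq t - T_{K}(q) < \tau_{i_{K}(q)} \leq T_{\max}$, where $T_{\max} := \max_{i \in \mathcal{V}}\tau_{i}$. Applying \nameref{DESC: SF1} once more gives
\begin{equation*}
	f^{t}(q) \;\leq\; f^{t - T_{K}(q)}\!\bigl(\vartheta^{T_{K}(q)}(q)\bigr) + f^{T_{K}(q)}(q) \;\leq\; C + \sum_{k=0}^{K-1} \alpha^{+}_{f}(i_{k}(q)),
\end{equation*}
where $C := \max_{0 \leq s \leq T_{\max}}\sup_{p \in \mathcal{K}} f^{s}(p)$. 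This constant is finite because iterating \nameref{DESC: SF1} starting from \nameref{DESC: SF3} bounds $f^{s}$ uniformly on $\mathcal{K}$ for $s \leq T_{\max}$ (and, if needed, one invokes upper semicontinuity \nameref{DESC: SF2} on the compact $\mathcal{K}$).

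Now denote the right-hand side of \eqref{EQ: UniformLEestimateViaGraphWeights} by $R$ (allowing $R = +\infty$, in which case the claim is trivial). For any $\varepsilon > 0$, by the definition of $\limsup$ there exists $K_{0}$ such that for all $K \geq K_{0}$ and all $q \in \mathcal{K}$,
\begin{equation*}
	\sum_{k=0}^{K-1}\alpha^{+}_{f}(i_{k}(q)) \;\leq\; (R+\varepsilon)\, T_{K}(q).
\end{equation*}
Since $T_{K}(q) \geq K$, we have $K \geq t/T_{\max} - 1$, so $K(t,q) \geq K_{0}$ whenever $t \geq T_{\max}(K_{0}+1)$. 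For such $t$, the previous bound together with $T_{K}(q) \leq t$ gives
\begin{equation*}
	f^{t}(q) \;\leq\; C + (R+\varepsilon)\, T_{K}(q) \;\leq\; C + (R+\varepsilon)\, t + |R+\varepsilon|\, T_{\max},
\end{equation*}
uniformly in $q \in \mathcal{K}$. Dividing by $t$, taking $\sup_{q \in \mathcal{K}}$, and letting $t \to +\infty$ in the definition \eqref{EQ: UniformUpperGrowthExponentF} yields $\lambda_{f} \leq R + \varepsilon$. Since $\varepsilon > 0$ is arbitrary, $\lambda_{f} \leq R$, which is \eqref{EQ: UniformLEestimateViaGraphWeights}.

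The main obstacle is strictly bookkeeping: since the cumulative time $T_{K}(q)$ rarely lands exactly on a prescribed $t$, one must absorb both the short terminal block of length $t - T_{K}(q) < T_{\max}$ and the discrepancy between $T_{K}(q)$ and $t$. Both are controlled uniformly by $T_{\max}$ and the constant $C$ coming from \nameref{DESC: SF3}, which is why these produce only additive $O(1)$ corrections that vanish after dividing by $t$.
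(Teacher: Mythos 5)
Your proof is correct and follows essentially the same route as the paper's: iterate \nameref{DESC: SF1} along the itinerary to bound $f^{T_K(q)}(q)$ by $\sum_k\alpha^+_f(i_k)$, pad the discrepancy $t-T_K(q)<\tau^+$ using the constant from \nameref{DESC: SF3}, and take the limit after dividing by $t$ (the paper packages the final step via the factor $1-r_0(t)/t$ rather than your explicit $\varepsilon$-argument, but these are cosmetically different presentations of the same estimate). One small slip in exposition: the stated justification ``Since $T_K(q)\geq K$'' does not imply ``$K\geq t/T_{\max}-1$''; what actually gives this is $t<T_{K+1}(q)\leq (K+1)T_{\max}$, i.e.\ the \emph{upper} bound on transition times rather than the lower one, though the conclusion you need (that $K(t,q)\to\infty$ uniformly in $q$) is correct and the argument is otherwise unaffected.
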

\begin{proof}
	Put 
	\begin{equation}
		\label{EQ: UniformGrowthExponentEstimateLemma1}
		\tau^{+} = \max_{i \in \mathcal{V}}\tau_{i} \text{ and } M \coloneq \sup_{t \in [0,\tau^{+}] \cap \mathbb{T}_{+}}\sup_{q \in \mathcal{K}}f^{t}(q).
	\end{equation}
	Note that $M < \infty$ due to \nameref{DESC: SF1} and \nameref{DESC: SF3}.
	
	Let $q \in \mathcal{K}$ be fixed. Clearly, for any $t>0$ there exists an integer $K(t)=K(t;q) \geq 0$ such that $t = T(t) + r(t)$, where $T(t) = T(t;q) \coloneq \sum_{k=0}^{K(t)}\tau_{j_{k}}$ and $r(t)=r(t;q) \in [0,\tau^{+})$. From \nameref{DESC: SF1}, we have
	\begin{equation}
		\label{EQ: DGLULEestimatePoint}
		\begin{split}
			f^{T(t)}(q) \leq\\
			\leq f^{\tau_{j_{K(t)}}}( \vartheta^{\sum_{k=0}^{ K(t)-1 } \tau_{j_{k}}}(q)) + \ldots + f^{\tau_{j_{1}}}(\vartheta^{\tau_{0}}(q)) + f^{\tau_{j_{0}}}(q) \leq \\
			\leq \sum_{k=0}^{K(t)}\alpha^{+}_{f}(i_{k}).
		\end{split}
	\end{equation}

	From \nameref{DESC: SF1} and \eqref{EQ: UniformGrowthExponentEstimateLemma1} we get $f^{t}(q) \leq f^{T(t)}(q) + M$ for any $t \in \mathbb{T}_{+}$. This and \eqref{EQ: DGLULEestimatePoint} give
	\begin{equation}
		\label{EQ: PreparatoryEstimateTimeWeights3}
		\frac{1}{t}f^{t}(q) \leq  \frac{1}{t} \sum_{k=0}^{K(t)}\alpha^{+}_{f}(i_{k}) + \frac{M}{t}.
	\end{equation}
	
	By taking the supremum over $q \in \mathcal{K}$ in \eqref{EQ: PreparatoryEstimateTimeWeights3} we get that for all $t > \tau^{+}$ there exist $q_{0}=q_{0}(t)$, $K_{0}(t)=K(t;q_{0})$ and $r_{0}(t) = r(t;q_{0})$ such that
	\begin{equation}
		\begin{split}
			\frac{1}{t} \sup_{q \in \mathcal{K}}f^{t}(q) \leq \frac{1}{t}\sum_{k=0}^{K_{0}(t)}\alpha^{+}_{f}(i_{k}(q_{0})) + \frac{M}{t} \leq\\
			\leq \left(1 - \frac{r_{0}(t)}{t}\right) \sup_{q \in \mathcal{K}}\frac{\sum_{k=0}^{K_{0}(t)}\alpha^{+}_{f}(i_{k}(q))}{ \sum_{k=0}^{K_{0}(t)} \tau_{i_{k}(q)}} + \frac{M}{t}.
		\end{split}
	\end{equation}
	Since $K_{0}(t) \to +\infty$ as $t \to +\infty$, taking it to the limit superior as $t \to +\infty$ gives the desired inequality \eqref{EQ: UniformLEestimateViaGraphWeights}. The proof is finished.
\end{proof}

By Proposition \ref{PROP: ULEestimatePreparatory}, our problem is reduced to the study of infinite paths\footnote{More precisely, to the study of \textit{admissible} infinite paths in $G$, i.e. the ones which corresponds to a trajectory of the dynamical system. Usually, there are many infinite paths which are not admissible. One can refine such paths by considering refined coverings and right-resolving presentations as in \cite{FroylandJungeochs2001, Osipenko2006}. In our context, this will also lead to optimization with weights defined on edges rather than vertices. Our general approach still generalizes to this case, but it is questionable whether such refinements may help because the main obstacles in our approach are concerned with controlling many local extrema of singular values. We plan to test it in future works.} in $G$. It is convenient to speak about such paths in terms of topological Markov chains. For this, consider the space $\mathfrak{I}^{+}_{\mathcal{V}} \coloneq \mathcal{V}^{\mathbb{Z}_{+}}$ of one-sided sequences $\mathfrak{i} = (i_{0},i_{1},\ldots)$ of symbols from the alphabet $\mathcal{V}$. Define the \textit{left shift} $\sigma$ as the self-map of $\mathfrak{I}^{+}_{\mathcal{V}}$ given by
\begin{equation}
	\sigma(\mathfrak{i}) \coloneq ( i_{1}, i_{2}, \ldots) \text{ for } \mathfrak{i} = (i_{0},i_{1},i_{2},\ldots) \in \mathfrak{I}^{+}_{\mathcal{V}}.
\end{equation}
One can endow $\mathfrak{I}^{+}_{\mathcal{V}}$ with the natural product topology which is metrizable. Therefore $\sigma$ becomes a continuous self-map of $\mathfrak{I}^{+}_{\mathcal{V}}$. 

Now let $\mathfrak{I}_{G}$ be the subset consisting of $\mathfrak{i} = (i_{0},i_{1},\ldots) \in \mathfrak{I}^{+}_{\mathcal{V}}$ satisfying
\begin{equation}
	(i_{s}, i_{s+1}) \in \mathcal{E} \text{ for any } s = 0,1,2,\ldots.
\end{equation}
Clearly, $\mathfrak{I}_{G}$ is a compact subset which is positively invariant w.r.t. $\sigma$. Let $\sigma_{G}$ be the restriction of $\sigma$ to $\mathfrak{I}_{G}$. Then the dynamical system $(\mathfrak{I}_{G}, \sigma_{G})$ is known as a \textit{topological Markov chain} (or a \textit{subshift of finite type}). Clearly, elements of $\mathfrak{I}_{G}$ represent paths (walks) in the graph $G$ with infinite length and $\sigma$ is a shift along such paths.

For any $\mathfrak{i} = (i_{0},i_{1},\ldots) \in \mathfrak{I}_{G}$, we put $\alpha^{+}_{f}(\mathfrak{i}) \coloneq \alpha^{+}_{f}(i_{0})$, where the latter value is given by \eqref{EQ: AlphaWeightGraphDefinition}. We define the \textit{$\tau$-relative time-$t$ weight} $W^{t}_{\tau}(\mathfrak{i})$ of $\mathfrak{i}$ as
\begin{equation}
	\label{EQ: RelativeTweightMarkovChain}
	W^{t}_{\tau}(\mathfrak{i}) \coloneq \frac{\sum_{s=0}^{t-1}\alpha^{+}_{f}(\sigma^{s}(\mathfrak{i}))}{\sum_{s=0}^{t-1}\tau_{i_{s}}}.
\end{equation}

From Proposition \ref{PROP: ULEestimatePreparatory} we immediately get
\begin{equation}
	\label{EQ: UniformExpEstimateViaGraphPath}
	\lambda_{f} \leq \limsup_{t \to +\infty}\sup_{\mathfrak{i} \in \mathfrak{I}_{G}} W^{t}_{\tau}(\mathfrak{i}).
\end{equation}

Let us show that the limit superior in \eqref{EQ: UniformExpEstimateViaGraphPath} is in fact a limit and it has an expression via simple cycles in the graph. For this, take $t \geq 2$ and let $G_{t}$ be the set of all $t$-tuples $\mathfrak{i} = (i_{0},i_{1},\ldots,i_{t-1}) \in \mathcal{V}^{t}$ representing paths of length $t$ in $G$, i.e. such that $(i_{s},i_{s+1}) \in \mathcal{E}$ for any $s \in \{0,\ldots,t-2\}$. Analogously to \eqref{EQ: RelativeTweightMarkovChain} we define the $\tau$-relative weight of $\mathfrak{i}$ as
\begin{equation}
	\label{EQ: RelativeWeightFinitePath}
	W_{\tau}(\mathfrak{i}) = W_{\tau}(\mathfrak{i}; G) = W_{\tau}(\mathfrak{i}; G; f) \coloneq \frac{\sum_{s=0}^{t-1}\alpha^{+}_{f}(i_{s})}{\sum_{s=0}^{t-1}\tau_{i_{s}}}.
\end{equation}

A $t$-tuple $\mathfrak{i}^{c} = (i^{c}_{0},\ldots,i^{c}_{t-1}) \in G_{t}$ such that $(i^{c}_{t-1},i^{c}_{0}) \in \mathcal{E}$ is called \textit{a cycle} and $t$ is said to be the \textit{period} $T(\mathfrak{i}^{c})$ of $\mathfrak{i}^{c}$. If, in addition, all components $i^{c}_{s}$, where $s \in \{0,\ldots,t-1\}$, are distinct, $\mathfrak{i}^{c}$ is called a \textit{simple cycle}. Clearly, for a simple cycle, $T(\mathfrak{i}^{c})$ does not exceed the number of vertices $|\mathcal{V}|$. 

For a cycle $\mathfrak{i}^{c}$ with period $T(\mathfrak{i}^{c})$, the \textit{$\tau$-relative weight} $W_{\tau}(\mathfrak{i}^{c})$ of $\mathfrak{i}^{c}$ is defined via \eqref{EQ: RelativeWeightFinitePath} with $\mathfrak{i} = \mathfrak{i}^{c}$ and $t = T(\mathfrak{i}^{c})$. It is not hard to see that if $\mathfrak{i}^{c}$ is a repetition of a shorter cycle, then their $\tau$-relative weights coincide.
\begin{proposition}
	\label{PROP: GraphPeriodicMeasure}
	 We have
	\begin{equation}
		\label{EQ: GraphCycleExtremalMeasures}
		\lim_{t \to +\infty}\sup_{\mathfrak{i} \in \mathfrak{I}_{G}} W^{t}_{\tau}(\mathfrak{i}) = \max_{ \mathfrak{i}^{c} } W_{\tau}(\mathfrak{i}^{c}),
	\end{equation}
	where the maximum is taken over all simple cycles $\mathfrak{i}^{c} = (i^{c}_{0},\ldots,i^{c}_{T(\mathfrak{i}^{c})-1})$. 
	
	Moreover, this maximum satisfies the inequality
	\begin{equation}
		\label{EQ: LimitBoundViaFinitePath}
		\max_{ \mathfrak{i}^{c} } W_{\tau}(\mathfrak{i}^{c}) \leq \sup_{\mathfrak{i} \in G_{t}} W_{\tau}(\mathfrak{i}) \text{ for any } t \geq 2.
	\end{equation}
\end{proposition}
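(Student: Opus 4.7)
My plan is to first prove \eqref{EQ: LimitBoundViaFinitePath} by a shift-averaging argument on an optimal simple cycle, and then derive both inequalities in \eqref{EQ: GraphCycleExtremalMeasures} from it, combined with a walk-decomposition argument. I assume the graph contains at least one cycle, since otherwise $\mathfrak{I}_{G} = \emptyset$ and both sides of \eqref{EQ: GraphCycleExtremalMeasures} equal $-\infty$ trivially.

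To establish \eqref{EQ: LimitBoundViaFinitePath}, I would fix a simple cycle $\mathfrak{i}^{c,*} = (i^{c,*}_{0}, \ldots, i^{c,*}_{T^{*}-1})$ realizing the maximum, and set $A^{*} := \sum_{l=0}^{T^{*}-1} \alpha^{+}_{f}(i^{c,*}_{l})$ and $B^{*} := \sum_{l=0}^{T^{*}-1} \tau_{i^{c,*}_{l}}$, so that $W^{\max} := \max_{\mathfrak{i}^{c}} W_{\tau}(\mathfrak{i}^{c}) = A^{*}/B^{*}$. For each starting offset $j \in \{0, \ldots, T^{*}-1\}$, I consider the length-$t$ path $\mathfrak{j}_{j} = (i^{c,*}_{(j+s) \bmod T^{*}})_{s=0}^{t-1} \in G_{t}$ obtained by traversing $\mathfrak{i}^{c,*}$ periodically from position $j$, with numerator $A_{j}$ and denominator $B_{j}$ as in \eqref{EQ: RelativeWeightFinitePath}. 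A direct combinatorial count shows that each vertex $i^{c,*}_{l}$ appears in the collection $\{\mathfrak{j}_{0}, \ldots, \mathfrak{j}_{T^{*}-1}\}$ with a total multiplicity of exactly $t$, yielding $\sum_{j} A_{j} = t A^{*}$ and $\sum_{j} B_{j} = t B^{*}$. The elementary observation that if $\sum_{j} a_{j} = M \sum_{j} b_{j}$ with $b_{j} > 0$ then some $a_{j}/b_{j} \geq M$ then produces an index $j$ with $W_{\tau}(\mathfrak{j}_{j}) \geq W^{\max}$, establishing \eqref{EQ: LimitBoundViaFinitePath}.

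The $\geq$ inequality in \eqref{EQ: GraphCycleExtremalMeasures} follows immediately: extending the optimal $\mathfrak{j}_{j}$ above by continued periodic traversal of $\mathfrak{i}^{c,*}$ produces an element $\mathfrak{i}^{\mathrm{per},j} \in \mathfrak{I}_{G}$ with $W^{t}_{\tau}(\mathfrak{i}^{\mathrm{per},j}) = W_{\tau}(\mathfrak{j}_{j}) \geq W^{\max}$ for every $t \geq 2$, so $\liminf_{t \to +\infty} \sup_{\mathfrak{i} \in \mathfrak{I}_{G}} W^{t}_{\tau}(\mathfrak{i}) \geq W^{\max}$. For the reverse inequality, I would use a walk decomposition: starting from any $(i_{0}, \ldots, i_{t-1}) \in G_{t}$, iteratively identify the earliest repeated vertex and excise the sub-walk between its two occurrences. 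By the minimality rule this sub-walk is a simple cycle, so after finitely many extractions one is left with a residual simple (non-self-intersecting) path $\mathfrak{p}$ of length at most $|\mathcal{V}|$ and a collection of simple cycles $\mathfrak{c}_{1}, \ldots, \mathfrak{c}_{k}$. Applying $A(\mathfrak{c}_{j}) \leq W^{\max} B(\mathfrak{c}_{j})$ to each cycle, bounding the residual contributions $|A(\mathfrak{p})|$ and $B(\mathfrak{p})$ uniformly by a constant $C$ depending on $|\mathcal{V}|$, $\tau^{+} := \max_{i} \tau_{i}$, and $\max_{i} \alpha^{+}_{f}(i)$, and using $\sum_{j} B(\mathfrak{c}_{j}) + B(\mathfrak{p}) \geq t$ (since $\tau_{i} \geq 1$), I obtain $W^{t}_{\tau}(\mathfrak{i}) \leq W^{\max} + C'/t$ uniformly in $\mathfrak{i} \in \mathfrak{I}_{G}$. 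Hence $\limsup_{t \to +\infty} \sup_{\mathfrak{i} \in \mathfrak{I}_{G}} W^{t}_{\tau}(\mathfrak{i}) \leq W^{\max}$, and combined with the lower bound this yields the equality \eqref{EQ: GraphCycleExtremalMeasures}.

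The main delicate point will be the decomposition step, specifically ensuring that each extracted sub-cycle is genuinely \emph{simple} (so that the bound $W_{\tau}(\mathfrak{c}_{j}) \leq W^{\max}$ applies). This is handled by the \emph{earliest-repetition} rule: if $b$ is the first index for which $i_{b} \in \{i_{0}, \ldots, i_{b-1}\}$ and $a$ is the unique earlier occurrence, then $i_{a}, i_{a+1}, \ldots, i_{b-1}$ are pairwise distinct by minimality of $b$, so $(i_{a}, \ldots, i_{b-1})$ is a simple cycle. Beyond this, the argument reduces to routine pigeonhole bookkeeping.
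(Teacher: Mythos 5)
Your proof is correct, and it reverses the paper's logical order in a genuinely different and arguably cleaner way. The paper first establishes \eqref{EQ: GraphCycleExtremalMeasures} (via the cycle-repetition observation for the lower bound and the same walk-decomposition for the upper bound), and only then derives \eqref{EQ: LimitBoundViaFinitePath} from it: the paper normalizes the weights so that all $\alpha^{+}_{f}(i)>0$ (shifting by $\varkappa\tau_{i}$), applies the mediant bound $\frac{x+a}{y+b}\leq\max\{\frac{x}{y},\frac{a}{b}\}$ to the length-$t$ blocks of an infinite path, and passes to the limit in $K$ using \eqref{EQ: GraphCycleExtremalMeasures}. You instead prove \eqref{EQ: LimitBoundViaFinitePath} directly and self-containedly via the cyclic-shift averaging on the optimal simple cycle: since $\sum_{j}A_{j}=tA^{*}$ and $\sum_{j}B_{j}=tB^{*}$, some rotation $\mathfrak{j}_{j}$ satisfies $W_{\tau}(\mathfrak{j}_{j})\geq W^{\max}$ for every $t\geq 2$, and the periodic extension of that rotation then hands you the $\geq$ half of \eqref{EQ: GraphCycleExtremalMeasures} for free, without the remainder-decays-as-$t\to\infty$ hand-waving the paper relies on at that step. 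The $\leq$ half is handled by the same simple-cycle extraction as in the paper, though your explicit bound $W^{t}_{\tau}(\mathfrak{i})\leq W^{\max}+C'/t$ is a more careful statement than the paper's $W^{+}(1+o(1))$, which is technically awkward when $W^{+}\leq 0$. The net gain of your route is that \eqref{EQ: LimitBoundViaFinitePath} is obtained without the weight-normalization trick and without appealing to the limit in \eqref{EQ: GraphCycleExtremalMeasures}, and the lower bound in \eqref{EQ: GraphCycleExtremalMeasures} follows for every fixed $t$ rather than only asymptotically; the paper's route avoids the (slightly less standard, though elementary) averaging observation.
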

\begin{proof}
	Clearly, the right-hand side of \eqref{EQ: GraphCycleExtremalMeasures} does not exceed the left one (more precisely, the limit inferior) because repetitions of cycles provide infinite paths in the graph with $\tau$-relative time-$t$ weights being equal to $W_{\tau}(\mathfrak{i}^{c})$ if $t$ is a multiple of $T(\mathfrak{i}^{c})$. For general $t$, a remainder appears which decays as $t \to +\infty$ (analogously to the argument below).
	
	For the contrary, note that, by finiteness of $\mathcal{V}$, for any $t$ there exists $\mathfrak{i}=(i_{0},\ldots,i_{t-1}) \in G_{t}$ such that
	\begin{equation}
		\sup_{\mathfrak{j} \in \mathfrak{I}_{G}} W^{t}_{\tau}(\mathfrak{j}) = W_{\tau}(\mathfrak{i}) =  \frac{\sum_{s=0}^{t-1}\alpha^{+}_{f}(i_{s})}{\sum_{s=0}^{t-1}\tau_{i_{s}}}.
	\end{equation}
	It is not hard to see (by induction) that $\mathfrak{i}$ can be decomposed into an algebraic sum of, say $K$, simple cycles $\mathfrak{i}^{c}_{1},\ldots,\mathfrak{i}^{c}_{K}$ plus a remainder $\mathfrak{i}^{R} \in G_{r}$ for some $r \in \{0, 1, \ldots, |\mathcal{E}| - 1\}$. Thus, $t = \sum_{k=1}^{K}T(\mathfrak{i}^{c}_{k}) + r$ and
	\begin{equation}
		\label{EQ: AlgebraicWalkDecomposition}
		\begin{split}
			&\sum_{s=0}^{t-1}\alpha^{+}_{f}(j_{s}) = \sum_{k=1}^{N} \sum_{s=0}^{T(\mathfrak{i}^{c}_{k})}\alpha^{+}_{f}(i^{c}_{k,s}) + \sum_{s = 0}^{r} \alpha^{+}_{f}(i^{R}_{s}), \\
			&\sum_{s=0}^{t-1}\tau_{j_{s}} = \sum_{k=1}^{N} \sum_{s=0}^{T(\mathfrak{i}^{c}_{k})}\tau_{i^{c}_{k,s}} + \sum_{s = 0}^{r} \tau_{i^{R}_{s}}
		\end{split}
	\end{equation}
	where $\mathfrak{i}^{c}_{k} = ( i^{c}_{k,0},\ldots,i^{c}_{k,T(\mathfrak{i}^{c}_{k})-1})$ and $\mathfrak{i}^{R} = (i^{R}_{0},\ldots,i^{R}_{r-1})$.
	
	Let $W^{+}$ be the right-hand side of \eqref{EQ: GraphCycleExtremalMeasures}. Then
	\begin{equation}
		\sum_{s=0}^{T(\mathfrak{i}^{c}_{k})}\alpha^{+}_{f}(i^{c}_{k,s}) \leq W^{+} \cdot \sum_{s=0}^{T(\mathfrak{i}^{c}_{k})}\tau_{i^{c}_{k,s}}.
	\end{equation}
	for any $k \in \{1,\ldots,K\}$. From this and \eqref{EQ: AlgebraicWalkDecomposition}, we get
	\begin{equation}
		\sup_{\mathfrak{j} \in \mathfrak{I}_{G}} W^{t}_{\tau}(\mathfrak{j}) \leq W^{+}(1 + o(1))
	\end{equation}
	as $t \to +\infty$. This shows the converse inequality and proves \eqref{EQ: GraphCycleExtremalMeasures}.
	
	To show \eqref{EQ: LimitBoundViaFinitePath}, note that we may assume $\alpha^{+}_{f}(i) > 0$ for any $i \in \mathcal{V}$ otherwise exchanging weights $\alpha^{+}_{f}(i)$ with $\alpha^{+}_{f}(i) + \varkappa \tau_{i}$, where $\varkappa$ is any number satisfying $\varkappa > -\alpha^{+}_{f}(i)$ for any $i \in \mathcal{V}$. This exchange only adds $\varkappa$ to all the relative weights.
	
	It is not hard to see that for any $x,a \geq 0$ and $y,b>0$ we have
	\begin{equation}
		\label{EQ: SumQuotientViaMaxQuotients}
		\frac{x+a}{y+b} \leq \max\left\{ \frac{x}{y}, \frac{a}{b} \right\}.
	\end{equation}
	
	Now let $t \geq 2$ be fixed and take $\mathfrak{j}=(j_{0},j_{1},\ldots) \in \mathfrak{I}_{G}$. For any nonnegative integer $k$ consider $\mathfrak{i}^{k} = (j_{k t}, j_{kt + 1}, \ldots, j_{(k+1)t - 1}) \in G_{t}$. From the assumed positivity of $\alpha^{+}_{f}$ and \eqref{EQ: SumQuotientViaMaxQuotients}, we have
	\begin{equation}
		W^{K t}_{\tau}(\mathfrak{j}) \leq \max_{0 \leq k \leq K-1 } W^{t}_{\tau}(\mathfrak{i}^{k}) \leq \max_{\mathfrak{i} \in G_{t}} W_{\tau}(\mathfrak{i})
	\end{equation}
	for any $K=1,2,\ldots$. Taking the limit as $K \to +\infty$ shows \eqref{EQ: LimitBoundViaFinitePath} due to \eqref{EQ: GraphCycleExtremalMeasures}. The proof is finished.
\end{proof}

\begin{remark}
	\label{REM: ErgodicOptimization}
	Proposition \ref{PROP: GraphPeriodicMeasure} can be viewed as a discrete version of the Periodicity Conjecture. Its validness also reflects the validity (by elementary arguments) of many other discrete versions of variational relations in dynamics. For example, a discrete version of the variational principle for topological entropy is also valid (see W.~Parry \cite{Parry1964}). 
	
	Having in mind that paths in symbolic images correspond to pseudo-trajectories, this also motivates validity of the continuous versions if the chain-recurrent set is in a sense well-behaved w.r.t. to its approximations via symbolic images, although we do not know works treating the problem from this perspective. \qed
\end{remark}

Symbolic images of our interest are related to chaotic attractors. In this case, there are too many simple cycles which cannot be enumerated in a reasonable time. For equal transition times, computing the maximum in \eqref{EQ: GraphCycleExtremalMeasures} is known as the \textit{Minimum Mean Cycle Problem} and, for general transition times, this is the \textit{Minimum Cost-to-Time Ratio Cycle Problem} (see Section 5.7 in R.K.~Ahuja, T.L.~Magnanti and J.B.~Orlin \cite{AhujaMagnantiOrlinNF1993}). For the former, there is an algorithm with computational complexity $O(|\mathcal{E}| \cdot |\mathcal{V}|)$ (both in time and memory). For the latter, one uses consecutive applications of the first algorithm and this adds a multiplier to the complexity arising from binary search (see below). 

Our approach will be based on heuristics concerned with the estimate from \eqref{EQ: LimitBoundViaFinitePath} which make computations much faster. Namely, experimental observations show that for sufficiently large values of $t$, the optimal path of length $t$ algebraically is a multiple repetition of a short simple cycle plus a small remainder\footnote{This is an empirical property of symbolic images inherited from the Periodicity Conjecture.}. It is likely that this cycle is the maximal one, but rigorously we can get only the estimate through the remainder. In the case of equal transition times, the maximal path of length $t$ can be found in $O(t \cdot d \cdot |\mathcal{V}|)$, where $d$ is the mean degree of vertices (so $d \cdot |\mathcal{V}| = |\mathcal{E}|$) which is relatively small (say, $\leq 20$) for appropriate symbolic images. For computations related to Iterative Nonlinear Programming discussed in Section \ref{SEC: OptimizatiobViaINP}, $t$ may be taken much smaller than $|\mathcal{V}|$ (say $1000$ versus $10^{6}$).

Let us describe the algorithm. For $k \geq 2$ and $i \in \mathcal{V}$, let $W(k,i)$ be the maximum of $\sum_{s=0}^{k-1} \alpha^{+}_{f}(i_{s})$ taken over all paths $\mathfrak{i} = (i_{0},i_{1},\ldots, i_{k-1}) \in G_{k}$ with $i_{0} = i$. For $k=1$, we put $W(1,i) \coloneq \alpha^{+}_{f}(i)$. It is not hard to see that
\begin{equation}
	\label{EQ: RecurrenceMathPathEqualTimes}
	W(k+1, i) = \alpha^{+}_{f}(i) + \max_{j \colon (i,j) \in \mathcal{E}} W(k,j)
\end{equation}
for any $i \in \mathcal{V}$ and $k \geq 1$. Moreover, we can keep $J(k+1,i)$ as $j$ on which the maximum is achieved in the above formula (for $J(1,i)$ we take any $j$ with $(i,j) \in \mathcal{E}$). By maximizing $W(t,i)$ over $i \in \mathcal{V}$, we obtain a maximal path of length $t$ with the mentioned algorithmic complexity.
%\footnote{For Python users: time-efficiency of the implemented algorithm can be significantly improved via Numba. On our device (see Remark \ref{REM: OurDevice}) it gives $100-200$ times acceleration so a maximal path of length $t=1000$ with $|\mathcal{V}| \approx 10^{6}$ and $d \approx 10.5$ (this is the graph $G^{h}_{\varepsilon, k_{\tau}}$ from Section \ref{SEC: RabinovichLargestULE} with $\varepsilon = 0.001$ and $k_{\tau} = 3$ after symmetry reduction) is found for about $20$ seconds.}.

\begin{remark}
	Note that \eqref{EQ: RecurrenceMathPathEqualTimes} requires only remembering the previous step to proceed further. If we need only the maximal weight value, this observation allows to lower the memory complexity to $O(\mathcal{V})$. This is significant to obtain more precise estimates from \eqref{EQ: LimitBoundViaFinitePath}.
\end{remark}

For general transition times, the resolution of the problem is concerned with consecutive applications of the above search by appropriate changes of weights similarly to the Minimum Cost-to-Time Ratio Cycle Problem (see Section 5.7 in \cite{AhujaMagnantiOrlinNF1993}). Namely, for $\varkappa \in \mathbb{R}$ define $\alpha_{\varkappa}(i) \coloneq \alpha^{+}_{f}(i) - \varkappa \tau_{i}$ for $i \in \mathcal{V}$. Let $\mathfrak{i}=(i_{0},\ldots,i_{t-1}) \in G_{t}$ be the maximal path w.r.t. weights given by $\alpha_{\varkappa}$. There may be three cases:
\begin{enumerate}
	\item $\sum_{s=0}^{t-1} \alpha_{\varkappa}(i_{s}) > 0$ or, equivalently, $\sum_{s=0}^{t-1} \alpha^{+}_{f}(i_{s}) > \varkappa \sum_{s=0}^{t-1}\tau_{i_{s}}$. Consequently, we get the lower bound
	\begin{equation}
		\sup_{\mathfrak{j} \in G_{t}} W_{\tau}(\mathfrak{j}) \geq W_{\tau}(\mathfrak{i}) = \frac{\sum_{s=0}^{t-1} \alpha^{+}_{f}(i_{s})}{\sum_{s=0}^{t-1}\tau_{i_{s}}} > \varkappa.
	\end{equation}
	\item $\sum_{s=0}^{t-1} \alpha_{\varkappa}(i_{s}) < 0$, where due to the maximality of $\mathfrak{i}$, we must have $\sum_{s=0}^{t-1} \alpha_{\varkappa}(j_{s}) \leq \sum_{s=0}^{t-1} \alpha_{\varkappa}(i_{s}) < 0$ and, consequently, $\sum_{s=0}^{t-1} \alpha^{+}_{f}(j_{s}) < \varkappa \sum_{s=0}^{t-1}\tau_{j_{s}}$ for any $\mathfrak{j}=(j_{0},\ldots,j_{t-1}) \in G_{t}$. Thus, the upper bound is obtained as
	\begin{equation}
		\sup_{\mathfrak{j} \in G_{t}} W_{\tau}(\mathfrak{j}) < \varkappa.
	\end{equation}
	\item $\sum_{s=0}^{t-1} \alpha_{\varkappa}(i_{s}) = 0$ in which case it is easy to see that
	\begin{equation}
		\sup_{\mathfrak{j} \in G_{t}} W_{\tau}(\mathfrak{j}) = W_{\tau}(\mathfrak{i}) = \varkappa.
	\end{equation}
\end{enumerate}
Clearly, $\sup_{\mathfrak{j} \in G_{t}} W_{\tau}(\mathfrak{j})$ can be bounded in terms of minima and maxima of $\alpha^{+}_{f}(i)$ and $\tau_{i}$ over $i \in \mathcal{V}$. Then we can use binary search to sharp the bounds based on the above observations. If the bounding interval is sufficiently small, then any $\mathfrak{i}$ computed for $\varkappa$ within it provides a good approximation (or likely the answer) to the optimization of relative weights.

So, \eqref{EQ: LimitBoundViaFinitePath} delivers a nice approach to get estimates of the limit in \eqref{EQ: GraphCycleExtremalMeasures} for particular metrics and to extract relevant cycles for optimization in families of metrics. We refer to Section \ref{SEC: OptimizatiobViaINP} for further discussions.

Recall that a vertex $i \in \mathcal{V}$ of $G$ is called a \textit{sink} (resp. \textit{source}) if there is no $j \in \mathcal{V}$ such that $(i,j) \in \mathcal{E}$ (resp. $(j,i) \in \mathcal{E}$).
\begin{corollary}
	\label{COR: ReducedGraphEstCorollary}
	Let $G'$ be a subgraph of $G$ obtained by iteratively removing sinks and sources from $G$. In other words, $G'$ is the maximal subgraph of $G$ for which any finite path $(i_{0},i_{1},\ldots,i_{n})$ has a two-sided infinite continuation $(\ldots,i_{-1},i_{0},i_{1},\ldots)$. Then the limit in \eqref{EQ: GraphCycleExtremalMeasures} is the same for $G$ exchanged with $G'$ and the covering elements corresponding to the vertices in $G'$ still covers $\mathcal{K}$.
\end{corollary}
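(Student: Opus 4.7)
The plan is to handle the two claims independently: first, that the maximum of $\tau$-relative weights over simple cycles is invariant under the sink/source reduction, and second, that no vertex corresponding to a covering element meeting $\mathcal{K}$ is ever removed.

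For the first claim, I would observe that the right-hand side of \eqref{EQ: GraphCycleExtremalMeasures} depends only on the collection of simple cycles of the graph together with the weights $\alpha^{+}_{f}(i)$ and transition times $\tau_{i}$ at their vertices; the latter do not change under the reduction. Thus it suffices to show that a single removal step of a sink or a source leaves the set of simple cycles intact, from which the conclusion follows by induction. But if $i$ is a sink, there is no edge $(i,j)\in\mathcal{E}$, so $i$ cannot appear in any simple cycle $\mathfrak{i}^{c}=(i_{0}^{c},\ldots,i_{T-1}^{c})$ since every vertex of such a cycle has an outgoing edge within the cycle. The symmetric argument covers sources. Consequently the removal leaves every simple cycle of $G$ together with all its vertices intact, and by Proposition \ref{PROP: GraphPeriodicMeasure} the limit in \eqref{EQ: GraphCycleExtremalMeasures} is unchanged.

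For the covering claim, let $q\in\mathcal{K}$ and pick any $i\in\mathcal{V}$ with $q\in\mathcal{U}^{i}$; it suffices to show $i$ belongs to $G'$, equivalently, that some two-sided infinite path through $i$ exists in $G$. By positive invariance $\vartheta^{\tau_{i}}(q)\in\mathcal{K}$, so we may iterate forward the selection used in Proposition \ref{PROP: ULEestimatePreparatory} to obtain a forward walk $(i,i_{1},i_{2},\ldots)$ in $G$. For the backward extension, I would use the standing assumption that $\mathcal{K}$ is invariant, i.e. $\vartheta^{t}(\mathcal{K})=\mathcal{K}$ for every $t\in\mathbb{T}_{+}$, so that $q$ admits a preimage $q_{-1}\in\mathcal{K}$ with $\vartheta^{\tau_{j_{-1}}}(q_{-1})=q$ for some vertex $j_{-1}$ with $q_{-1}\in\mathcal{U}^{j_{-1}}$ (the edge $(j_{-1},i)$ being present by definition of $\mathcal{E}$ since $\vartheta^{\tau_{j_{-1}}}(\mathcal{U}^{j_{-1}})\cap\mathcal{U}^{i}\ni q$). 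Iterating this preimage selection produces a one-sided infinite backward walk, which glued with the forward walk yields the desired two-sided infinite walk through $i$. Hence $i$ is never pruned and $\bigcup_{i\in\mathcal{V}(G')}\mathcal{U}^{i}\supset\mathcal{K}$.

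The only delicate point is the backward extension: it crucially uses that $\mathcal{K}$ is invariant (not merely positively invariant), so that preimages in $\mathcal{K}$ exist. If the stronger invariance were unavailable, the covering claim would have to be weakened to an asymptotic one (covering of the $\omega$-limit set or chain-recurrent set), but under the hypotheses in force this is not needed.
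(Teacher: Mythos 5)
Your argument for the first claim (invariance of the limit) is correct and is essentially the paper's: the paper's two-sentence proof parenthetically notes that $G$ and $G'$ have ``the same set of infinite paths (in particular, simple cycles),'' and you make precise why the simple cycles in particular are preserved (a vertex in a simple cycle necessarily has in- and out-degree at least one, hence is neither a sink nor a source at any stage of the pruning). Your phrasing is in fact the cleaner one, since the set of \emph{one-sided} infinite paths $\mathfrak{I}_{G}$ is not preserved under pruning (a path starting at a source is lost), whereas Proposition~\ref{PROP: GraphPeriodicMeasure} reduces everything to simple cycles, which \emph{are} preserved.

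The second claim (covering) is where you flag a delicate point, but you misidentify its source. Invariance of $\mathcal{K}$ indeed provides preimages at any prescribed time, yet what your backward step actually needs is a $q_{-1}\in\mathcal{K}$ with $q_{-1}\in\mathcal{U}^{j_{-1}}$ \emph{and} $\vartheta^{\tau_{j_{-1}}}(q_{-1})=q$: the required rewind time $\tau_{j_{-1}}$ depends on which covering element $q_{-1}$ falls into, which depends on how far back you rewind. With equal transition times $\tau_{i}\equiv\tau$ this decouples and your argument closes; with varying transition times it can fail. Concretely, take a period-$3$ orbit $q_{0}\to q_{1}\to q_{2}\to q_{0}$ with $q_{i}\in\mathcal{U}^{i}$ only, $\tau_{0}=\tau_{1}=1$ and $\tau_{2}=2$: the transition-time map sends $q_{0}\mapsto q_{1}$, $q_{1}\mapsto q_{2}$, $q_{2}\mapsto q_{1}$, so vertex $0$ has no incoming edge, gets pruned as a source, and $q_{0}\in\mathcal{K}$ is no longer covered by $\bigcup_{i\in\mathcal{V}(G')}\mathcal{U}^{i}$. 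The paper's own proof (``the conclusion is obvious'') does not address this either, so you should state the missing hypothesis explicitly: either assume the transition times are equal, or assume that the ``transition-time return map'' on $\mathcal{K}$ induced by the covering is onto, so that the backward selection can be iterated.
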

\begin{proof}
	Indeed, $G'$ and $G$ have the same set of infinite paths (in particular, simple cycles). So, the conclusion is obvious.
\end{proof}

Note that the estimate \eqref{EQ: LimitBoundViaFinitePath} becomes exact as $t \to +\infty$ since there are finitely many paths without an infinite prolongation in the future (in the graph $G'$ there are no such paths at all) and, consequently, they do not affect the asymptotics. However, since the time-$t$ estimate is exact up to $O(1/t)$ (see Tab.~\ref{TAB: HenonEstimatesPathLengths}), obtaining estimates with precision above $6-8$ digits even for low-dimensional chaotic systems demands the use of supercomputers.

Let us summarize considerations from the present section into the following \textit{Relative Weights Optimization} algorithm evaluating the right-hand of \eqref{EQ: UniformExpEstimateViaGraphPath}.
\begin{description}[before=\let\makelabel\descriptionlabel]
	\item[\textbf{(RWO1)}\refstepcounter{desccount}\label{DESC: GOA1}] Localize a compact invariant subset $\mathcal{K}$ by a set $\mathcal{U}_{0} \supset \mathcal{K}$;
	\item[\textbf{(RWO2)}\refstepcounter{desccount}\label{DESC: GOA2}] Consider a \textit{preparatory covering} $\mathcal{U}_{0} \subset  \bigcup_{i=1}^{N_{0}} \mathcal{Q}^{i}$ of $\mathcal{U}_{0}$ by small subsets $\mathcal{Q}^{i} \subset \mathcal{Q}$;
	\item[\textbf{(RWO3)}\refstepcounter{desccount}\label{DESC: GOA3}] Put $\mathcal{V}_{0} = \{1,\ldots,N_{0}\}$, choose transition times $\{ \tau_{i} \}_{i \in \mathcal{V}_{0}}$ and compute a \textit{preparatory graph} $G_{0} = (\mathcal{V}_{0},\mathcal{E}_{0})$, where the set of edges $\mathcal{E}_{0}$ at least contains such pairs $(i,j) \in \mathcal{V}_{0} \times \mathcal{V}_{0}$ for which $\vartheta^{\tau_{i}}(\mathcal{Q}^{i}) \cap \mathcal{Q}^{j} \not= \emptyset$;
	\item[\textbf{(RWO4)}\refstepcounter{desccount}\label{DESC: GOA4}] Reduce the graph $G_{0}$ to the \textit{refined graph} (a subgraph) $G=(\mathcal{V},\mathcal{E})$ by removing sinks and sources (see above Corollary \ref{COR: ReducedGraphEstCorollary}). Elements $\mathcal{U}^{1},\ldots,\mathcal{U}^{N}$ of the preparatory covering which correspond to the remained vertices (so $\mathcal{V} = \{1,\ldots,N\}$ and $N=|\mathcal{V}|$) provide the \textit{refined cover} of $\mathcal{K}$. Thus, as in \eqref{EQ: CoveringInvariantSet}, we have $\mathcal{U} = \bigcup_{i=1}^{N}\mathcal{U}^{i} \supset \mathcal{K}$;
	\item[\textbf{(RWO5)}\refstepcounter{desccount}\label{DESC: GOA5}] For a given proper subadditive family $f=\{f^{t}\}_{t \in \mathbb{T}_{+}}$ compute $\alpha^{+}_{f}(i)$ as in \eqref{EQ: AlphaWeightGraphDefinition};
	\item[\textbf{(RWO6)}\refstepcounter{desccount}\label{DESC: GOA6}] Find a path in the refined graph $G$ with a given length $t$ and maximal $\tau$-relative weight (see around \eqref{EQ: RecurrenceMathPathEqualTimes}) which provides an upper estimate for $\lambda_{f}$ according to \eqref{EQ: UniformExpEstimateViaGraphPath}, \eqref{EQ: GraphCycleExtremalMeasures} and \eqref{EQ: LimitBoundViaFinitePath}.
\end{description}

We finalize this section by discussing a reduction of the graph $G$ by capturing some symmetries of the system. Remind that an automorphism of $G$ is a bijection $S \colon \mathcal{V} \to \mathcal{V}$ of vertices which preserves the edges, i.e. for any $(i,j) \in \mathcal{E}$ we have $(Si,Sj) \in \mathcal{E}$. All automorphisms of $G$ form a finite group $\operatorname{Aut}(G)$. Let $\mathbb{S}$ be a subgroup of $\operatorname{Aut}(G)$. Then the quotient graph $G_{\mathbb{S}}$ is the graph with vertices $\mathcal{V}_{\mathbb{S}}$ given by the orbits of $\mathbb{S}$ in $\mathcal{V}$ and the set of its edges $\mathcal{E}_{\mathbb{S}}$ consists of orbit pairs $(i_{\mathbb{S}}, j_{\mathbb{S}} )$ such that there exist $i \in i_{\mathbb{S}}$ and $j \in j_{\mathbb{S}}$ with $(i,j) \in \mathcal{E}$.

\begin{proposition}
	\label{PROP: MaximalRelativeWeightViaQuotientGraph}
	Suppose $\alpha^{+}_{f}$ and $\tau_{i}$ are invariant w.r.t. $\mathbb{S}$, i.e.
	\begin{equation}
		\alpha^{+}_{f}(i) = \alpha^{+}_{f}(Si) \text{ and } \tau_{i} = \tau_{Si} \text{ for any } i \in \mathcal{V} \text{ and } S \in \mathbb{S}.
	\end{equation}
	Then $\alpha^{+}_{f}$ and $\tau_{i}$ correctly define weights and transition times in the quotient graph $G_{\mathbb{S}}$ and (see \eqref{EQ: RelativeWeightFinitePath})
	\begin{equation}
		\label{EQ: CycleRelWeigthMaximaViaQuotient}
		\max_{ \mathfrak{i}^{c} } W_{\tau}(\mathfrak{i}^{c};G) = \max_{ \mathfrak{i}^{c} } W_{\tau}(\mathfrak{i}^{c};G_{\mathbb{S}}),
	\end{equation}
	where the first maximum is taken over simple cycles in $G$ and the second maximum is taken over simple cycles in $G_{\mathbb{S}}$.
\end{proposition}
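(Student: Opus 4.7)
My plan is to establish the equality \eqref{EQ: CycleRelWeigthMaximaViaQuotient} by two inequalities, in both cases leveraging the sum-quotient bound \eqref{EQ: SumQuotientViaMaxQuotients} applied to decompositions of closed walks into simple cycles, in the spirit of the proof of Proposition \ref{PROP: GraphPeriodicMeasure}. The $\mathbb{S}$-invariance hypothesis is used twice: first, it guarantees that the assignments $i \mapsto \alpha^{+}_{f}(i)$ and $i \mapsto \tau_{i}$ descend to well-defined data on $\mathcal{V}_{\mathbb{S}}$; second, it makes the relative weight of any walk in $G$ coincide with the relative weight of its projection to $G_{\mathbb{S}}$.

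For the inequality $\leq$, I would take a simple cycle $\mathfrak{i}^{c}=(i_{0},\ldots,i_{t-1})$ in $G$ and project it componentwise through $\pi \colon \mathcal{V} \to \mathcal{V}_{\mathbb{S}}$. By the definition of $\mathcal{E}_{\mathbb{S}}$, the image $\pi(\mathfrak{i}^{c})$ is a closed walk of length $t$ in $G_{\mathbb{S}}$, and $\mathbb{S}$-invariance gives $W_{\tau}(\mathfrak{i}^{c};G)=W_{\tau}(\pi(\mathfrak{i}^{c});G_{\mathbb{S}})$. Since $\pi(\mathfrak{i}^{c})$ may repeat vertices of $G_{\mathbb{S}}$, I would decompose it algebraically into a sum of simple cycles of $G_{\mathbb{S}}$ (with no extra remainder, since the walk is already closed) and apply \eqref{EQ: SumQuotientViaMaxQuotients} iteratively, after the usual positivity normalization $\alpha^{+}_{f}(i) \leftarrow \alpha^{+}_{f}(i)+\varkappa \tau_{i}$, which only shifts all relative weights by the common constant $\varkappa$.

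The harder direction is $\geq$. Here I must lift a given simple cycle $\mathfrak{c}_{\mathbb{S}}=(i^{c}_{0,\mathbb{S}},\ldots,i^{c}_{t-1,\mathbb{S}})$ of $G_{\mathbb{S}}$ to a closed walk in $G$ of the same relative weight, and then decompose it into simple cycles of $G$ to extract one at least as good. Given any starting representative $i_{0} \in i^{c}_{0,\mathbb{S}}$, I would inductively select $i_{s+1} \in i^{c}_{(s+1) \bmod t,\mathbb{S}}$ satisfying $(i_{s},i_{s+1}) \in \mathcal{E}$; such a selection is possible because $\mathcal{E}_{\mathbb{S}}$ is generated by the existence of some edge between orbit representatives and the transitive $\mathbb{S}$-action on orbits lets one translate that edge so its source is exactly $i_{s}$ while preserving edgeness in $G$. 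After $t$ steps the lifted path terminates at some $j_{0} \in i^{c}_{0,\mathbb{S}}$, generally distinct from $i_{0}$, so there exists $S_{1} \in \mathbb{S}$ with $S_{1} i_{0}=j_{0}$. Concatenating the original path with its $S_{1}$-translate produces a valid walk of length $2t$ ending at $S_{1}^{2} i_{0}$, and iterating $N$ times, where $N$ is the order of $S_{1}$ in $\mathbb{S}$, yields a closed walk in $G$ of length $Nt$ whose projection is $\mathfrak{c}_{\mathbb{S}}$ repeated $N$ times. By invariance, its relative weight equals $W_{\tau}(\mathfrak{c}_{\mathbb{S}};G_{\mathbb{S}})$, and a final decomposition into simple cycles combined with \eqref{EQ: SumQuotientViaMaxQuotients} forces at least one simple cycle $\mathfrak{i}^{c}$ of $G$ to satisfy $W_{\tau}(\mathfrak{i}^{c};G) \geq W_{\tau}(\mathfrak{c}_{\mathbb{S}};G_{\mathbb{S}})$.

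The main obstacle is the closing-up step of the lift, since a naive lift need not return to its starting vertex. The crucial observation is that once the discrepancy automorphism $S_{1}$ is fixed from the first round, applying $S_{1}$ to the entire lifted path yields another valid lift starting at $j_{0}$ whose endpoint is $S_{1}^{2} i_{0}$, so iterating in this disciplined manner the walk endpoints traverse the finite cyclic subgroup $\langle S_{1} \rangle \subset \mathbb{S}$ and closure follows after $N$ rounds. Everything else reduces to careful bookkeeping with the sum-quotient inequality already employed in the proof of Proposition \ref{PROP: GraphPeriodicMeasure}.
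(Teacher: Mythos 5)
Your proof is correct, but it takes a genuinely different route from the paper's. The paper argues with \emph{infinite} paths: componentwise projection $\mathfrak{I}_{G}\to\mathfrak{I}_{G_{\mathbb{S}}}$ is surjective and preserves time-$t$ relative weights (lifting an infinite path is a straightforward induction that never needs to close up), so the suprema in \eqref{EQ: UniformExpEstimateViaGraphPath} agree for $G$ and $G_{\mathbb{S}}$, and the identity \eqref{EQ: CycleRelWeigthMaximaViaQuotient} then drops out of Proposition \ref{PROP: GraphPeriodicMeasure} applied to both graphs. You instead argue directly at the level of finite cycles, which forces you to confront the closing-up problem for the lift; your solution --- concatenating successive $S_{1}$-translates of the lifted path until the endpoint returns to $i_{0}$ after a number of rounds equal to the order of $S_{1}$ in $\mathbb{S}$ --- is a valid and rather constructive device (each concatenation edge is in $\mathcal{E}$ because $S_{1}\in\operatorname{Aut}(G)$, and the final closing edge is the $S_{1}^{N-1}$-image of the edge $(i_{t-1},i_{t})$), and the bookkeeping with \eqref{EQ: SumQuotientViaMaxQuotients} after the positivity shift checks out in both directions. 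The trade-off: the paper's proof is shorter and leans entirely on the already-established Proposition \ref{PROP: GraphPeriodicMeasure}, whereas yours is more explicit --- it exhibits a concrete closed walk in $G$ realizing the weight of any prescribed simple cycle of $G_{\mathbb{S}}$, which could matter algorithmically --- at the cost of reproducing the cycle-decomposition argument from the proof of Proposition \ref{PROP: GraphPeriodicMeasure} rather than invoking the proposition as a black box.
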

\begin{proof}
	By definition of $G_{\mathbb{S}}$ and the invariance of functions, any infinite path in $G$ (i.e. the trajectory of $(\mathfrak{I}_{G},\sigma_{G})$) is projected to an infinite path in $G_{\mathbb{S}}$ (i.e. the trajectory of $(\mathfrak{I}_{G_{\mathbb{S}}},\sigma_{G_{\mathbb{S}}})$) with the same $\tau$-relative time-$t$ weights. Conversely, any infinite path in $G_{\mathbb{S}}$ can be lifted to an infinite path in $G$ with the same $\tau$-relative time-$t$ weights (by induction). Then \eqref{EQ: CycleRelWeigthMaximaViaQuotient} follows from Proposition \ref{PROP: GraphPeriodicMeasure}. The proof is finished.
\end{proof}

In virtue of Proposition \ref{PROP: MaximalRelativeWeightViaQuotientGraph}, one may involve discrete symmetries presented in the system in order to reduce the overall computations. For example, in the case of $f$ given by \eqref{EQ: SubadditiveFamilyForULEm} with some metric $\mathfrak{m}$ on $T^{\wedge m}\mathcal{Q}$, this affects the choice of $\mathcal{U}^{i}$, $\tau_{i}$, $\mathfrak{m}$ and computing intersections to respect the symmetries. 

Let us illustrate this in the case when the phase space $\mathcal{Q}$ is a subset of $\mathbb{R}^{n}$ and there exists a subgroup $\mathbb{S}$ of invertible linear transformations (matrices) in $\mathbb{R}^{n}$ commuting with $\vartheta$ on $\mathcal{Q}$, i.e. $\vartheta \circ S = S \circ \vartheta$ for any $S \in \mathbb{S}$. Then for $\mathcal{U}^{i}$ and $\tau_{i}$ to respect the symmetry we must have for any $i \in \mathcal{E}$ and $S \in \mathbb{S}$ that there exists a unique $j$ such that $S\mathcal{U}^{i} = \mathcal{U}^{j}$ and $\tau_{i} = \tau_{j}$. This defines transformations (we keep the same notations) in the graph via $Si \coloneq j$. If for any edge $(i,j) \in \mathcal{E}$ we have $(Si,Sj) \in \mathcal{E}$, then $\mathbb{S}$ is a subgroup of $\operatorname{Aut}(G)$. 

Moreover, we can construct a metric respecting the symmetry from any metric induced by inner products. For simplicity suppose that $m=1$ and we a given with a metric defined via a family of symmetric positive-definite matrices $P_{q}$, where $q \in \mathcal{Q}$, as $\langle \cdot, P_{q} \cdot \rangle_{Euc}$, where $\langle \cdot, \cdot \rangle_{Euc}$ is the standard Euclidean inner product. Then the symmetric positive-definite matrices
\begin{equation}
	\label{EQ: AveragingMetricToRespectS}
	\overline{P}_{q} \coloneq \frac{1}{|\mathbb{S}|} \sum_{S \in \mathbb{S}} S^{T}P_{Sq} S
\end{equation}
define an $\mathbb{S}$-invariant metric over $\mathcal{Q}$.

So, if the symmetries are respected, it is not hard to see that we must have $\alpha^{+}_{f}(i) = \alpha^{+}_{f}(Si)$ for any $i \in \mathcal{V}$. Consequently, Proposition \ref{PROP: MaximalRelativeWeightViaQuotientGraph} is applicable to the symbolic image.

In Section \ref{SEC: RabinovichLargestULE}, the symmetry reduction is illustrated by means of a system with a $\mathbb{Z}_{2}$-symmetry.
\section{Optimization in smooth families of metrics via Iterative Nonlinear Programming}
\label{SEC: OptimizatiobViaINP}

In this section, we will show how the Relative Weights Optimization algorithm (see Section \ref{SEC: RobustAlgoEstimateULE}) can be synthesized with nonlinear programming to effectively resolve the optimization problem for growth exponents in smooth families of metrics.

Let us illustrate the idea by means of an abstract example. Suppose we have a family of real-valued functions $F_{\omega}$ defined on a set $\mathcal{X}$ (finite or infinite) and depending on some $r$-dimensional parameter $\omega \in \Omega \subset \mathbb{R}^{r}$ from a domain $\Omega$. We want to minimize in $\omega$ the maximum value of $F_{\omega}(x)$ over $x \in \mathcal{X}$, i.e. to resolve the optimization problem
\begin{equation}
	\label{EQ: MaximumMinimizationGeneral}
	\max_{x \in \mathcal{X}} F_{\omega}(x) \to \min_{\omega \in \Omega}.
\end{equation}

Even for nice $\mathcal{X}$ and $F_{\omega}$ this is a nonsmooth problem (e.g., take $F_{\omega}(x) = \omega x$ with $\mathcal{X} = [0,1]$ and $\Omega = [-1,1]$), but in some cases one can explore its convexity and develop subgradient optimization algorithms as it is done in \cite{KawanHafsteinGiesl2021, LouzeiroetAlAdaptedMetricsComp2022} for some cases of our interest. However, these results place essential restrictions which are unnatural for practice.

Here we suggest a heuristic algorithm to resolve general problems of the form \eqref{EQ: MaximumMinimizationGeneral} which in its abstract form can be described as follows. Choose an initial parameter $\omega_{0} \in \Omega$; a rectangular (small) neighborhood $\Omega_{bnd}$ of $0$ in $\mathbb{R}^{r}$; a set of $n_{x} \geq 1$ \textit{reference points} points $x_{1}, \ldots, x_{n_{x}} \in \mathcal{X}$ with $F_{\omega_{0}}(x_{1}) = \max_{x \in \mathcal{X}} F_{\omega_{0}}(x)$ and a real number $F_{bnd} < F_{\omega_{0}}(x_{1})$. Then resolve the problem
\begin{equation}
	\begin{split}
		&\max_{1 \leq j \leq n_{x}} F_{\omega}(x_{j}) \to \min_{\omega \in \omega_{0} + \Omega_{bnd}},\\
		&\max_{1 \leq j \leq n_{x}} F_{\omega}(x_{j}) \in [F_{bnd}, F_{\omega_{0}}(x_{1})]
	\end{split}
\end{equation}
which is equivalent to the problem of nonlinear programming with bounds and inequality constraints via introducing an extended parameter $\omega^{r+1} \in \mathbb{R}$. Namely, by putting
\begin{equation}
	\begin{split}
		&\overline{\omega} = (\omega, \omega^{r+1}) = (\omega^{1},\ldots,\omega^{r+1}) \in \mathbb{R}^{r+1},\\
		&\overline{\Omega} = (\omega_{0} + \Omega_{bnd} ) \times [F_{bnd}, F_{\omega_{0}}(x_{1})],
	\end{split}
\end{equation}
we get (note that we denote components of vectors using upper indices)
\begin{equation}
	\label{EQ: NonlinearProgrammingGeneral}
	\begin{split}
		&\omega^{r+1} \to \min_{\overline{\omega} \in \overline{\Omega}},\\
		&F_{\omega}(x_{j}) \leq \omega^{r+1}, \text{ for } j \in \{1,\ldots, n_{x}\}.
	\end{split}
\end{equation}
Under certain regularity of $F_{\omega}(x_{j})$ in $\omega$, the above problem can be solved by the Sequential Quadratic Programming (see below) leading to a parameter $\omega_{1} \in \omega_{0} + \Omega_{bnd}$, where at least a local minimum is achieved. Then we repeat this procedure starting from $\omega_{1}$ therefore resulting in $\omega_{2}$ on the second iteration and so on. We call this process the \textit{Iterative Nonlinear Programming}. Depending on the problem, one can also complement \eqref{EQ: NonlinearProgrammingGeneral} with some additional constraints guiding the optimization (see Section \ref{SEC: ComputationalTips}).

Clearly, effective and efficient applications of the above approach demand a clever use of reference points at each iteration to catch the evolution of shapes as $\omega$ varies (see Remark \ref{REM: ComputingMaximaShapeChanges}). For example, if we know that each $F_{\omega}(x)$ is a polynomial (of fixed degree) of one variable $x$, we can control the evolution just by taking reference points given by the roots of derivatives of polynomials and boundary points of $\mathcal{X}$. For general families, the situation is complicated by the evolving of shapes with parameter varyings. Here appropriate choices\footnote{It is not a priori clear that taking the bounds smaller is always helpful. Sometimes optimizing the objective function by a relatively large value on each iteration, although takes large time and do not lead to immediate improvements on next iterations, results in the better long term optimization, by the way, collecting smaller number of reference cycles.} of $\Omega_{bnd}$ and $F_{bnd}$ may also help to control the evolution. A general recipe may be to take local maxima and some reference points from previous iterations to the next one. Of course, more complex shape changes of $F_{\omega}$ (in practice, this is reflected in the dimension of the parameter $\omega$) require more reference points and make the optimization procedure more complicated.

\begin{remark}
	A significant feature of \eqref{EQ: NonlinearProgrammingGeneral} is that it allows to do precomputations before solving the problem. For example, if the computation of $F_{\omega}$ involves polynomials with parameters only in coefficients, one can precompute monomials for each $x_{j}$. Another case is when $F_{\omega}$ is a correction to some (independent of $\omega$) function, e.g. arising from certain interpolation or a previous optimization that got stuck at minimum. This significantly reduces the time complexity of implementations of the overall algorithm.
\end{remark}

Now we are going to describe the optimization problem in our case. Recall here a symbolic image $G=\{ \mathcal{V}, \mathcal{E} \}$ associated with the covering $\mathcal{U}$ and transition times $\tau = \{ \tau_{i} \}^{N}_{i=1}$ as in Section \ref{SEC: RobustAlgoEstimateULE}. We suppose that there is a family of metrics $\mathfrak{n} = \mathfrak{n}(\omega)$ on $T^{\wedge m}\mathcal{M}$ (at least) over $\mathcal{U} \cup \bigcup_{i \in \mathcal{V}} \vartheta^{\tau_{i}}(\mathcal{U}^{i})$ given by inner products smoothly depending on some $r$-dimensional parameter $\omega \in \Omega \subset \mathbb{R}^{r}$. For each $\omega \in \Omega$ and any $d \geq 0$ we associate the family of subadditive functions $f_{\omega} = \{ f^{t}_{\omega} \}_{t \in \mathbb{T}_{+}}$ given by \eqref{EQ: SubadditiveFamilySingVal} with $\mathfrak{n} = \mathfrak{n}(\omega)$, i.e. $f^{t}_{\omega}(q) = \omega^{(\mathfrak{n}(\omega))}_{d}(\Xi^{t}(q,\cdot))$, where $\omega^{(\mathfrak{n}(\omega))}_{d}$ is the function of singular values in the metric $\mathfrak{n}(\omega)$. For a cycle $\mathfrak{i}^{c}$ in $G$, recall the $\tau$-relative weight $W_{\tau}(\mathfrak{i}^{c};\omega) \coloneq W_{\tau}(\mathfrak{i}^{c};G;f_{\omega})$ of $\mathfrak{i}^{c}$ (see \eqref{EQ: RelativeWeightFinitePath}) computed w.r.t. the family $f_{\omega}$. 

Based on Proposition \ref{PROP: GraphPeriodicMeasure}, we want to resolve the optimization problem
\begin{equation}
	\label{EQ: BasicOptimizationProblemOverCycles}
	\max_{\mathfrak{i}^{c}}W_{\tau}(\mathfrak{i}^{c}; \omega) \to \min_{\omega \in \Omega},
\end{equation}
where the maximum is taken over all simple cycles, or the analogous problem for the quotient graph if some symmetries are presented (see Proposition \ref{PROP: MaximalRelativeWeightViaQuotientGraph}).

Analogously to the procedure above, a step of the algorithm is concerned with choosing an initial parameter $\omega_{0} \in \Omega$; a neighborhood $\Omega_{bnd}$ of $0$ in $\mathbb{R}^{r}$; a small value $W_{h} > 0$; a set of $n_{c} \geq 1$ \textit{reference simple cycles} $\mathfrak{i}^{c}_{1}, \ldots, \mathfrak{i}^{c}_{n_{c}}$ and a set of \textit{reference cycle points} $q_{k, j, s}$ defined for any $k \in \{1,\ldots,n_{c}\}$, $j \in \{1,\ldots, j_{k}\}$ (with some $j_{k} \geq 1$) and $s \in \{0,\ldots, T(\mathfrak{i}^{c}_{k})-1\}$ and satisfying
\begin{equation}
	\label{EQ: INPReferencePoints}
	q_{k, j, s} \in \mathcal{U}^{i_{k, s}}, \text{ where } \mathfrak{i}^{c}_{k} = (i_{k,0},\ldots, i_{k,T(\mathfrak{i}^{c}_{k})-1}).
\end{equation}
For a fixed $k$, we say that $q_{k, j, s}$ is a family over $j$ of reference points corresponding to $\mathfrak{i}^{c}_{k}$.

With such data we associate the \textit{relative reference weights}
\begin{equation}
	\label{EQ: RelWeightRefPoints}
	\widetilde{W}_{k,j}(\omega) \coloneq \frac{\sum_{s=0}^{T(\mathfrak{i}^{c}_{k})-1} \ln \omega^{(\mathfrak{n}(\omega))}_{d}(\Xi^{\tau_{i_{k,s}}}(q_{k,j,s}, \cdot))}{\sum_{s=0}^{T(\mathfrak{i}^{c}_{k})-1} \tau_{i_{k,s}}}
\end{equation}
and, analogously to \eqref{EQ: NonlinearProgrammingGeneral}, the corresponding nonlinear programming problem
\begin{equation}
	\label{EQ: NonlinearProgrammingCycles}
	\begin{split}
		&\omega^{r+1} \to \min_{\overline{\omega} \in \overline{\Omega}},\\
		&\widetilde{W}_{k,j}(\omega) \leq \omega^{r+1}, \text{ for } k \in \{1,\ldots,n_{c}\} \text{ and } j \in \{1,\ldots, j_{k}\},
	\end{split}
\end{equation}
where 
\begin{equation}
	\begin{split}
		&\overline{\omega} = (\omega, \omega^{r+1}) = (\omega^{1},\ldots,\omega^{r+1}) \in \mathbb{R}^{r+1},\\
		&\overline{\Omega} = (\omega_{0} + \Omega_{bnd} ) \times [W^{+}(\omega_{0}) - W_{h}, W^{+}(\omega_{0})]
	\end{split}
\end{equation}
with
\begin{equation}
	W^{+}(\omega_{0}) \coloneq \max_{k,j} \widetilde{W}_{k,j}(\omega_{0}).
\end{equation}

Before we start resolving the problem \eqref{EQ: NonlinearProgrammingCycles} on each step, there are preparatory computations (which we call \textit{updating procedure}) concerned with updating reference cycles and points as follows.
\begin{description}
	\item[\textbf{(UP1)}] Find a path\footnote{Remind that algorithms for finding maximizing paths are discussed around \eqref{EQ: RecurrenceMathPathEqualTimes}.} $\mathfrak{i} \in G_{t}$ of length $t$ maximizing $W_{\tau}(\mathfrak{i};\mathfrak{n}(\omega_{0}))$ for a fixed sufficiently large $t$ (e.g., $t=1000$) and extract from $\mathfrak{i}$ a short cycle $\mathfrak{i}^{c}$ of maximal multiplicity. Keep $\mathfrak{i}^{c}$ as a reference cycle;
	\item[\textbf{(UP2)}] For any reference cycle $\mathfrak{i}^{c}$, let $\mathfrak{i}^{c} = (i_{0}, i_{1}, \ldots,i_{t-1}) \in G_{t}$. Assuming each $\mathcal{U}^{i}$ is compact, there exist points $q_{0},\ldots,q_{t-1}$ such that
	\begin{equation}
		\label{EQ: UP2SingValue}
		\alpha^{+}_{f_{\omega_{0}}}(i_{s}) = \ln \omega^{(\mathfrak{n}(\omega_{0}))}_{d} \left(\Xi^{\tau_{i_{s}}}(q_{s},\cdot) \right) \text{ for } s \in \{0,\ldots, t-1\};
	\end{equation}
	Keep $q_{0},\ldots,q_{t-1}$ as reference points corresponding to $\mathfrak{i}^{c}$.
\end{description}

It may be also convenient to set limits on the maximal number of reference cycles and maximal size $N_{ref} \geq j_{k}$ of the families of reference points related to a given reference cycle $\mathfrak{i}^{c}_{k}$ (see Remark \ref{REM: ComputingMaximaShapeChanges}). In this case, if adding a new cycle or reference points makes us go beyond the limits, we replace the oldest ones by the new ones. Moreover, some old cycles with much smaller relative weights than the maximal one can be taken to be inactive for the current iteration.

There are two crucial experimental observations making the above approach effective (and efficient for appropriate programming realizations). Namely, 
\begin{enumerate}
	\item[1)] In \eqref{EQ: BasicOptimizationProblemOverCycles}, the maximum is achieved on a relatively short cycle (in our experiments the lengths almost never exceeded 50);
	\item[2)] As the algorithm proceeds, reference points become highly overlapped, both within different cycles and within the same cycles.
\end{enumerate}

Thus, based on item 2), the overall algorithm is concerned not just with the optimization of norms (singular values) at scattered points, but rather with their influence in the context of certain cycles in the symbolic image. This adds a guidance by dynamics to the optimization process.

We refer to Section \ref{SEC: ComputationalTips} for further discussions related to the Nonlinear Iterative Programming optimization.

Now let us discuss existing methods and problems concerned with resolution of \eqref{EQ: NonlinearProgrammingCycles}. 

First of all, we theoretically face the problem that \eqref{EQ: UP2SingValue} is not a differentiable function (of the parameter) in general. It is well-known that the arranged eigenvalues are globally Lipschitz continuous in the space of Hermitian matrices. Moreover, at points (matrices) where a chosen eigenvalue is simple, it depends analytically on the matrix. For families of matrices, this gives as much smoothness as the family has. In the absence of simplicity (which can be lost as parameters vary), we may rely only on local Lipschitzity.

For such possibly nonsmooth constrained optimization problems, F.E.~Curtis and M.L.~Overton \cite{CurtisOverton2012} developed a sequential quadratic programming (SQP) method with the search direction based on the gradient sampling (GS). They also established global convergence of the SQP-GS algorithm to minima with probability one. However, for problems with $r$ parameters it requires the evaluation of $O(r)$ gradients per step of the algorithm. This makes the approach inappropriate for problems like our \eqref{EQ: NonlinearProgrammingCycles}, although one can use it to compute weights in the symbolic image (this is also a generally nonsmooth problem).

A more efficient sequential quadratic programming approach is proposed by F.E.~Curtis, T.~Mitchell and M.L.~Overton in \cite{CurtisMitchellOverton2017}. It employs Broyden-Fletcher-Goldfarb-Shanno (BFGS) quasi-Newton Hessian approximations and exact penalty functions with parameters controlled by a steering strategy. Although the BFGS-SQP method does not have a guaranteed convergence theory, it gives effective and efficient results on a set of problems in \cite{CurtisMitchellOverton2017} outperforming existing methods in time-efficiency. This algorithm is implemented in GRANSO package for MATLAB and PyGRANSO package for Python.

In our experiments, we found that the sequential least-squares quadratic programming (SLSQP) method (see D.~Kraft \cite{Kraft1988}) implemented in Fortran and with an interface provided by SciPy package for Python, although being originally developed for smooth problems, significantly outperforms PyGRANSO in convergence and time complexity for the problems of our interest\footnote{This is also known from a comparison of SLSQP with many other existing methods for constrained optimization, where it shows a significant qualitative outperforming in complex-constrained problems. See, e.g., the paper of A.J.~Joshy and J.T.~Hwang \cite{JoshyHwang2024}.}. %For deep optimization stages with hundreds of distinct reference cycles, we could not wait for an answer even for the first iteration of PyGRANSO (both with PyTorch autogradients and custom numerical gradients) while SLSQP handles the overall process in $1-60$ minutes with a convergence result. Moreover, for initial iterates with small number of cycles, PyGRANSO provides an improvement of the objective function but does not terminate with a convergence result.

We plan to give details concerned with effective implementations of the algorithm on Python, design the corresponding programming package and provide related discussions in a forthcoming paper. In the present work, we give two illustrative examples in Sections \ref{SEC: HenonLPestimate} and \ref{SEC: RabinovichLargestULE} justifying that the method indeed can be used to reach global minima in both simple and complex optimization problems.
\section{Computation tips and regularizing optimization strategies}
\label{SEC: ComputationalTips}

For simple systems, which particularly have largest uniform Lyapunov exponents $\lambda_{1}(\Xi_{m})$ achieved at some equilibrium, one should not expect serious obstacles for a successful optimization. In Section \ref{SEC: HenonLPestimate}, this is illustrated by means of the H\'{e}non mapping with classical parameters, where only $20$ distinct reference cycles were collected until the success.

However, for systems which are expected to have periodic extreme points at which the Lyapunov exponent of our interest is realized, the optimization process may reach deep stages with hundreds or even thousands of distinct reference cycles because the shapes of adapted metrics may be relatively complex. In this situation, one should involve other heuristics to catch appropriate shapes.

Firstly, on the stage of Iterative Nonlinear Programming optimization, we do not need to compute symbolic images accurately. It is sufficient to take a few long pseudo-trajectories, say $q_{l}(t)$, and construct a graph with only edges $(i,j)$ such that $q_{l}(t) \in \mathcal{U}^{i}$ and $q_{l}(t + \tau_{i}) \in \mathcal{U}^{j}$ for some $t \geq 0$ and $l$. We call such graphs \textit{heuristic symbolic images}. If the pseudo-trajectories are sufficiently long and accurately computed, the graph will contain all relevant paths. Heuristic symbolic images may help to find appropriate parameters (such as the size of covering subsets, transition times, number of parameters) for optimization.

%Clearly, taking transition times larger makes more edges in the graph. Taking covering subsets smaller, although decreases degrees of particular vertices, highly increases the number of vertices. All this procedures slow down computations that limits possible smallness of covering subsets and largeness of transition times for constructing metrics via Iterative Nonlinear Programming optimization. However, for a particular metric one can sharpen covering subsets and take transition times larger to make several tests on different graphs. Here heuristic symbolic images may help to find appropriate parameters for problems of interest.

Next, the central problem is choosing a class of metrics. For complex problems, naive polynomial expressions, although providing improved estimates, may be not sufficient to catch proper asymptotics with computationally acceptable degrees of polynomials. In this regard, it is worth to use neural networks models or other multi-parametric families allowing approximations of different kinds. Here for choosing appropriate architectures, it is convenient to provide simple tests. One of such tests for a model is to fit data from Lyapunov-Floquet metrics (see Appendix \ref{SEC: LyapunovFloquetMetrics}). As more advanced tests, one can extract some periodic orbits $q_{l}(\cdot)$ from the attractor and try to optimize over the \textit{heuristic periodic symbolic image} constructed (as above) from the finite periodic skeleton.

\begin{remark}
	\label{REM: StabUnstableOrbits}
	For extracting unstable periodic orbits from chaotic attractors there exist various methods. 
	
	Firstly, there are approaches based on stabilization. Among them the method of delayed-feedback stabilization originated by K.~Pyragas \cite{Pyragas1992} stands out due to its simplicity (see N.V.~Kuznetsov, G.A.~Leonov and M.M.~Shumafov \cite{KuzLeoShum2015}). There exist various developments related to consideration of periodic feedback for continuous- and discrete-time systems (see G.A.~Leonov \cite{Leonov2015PeriodicPyragas}; G.A.~Leonov, K.A.~Zvyagintseva and O.A.~Kuznetsova \cite{LeoZvyagKuz2016}) or allowing adaptive searching for period and feedback gain estimates (see W.~Lin et al. \cite{LinMaFengChen2010}).
	
	Another approach is suggested by E.~Ott, C.~Grebogi and J.A.~Yorke in \cite{OttGrebogiYourke1990} and it is concerned with applying a controlling feedback at (discrete) return times to a Poincar\'{e} section to stabilize unstable periodic orbits.
	
	In \cite{BramburgerBruntonKutz2021}, J.J.~Bramburger, S.L.~Brunton and J.N.~Kutz developed deep learning algorithms for constructing approximate conjugacies between Poincar\'{e} mappings of certain high-dimensional models and explicit simple candidates. This may particularly help to extract unstable periodic orbits.
	
	However, the above methods often do not provide high-precision computation of unstable periodic orbits, rather giving a good guess for initial data. In particular, they may not allow to compute Floquet multipliers with sufficient accuracy (see Section \ref{SEC: RabinovichLargestULE}).
	
	More accurate methods are based on the Newton-Raphson method (see P.~Cvitanovi\'{c} et al. \cite{CvitanovicChaos2005}). Combining with rigorous ODE solvers, it may provide arbitrary precision computations of periodic orbits (see A.~Abad, R.~Barrio and A.~Dena \cite{AbadBarrioDena2011}). If one wishes to extract all cycles up to a given period, the Poincar\'{e} sections may be involved (see R.~Barrio, A.~Dena and W.~Tucker \cite{BarrioDenaTucker2015}). \qed
\end{remark}

\iffalse
\begin{figure}
	\includegraphics[width=1\textwidth]{}
	\caption{A finite periodic skeleton of the attractor from Fig.~\ref{Fig: RabinovichAttractorLocalization} consisting of 11 shortest periodic orbits (stabilized numerically via the delayed feedback). There are 7 distinct up to symmetry \eqref{EQ: RabinovichSymmetry} orbits and symmetric pairs are colored the same. Periods are estimated as $3.52324$ (red), $5.17587$ (orange), $6.79519$ (green), $6.92974$ (blue), $8.54965$ (black), $10.2165$ (cyan) and $11.7994$ (yellow).}
	\label{Fig: RabinovichAttractorSkeleton}
\end{figure}
\fi

Even with appropriate families of metrics, reaching global minima may be a complicated task. One of problems may be caused by a nonregularity related to optimization of relative reference weights \eqref{EQ: RelWeightRefPoints}. Namely, it often happens that optimization chooses the path of least resistance where some of individual weights are further increased (nevertheless lowering the associated relative weights). Since in the symbolic image there are many cycles containing a given set of vertices, this may result in catching new cycles, corresponding to reference points with increased individual weights, on several next iterations until the overlapping between reference cycles prevents the individual weights from increasing again.

To partially overcome the above problem, we suggest several regularizing optimization strategies concerned with complementing constraints in \eqref{EQ: NonlinearProgrammingCycles} by additional inequalities guiding the optimization. Thus, at the cost of growing time complexity for a single iteration, one may hope for qualitative and quantitative improvements of the overall optimization process.

\begin{description}[before=\let\makelabel\descriptionlabel]
	\item[\textbf{(IR)}\refstepcounter{desccount}\label{DESC: IR}] (Individual regularization) Complement constraints with inequalities for individual weights at reference points to not increase them during optimization:
	\begin{equation}
		\label{EQ: IndividualRegularization}
		\begin{split}
			\alpha^{+}_{f_{\omega}}(k,j,s) \leq \alpha^{+}_{f_{\omega_{0}}}(k,j,s) + \varepsilon,\\
			\alpha^{+}_{f_{\omega}}(k,j,s)  \coloneq \ln \omega^{(\mathfrak{n}(\omega))}_{d}(\Xi^{\tau_{i_{k,s}}}(q_{k,j,s}, \cdot))
		\end{split}
	\end{equation}
	where the indices are as in \eqref{EQ: RelWeightRefPoints} and $\varepsilon > 0$ is a small number.
\end{description}
This strategy, if successful, significantly regularizes the optimization process by lowering the number of distinct reference cycles collected per iteration. However, with many reference points or when we are close to the optimal value, it may lead to stucking at local minima which are not local minima for the original problem, especially for $\varepsilon = 0$. In this case, one may try to remove the reference cycles and points and proceed with empty data or to increase $\varepsilon$. This may give only small improvements, in which case it is worth to change the optimization strategy (keeping the possibility to return later).

It seems that \nameref{DESC: IR} is best suited for starting the optimization; in particular, if we wish to transit from a given symbolic image to a finer one with preserving optimized parameters. However, with sufficiently small $\varepsilon$, it will probably not allow to reach global minima for somewhat complex problems.

\iffalse
\begin{description}[before=\let\makelabel\descriptionlabel]
	\item[\textbf{(RIR)}\refstepcounter{desccount}\label{DESC: RIR}] (Restricted individual regularization) Choose $\alpha^{+}_{bnd}(k,s) \in \mathbb{R}$ and complement constraints with \eqref{EQ: IndividualRegularization} taken over all tuples $(k,j,s)$ satisfying $\alpha^{+}_{f_{\omega_{0}}}(k,j,s) \geq \alpha^{+}_{bnd}(k,s)$.
\end{description}
This strategy allows to combine the optimization of relative reference weights with lowering (or at least nonincreasing) sufficiently large individual weights. Note that if
\begin{equation}
	\alpha^{+}_{bnd}(k,s) > \tau_{i_{k,s}} \lambda_{f} = \tau_{i_{k,s}}\overline{\omega}_{d}(\Xi;\mathcal{K}) \text{ for all } k \text{ and } s,
\end{equation}
then \nameref{DESC: RIR} is theoretically not restrictive.
\fi

\begin{description}[before=\let\makelabel\descriptionlabel]
	\item[\textbf{(CR)}\refstepcounter{desccount}\label{DESC: CR}] (Cycles regularization) Complement constraints with $\widetilde{W}_{k,j}(\omega) \leq \widetilde{W}_{k,j}(\omega_{0}) + \varepsilon$ (see \eqref{EQ: RelWeightRefPoints}) for some $\varepsilon > 0$.
\end{description}
Even with $\varepsilon=0$, this strategy seems as a nice general complement to the constraints. It prevents all relative reference weights to be increased during the optimization. This adds qualitative improvements of collected cycles. Our experiments indicated that the strategy may also lead to stucking at irrelevant local minima as in the case of \nameref{DESC: IR} with $\varepsilon=0$, but it happens much more closer to the global minimum. Note that the success of optimization discussed in Section \ref{SEC: RabinovichLargestULE} is significantly based on using this strategy.

%Note also that all the strategies do not increase complexity of computing constraints, but only increase their number. So they affect only complexity of algorithms resolving the associated nonlinear programming problems.

\begin{remark}
	\label{REM: ComputingMaximaShapeChanges}
	In our experiments, to compute weights in symbolic images during Iterative Nonlinear Programming optimization, where all covering elements $\mathcal{U}^{j}$ are cubes in $\mathbb{R}^{n}$, we use the SLSQP method from SciPy package for Python to resolve the related maximization problem taking the cube center as an initial point. 
	
	Sometimes, especially when the family of metrics is complicated, this may lead to finding only local minima at some cubes\footnote{A natural mechanism (which was particularly meant under the name ``evolution of shapes'' above) causing this is as follows. Suppose for some parameter we have a unique maximum at some $q$. Then small changes of parameter may cause $q$ to turn into a local minima with appearance of several local maxima in its neighborhood.}. Nevertheless, for the Iterative Nonlinear Programming optimization it is not worth to try obtaining global minima by taking more points as initial due to a significant increase in time complexity. It is better to allow for each reference cycle to have several (up to some $N_{ref} \geq j_{k} \geq 1$) families of reference points (see \eqref{EQ: INPReferencePoints}) iteratively collected as such local minima. As parameters vary, this helps catching relevant extrema and not to lose them during at least $N_{ref}$ iterates. Since reference points are highly overlapped, this adds only a relatively small increase in time complexity. In our experiments, we took $N_{ref} = 10$ and this turned out to be sufficient for a success of the optimization. Note that this approach can be successful only if smallness of the covering subsets respects the number of parameters. \qed
\end{remark}
\begin{remark}
	Of course, accurate computations of symbolic images and applications of the Relative Weights Optimization for a particular adapted metric demand more accurate finding of global minima in each covering element. This requires using batches of points.
	
	%For this purpose, \href{https://gitlab.com/RhydianL/MultiSLSQP}{MultiSLSQP} package (see Rhydian~L. \cite{Rhydian2023MultiSLSQP}) extends the SciPy implementation of SLSQP to support several initial points for simultaneous processing. In this regard, in the case of ODEs, it is worth to also use ODE solvers allowing simultaneous integration for batches of initial data (see Remark \ref{REM: NumericalIntegrationODEs}). However, even rigorous computation of an analog of the heuristic graph $G=G^{h}_{0.001, 3}$ from Section \ref{SEC: RabinovichLargestULE} demands the use of supercomputers.
\end{remark}
\begin{remark}
	\label{REM: OurDevice}
	All our numerical computations reported below were performed on CPU AMD Ryzen 5 5600 OEM with 32GB 3200 MHz RAM, GPU Palit GeForce RTX 4060 Dual and within double-precision floating-point arithmetic.
\end{remark}
\section{H\'{e}non mapping}
\label{SEC: HenonLPestimate}

Here we are going to illustrate the theoretical approach by means of the H\'{e}non mapping\footnote{In the standard form the mapping is given by $(x,y) \mapsto (1+y-ax^{2},bx)$. Clearly, it is conjugated with \eqref{EQ: HennonLeonovForm} via the change of variables $h(x,y)=(ax,ay/b)$.}
\begin{equation}
	\label{EQ: HennonLeonovForm}
	\mathbb{R}^{2} \ni (x,y) \overset{\vartheta}{\mapsto} ( a + b y - x^2, x ) \in \mathbb{R}^{2}
\end{equation}
with the parameters $a=1.4$, $b=0.3$ corresponding to the classical H\'{e}non attractor (see M.A.~H\'{e}non \cite{Henon1976}). Let us however state some general facts about \eqref{EQ: HennonLeonovForm}.

It is easily seen that there are two stationary states $q^{\pm} = (x_{\pm},x_{\pm})$, where
\begin{equation}
	x_{\pm} = \frac{1}{2}\left(b - 1 \pm \sqrt{ (b-1)^{2} + 4 a}\right).
\end{equation}

Note that there may exist points going to infinity under iterations of \eqref{EQ: HennonLeonovForm}. However, the nonwandering set can be trapped into the square (see R.~Devaney and Z.~Nitecki \cite{DevaneyNitecki1979})
\begin{equation}
	\label{EQ: BoundHennonNW}
	\begin{split}
		\mathcal{S}_{R} = \{ (x,y) \in \mathbb{R}^{2} \ | \ |x| \leq R, |y| \leq R \}, \text{ where }\\
		R = \frac{1}{2} \left( 1 + |b| + \sqrt{ (1 + |b|)^{2} + 4 a } \right).
	\end{split}
\end{equation}
For $a=1.4$, $b=0.3$ this gives $R = 1.96244\ldots \leq 2$.

Moreover, for $a=1.4$ and $b=0.3$, M.A.~H\'{e}non \cite{Henon1976} found a positively invariant region $\mathcal{S}$ avoiding the negative stationary state $q^{-}$. Its boundary is given by the quadrilateral with vertices
\begin{equation}
	\label{EQ: HennonQuadrilateral}
	\begin{split}
		&(x_{A}, y_{A}) = \left(-1.33 \cdot a, 0.42 \cdot \frac{a}{b}\right), \ (x_{B}, y_{B}) = \left(1.32 \cdot a, 0.133 \cdot \frac{a}{b}\right), \\
		&(x_{C},y_{C}) = \left(1.245 \cdot a, -0.14 \cdot \frac{a}{b} \right), \ (x_{D},y_{D}) = \left(-1.06 \cdot a, -0.5 \cdot \frac{a}{b} \right).
	\end{split}
\end{equation}

Consequently, there is an attractor in $\mathcal{S}$ given by $\mathcal{A} = \bigcap_{s \in \mathbb{T}_{+}} \vartheta^{s}(\mathcal{S})$. It is however not rigorously established that $\mathcal{A}$ is a chaotic attractor, although there are results indicating this. In particular, in the formation of $\mathcal{A}$ main roles are played by homoclinic intersections and tangencies associated with stable and unstable manifolds of $q^{+}$.

In \cite{Newhouseetal2008}, S.~Newhouse et al. obtained a lower bound for the topological entropy of \eqref{EQ: HennonLeonovForm} with $a=1.4$ and $b=0.3$ as
\begin{equation}
	\label{EQ: TopologicalEntropyHenonBelow}
	h_{\operatorname{top}}(\vartheta;\mathcal{A}) \geq 0.46469.
\end{equation}
Their method is based on finding subshifts carrying large entropy (in accordance with the celebrated A.~Katok result) embedded into $(\mathcal{A},\vartheta)$. This is achieved via computing transversal homoclinic intersections concerned with the positive stationary state $q^{+}$. Then one can extract a certain structure (called a trellis) associated with such intersections which gives rise to a topological Markov chain bounding the entropy from below. Related computations are done with the aid of computer algorithms which are robust and allow rigorous control of errors.

Note that \eqref{EQ: TopologicalEntropyHenonBelow} agrees well with the expected numerical evaluation $h_{\operatorname{top}}(\vartheta;\mathcal{A}) \approx 0.4651$ done by G.~D'Alessandro et al. \cite{Alessandroelal1990} using a long sequence of homoclinic tangencies and the associated grammars leading to Markov-like partitions. We relate such a relevance of the lower bound \eqref{EQ: TopologicalEntropyHenonBelow} to the lower-semicontinuity of the topological entropy in the space of $C^{r}$-surface mappings with any $r > 1$ \cite{Newhouse1989ContEnt}.

\begin{remark}
	Regardless of the lower-semicontinuity, there is a transient chaos in \eqref{EQ: HennonLeonovForm} with parameters close to the classical ones. Namely, for $a=1.39945219$ and $b=0.3$, the attractor of \eqref{EQ: HennonLeonovForm} is a period-13 cycle (see p.~78 and Exercise 6.3 in P.~Cvitanovi\'{c} et al. \cite{CvitanovicChaos2005}), although typical trajectories for a long time (about $10^{4}$ iterates) resemble the classical H\'{e}non attractor. So, all the entropy vanishes by a small perturbation of $a$.
\end{remark}

There is a general upper estimate $h_{\operatorname{top}}(\vartheta;\mathcal{K}) \leq n \ln d$ for a compact invariant set $\mathcal{K}$ of a polynomial mapping $\vartheta$ of degree $d$ in $\mathbb{R}^{n}$ due to M.~Gromov. In \cite{Newhouse1988EntropyVol}, S.~Newhouse sharpened this estimate to $h_{\operatorname{top}}(\vartheta;\mathcal{K}) \leq (n-1) \ln d$ provided that $\vartheta$ is injective in a neighborhood of $\mathcal{K}$ (see Theorem 4 therein). For \eqref{EQ: HennonLeonovForm} this gives
\begin{equation}
	\label{EQ: HenonNewhouseAboveEstimate}
	h_{\operatorname{top}}(\vartheta;\mathcal{A}) \leq \ln 2 = 0.6931\ldots
\end{equation}

Now we are going to our approach. Since \eqref{EQ: HennonLeonovForm} contracts areas, \eqref{EQ: TopologicalEntropyViaUnExps} gives
\begin{equation}
	\label{EQ: EntropyHenonViaLyapExp}
	h_{\operatorname{top}}(\vartheta;\mathcal{A}) \leq \lambda_{1}(\Xi;\mathcal{A}).
\end{equation}

There are previous investigations related to the estimate \eqref{EQ: EntropyHenonViaLyapExp}. Namely, G.A.~Leonov \cite{LeonovForHenonLor2001} constructed a metric with the aid of Lyapunov-like functions which can be used to show that (see, for example, Theorem 16 in A.~Matveev and A.~Pogromsky \cite{MatveevPogromsky2016})
\begin{equation}
	\lambda_{1}(\Xi;\mathcal{A}) \leq \lambda_{1}(\Xi;q^{-}) = \ln\left( \sqrt{x^{2}_{-} + b} - x_{-}\right)
\end{equation}
which for $a=1.4$ and $b = 0.3$ gives $\lambda_{1}(\Xi;\mathcal{A}) \leq 1.1816\ldots$. However, $q^{-}$ does not belong to $\mathcal{A}$ and it is much more unstable comparing with $q^{+}$.

In \cite{KawanHafsteinGiesl2021}, C.~Kawan, S.~Hafstein and P.~Giesl developed a projected subgradient method for the resolution of related optimization problems in the conformal class of a constant metric (inspired by the method of G.A.~Leonov \cite{Kuznetsov2016}). This allowed them to construct a metric giving the estimate for $a=1.4$ and $b=0.3$ (see Section 6.1 in \cite{KawanHafsteinGiesl2021})
\begin{equation}
	\label{EQ: HenonEntEstKawan1}
	\lambda_{1}(\Xi;\mathcal{A}) \leq 0.9907\ldots
\end{equation}
In \cite{LouzeiroetAlAdaptedMetricsComp2022}, M.~Louzeiro et al. by the same method (but with different step choices) improved the estimate to (see Section 5.2 therein)
\begin{equation}
	\label{EQ: HenonEntEstKawan2}
		\lambda_{1}(\Xi;\mathcal{A}) \leq 0.9045\ldots
\end{equation}

We are aimed to justify for $a=1.4$ and $b=0.3$ that
\begin{equation}
	\label{EQ: HenonLargestLPConjecture}
	\lambda_{1}(\Xi;\mathcal{A}) = \lambda_{1}(\Xi;q^{+}) = \ln \left( \sqrt{x^{2}_{+} + b} + x_{+}\right) = 0.6542706\ldots
\end{equation}
This improves all the estimates, including \eqref{EQ: HenonNewhouseAboveEstimate}, and gives the maximum that can be achieved via \eqref{EQ: EntropyHenonViaLyapExp}. More precisely, we will provide a robust upper bound for $\lambda_{1}(\Xi;\mathcal{A}) \geq \lambda_{1}(\Xi;q^{+})$ which is close to $\lambda_{1}(\Xi;q^{+})$ by constructing an adapted metric $\mathfrak{n}$ on $T \mathbb{R}^{2} \cong \mathbb{R}^{2}$ via the Iterative Nonlinear Programming discussed in Section \ref{SEC: OptimizatiobViaINP} (see Tab.~\ref{TAB: HenonMetricValues}) and applying the Relative Weights Optimization for finding maximal paths of increased lengths providing upper estimates for $\lambda_{1}(\Xi;\mathcal{A})$ (see Tab.~\ref{TAB: HenonEstimatesPathLengths}).

Let us firstly describe nuances related to the Relative Weights Optimization algorithm.

For step \nameref{DESC: GOA1}, let us take $\mathcal{U}_{0} = \mathcal{S} \cap \mathcal{S}_{2}$ which contains\footnote{Rigorously speaking, this (experimentally evident) statement requires to show that every point from $\mathcal{A}$ is nonwandering and this may be a complicated task. However, what we really need for our theory is what at least one of extreme points for $\lambda_{1}(\Xi;\mathcal{A})$ lies in the nonwandering set that is true since there always exist extreme points which are recurrent. This can be seen in a purely topological context (see Corollary A.8 in \cite{Morris2013}) or via ergodic theory machinery (see Theorem A.3 in \cite{Morris2013}).} $\mathcal{A}$ as it is discussed around \eqref{EQ: BoundHennonNW} and \eqref{EQ: HennonQuadrilateral}. We fix $\varepsilon>0$ and cover $\mathcal{S}_{2} = [-2,2]^{2}$ by the union of cubes
\begin{equation}
	\label{EQ: HenonCubesPartition}
	\mathcal{Q}_{k,l} = [-2, -2 + \varepsilon]^{2} + \varepsilon \cdot (k, l)
\end{equation}
where $k,l \in \{0,\ldots, N_{\varepsilon} \}$ and $N_{\varepsilon} = \lceil 4/\varepsilon \rceil$.
According to step \nameref{DESC: GOA2}, as a preparatory covering of $\mathcal{U}_{0}$ we take only cubes which intersect the quadrilateral $\mathcal{S}$.

To identify intersections and construct $G_{0}$, we consider a uniform grid of points on each cube by dividing its sides by $N_{g} = 100$. Moreover, we found it convenient to work with the second iteration $\vartheta^{2}$ of \eqref{EQ: HennonLeonovForm} (and equal transition times $\tau_{i} = 1$) because there may be cubes close to the negative stationary state $q^{-}$ having self-intersections under just the first iteration. This leads to carrying wrong asymptotics around $q^{-}$ which does not belong to $\mathcal{A}$.

For each $\mathcal{Q}_{k,l}$, we compute the maximum\footnote{This is done as in Remark \ref{REM: ComputingMaximaShapeChanges}. Even if there are several extrema at some cubes and some $\alpha_{k,l}$ are accidentally computed as local maxima by the method, we do not expect this to cause any differences for the reported results.}, say $\alpha_{k,l}$, over $q \in \mathcal{Q}_{k,l}$ of the largest singular value of the differential $D_{q}\vartheta^{2}$ in the standard Euclidean metric. For each point $p$ in the grid we identify $(k_{0},l_{0})$ such that $\vartheta^{2}(p) \in \mathcal{Q}_{k_{0},l_{0}}$ and register the corresponding intersection (i.e. we add the edge to the graph). Moreover, we register a rough intersection with any adjacent to $\mathcal{Q}_{k_{0},l_{0}}$ cube to which $\vartheta^{2}(p)$ is close up to $2 \alpha_{k,l} \varepsilon/{N_{g}}$. In other words, we let $p$ carry a disk of radius $\sqrt{2}\varepsilon/N_{g}$ around it (which contains the cube  $p + [0, \frac{\varepsilon}{N_{g}}]^{2}$) and assume the worst situation that it is uniformly stretched under $\vartheta^{2}$ with the maximal coefficient $\alpha_{k,l}$. Then the estimate $2 \alpha_{k,l} \varepsilon/{N_{g}}$ also carries possible round-off errors.

Therefore we compute a preparatory graph $G_{0}$ according to step \nameref{DESC: GOA3}. Then we remove sinks and sources from it, getting a more relevant localization of the attractor $\mathcal{A}$ given by the refined covering $\mathcal{U} = \bigcup_{j=1}^{N}\mathcal{U}_{j}$ by the remained cubes $\mathcal{U}_{j}$ according to step \nameref{DESC: GOA4}. Examples of such refined coverings obtained for $\varepsilon = 0.1$ and $\varepsilon = 0.05$ are shown in Fig. \ref{Fig: ExampleHenonCovering}.

\begin{figure}
	\begin{minipage}{.5\textwidth}
		\includegraphics[width=\textwidth]{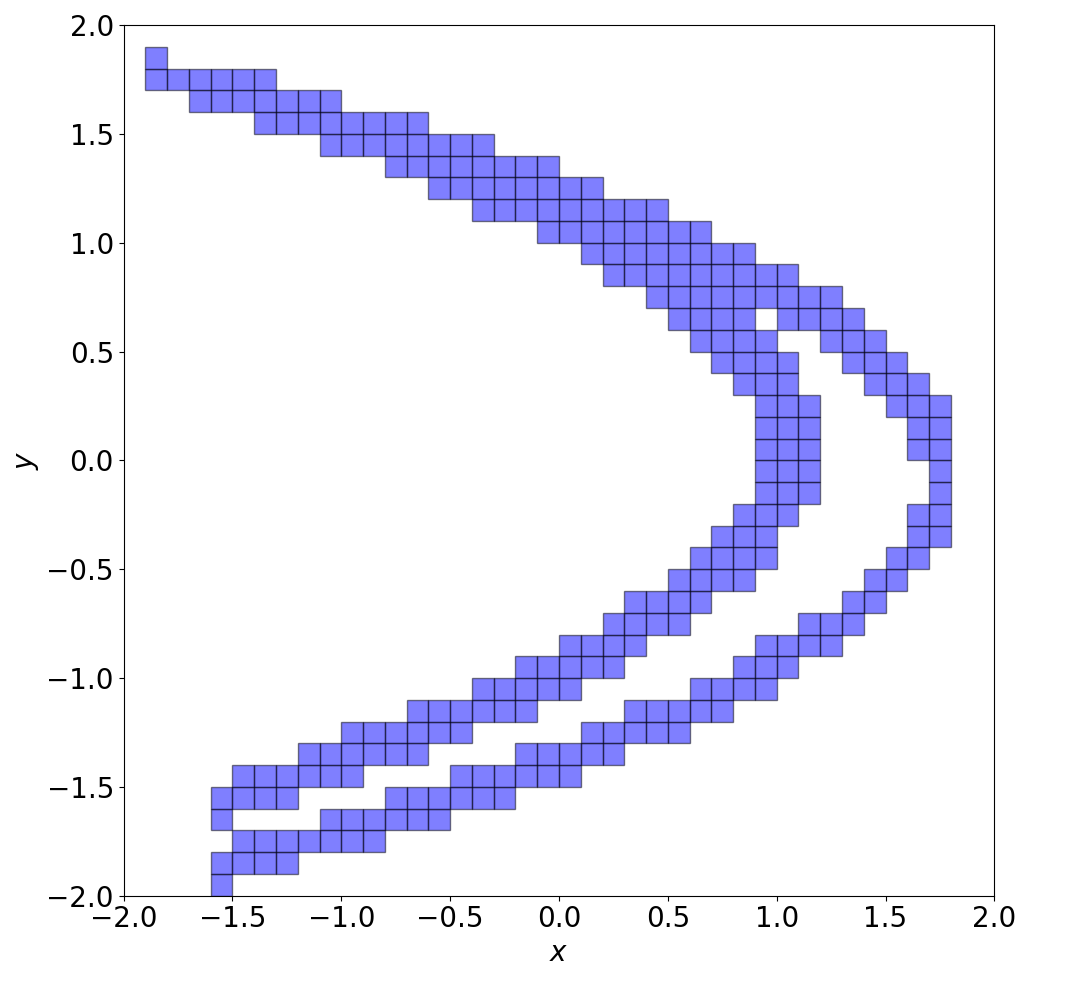}
	\end{minipage}%
	\begin{minipage}{.514\textwidth}
		\includegraphics[width=\textwidth]{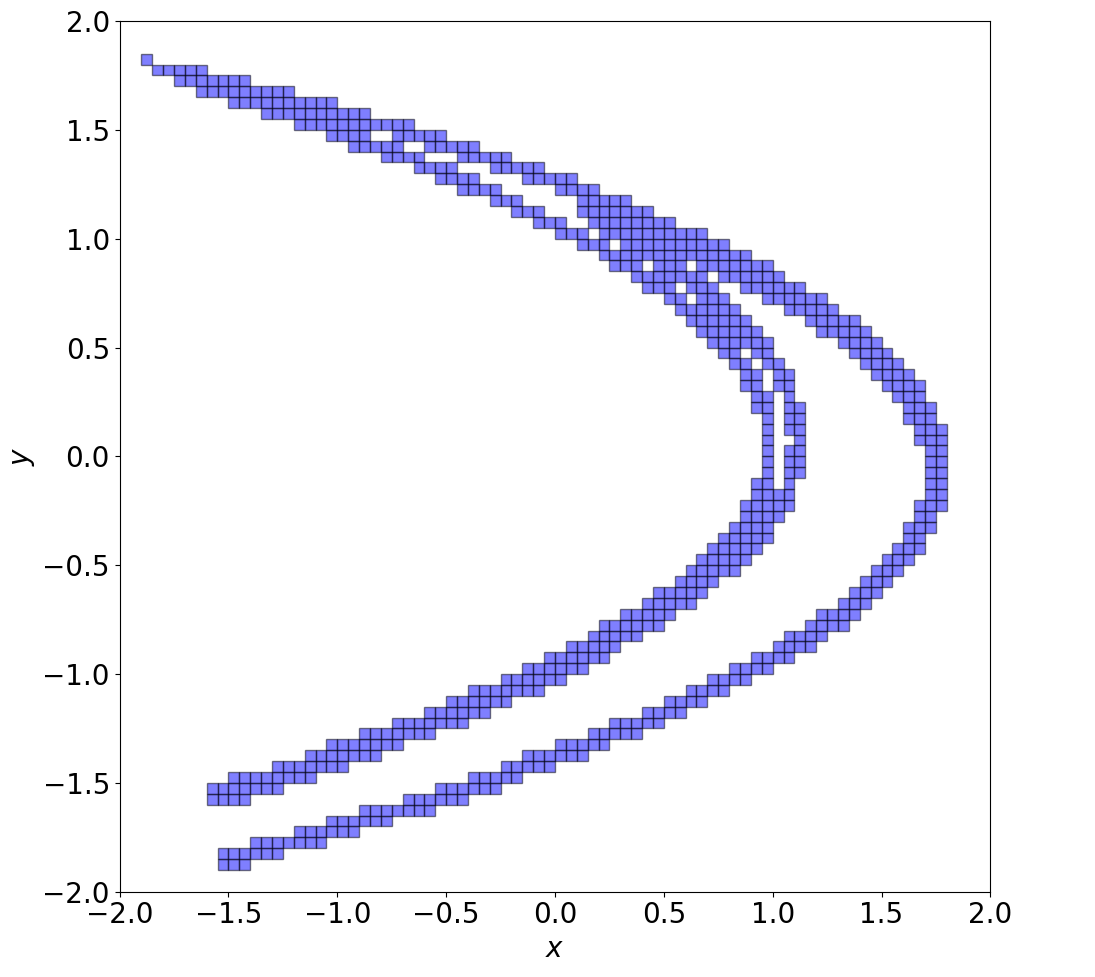}
	\end{minipage}%	
	\caption{Examples of refined coverings for $\varepsilon = 0.1$ (left) and $\varepsilon = 0.05$ (right).}
	\label{Fig: ExampleHenonCovering}
\end{figure}

Let $\langle \cdot, \cdot \rangle_{\operatorname{Euc}}$ be the standard Euclidean inner product in $\mathbb{R}^{2}$. We consider a family of inner products $\langle \cdot, \cdot\rangle_{q}$ in $T_{q}\mathbb{R}^{2}$ over $q \in \mathcal{U}$. After natural identification of tangent spaces with $\mathbb{R}^{2}$, $\langle \cdot, \cdot\rangle_{q}$ is given by
\begin{equation}
	\label{EQ: HenonGeneralAbstractMetric}
	\langle \xi_{1}, \xi_{2}\rangle_{q} \coloneq \langle \xi_{1} , P_{q} \xi_{2}\rangle_{\operatorname{Euc}},
\end{equation}
where $q = (x,y) \in \mathcal{U}$, $P_{q}$ is a real symmetric positive-definite matrix and $\xi_{1}, \xi_{2} \in T_{q} \mathbb{R}^{2} = \mathbb{R}^{2}$.

Let $\mathfrak{m}$ be the metric (a family of norms) associated with \eqref{EQ: HenonGeneralAbstractMetric}. Consider the derivative $\Xi^{2}(q,\cdot) = D_{q}\vartheta^{2}$ of $\vartheta^{2}$ at $q=(x,y)$. It is not hard to see that it is given by
\begin{equation}
	\Xi^{2}(q,\cdot) = \begin{pmatrix}
		&b + 4x (a + b y - x^{2}) \ &&-2b (a + b y - x^{2}) \\
		&-2 x \ &&b
	\end{pmatrix}.
\end{equation}
Then it is well-known that the singular values of $\Xi^{2}(q,\cdot)$ w.r.t. $\mathfrak{m}$ are given by the singular values of the matrix (see Remark \ref{REM: SingularValuesComputation})
\begin{equation}
	\label{EQ: HenonMaximizedGrowthAsSingularValue}
	\sqrt{P_{\vartheta^{2}(q)}} \Xi^{2}(q,\cdot) \sqrt{P_{q}}^{-1}.
\end{equation}

Let us start with the family $f^{t}(q) = \ln \| \Xi^{2 t}(q,\cdot) \|_{\mathfrak{m}}$. According to \nameref{DESC: GOA5}, the weights $\alpha^{+}_{f}(j)$ in the symbolic image are given by maximizing $\alpha^{+}_{\mathfrak{m}}(q) = f^{1}(q)$ over $q \in \mathcal{U}^{j}$.

As an illustration to the last step \nameref{DESC: GOA6}, let us choose $P_{q}$ in \eqref{EQ: HenonGeneralAbstractMetric} as the identity matrix. Then for $\varepsilon = 0.01$, the estimate via a maximal path of length $t=10^{6}$ gives (we shall not discuss details) the estimate $\lambda_{1}(\Xi;\mathcal{A}) \leq 0.74309\ldots\ldots$ which already improves \eqref{EQ: HenonEntEstKawan2}.

Next, we apply the Iterative Nonlinear Programming optimization (see around \eqref{EQ: NonlinearProgrammingCycles}) to find a metric $P_{q}$ of the form
\begin{equation}
	\label{EQ: HenonMetricGeneralForm}
	P_{q} = e^{V(x,y)} \cdot \left[\begin{pmatrix}
		a_{11}(x,y) \ a_{12}(x,y)\\
		a_{12}(x,y) \ a_{22}(x,y)
	\end{pmatrix}^{2} + \begin{pmatrix}
	1 \ 0\\
	0 \ 1
	\end{pmatrix} \right],
\end{equation}
where $a_{11}$, $a_{12}$, $a_{22}$ and $V$ are real-valued polynomials with degrees $1$ for the matrix entries and $5$ for $V$. Coefficients of the polynomials are taken as parameters. On each iteration, variations of the coefficients $\omega_{j}$ corresponding to monomials with degree $k$ were bounded by the interval $[-0.025 \cdot 2^{k}, 0.025 \cdot 2^{k}]$ (therefore determining $\Omega_{bnd}$) and variations of $\omega^{r+1}$ were bounded by $W_{h} = 0.005$ (see \eqref{EQ: NonlinearProgrammingCycles}). For each reference cycle we allowed to have up to $j_{k} \leq 10$ families of reference points. Optimization was performed on the symbolic image computed for $\varepsilon = 0.01$.

\begin{table}
	\begin{minipage}{1.\linewidth}
		%\caption{}
		\centering
		\begin{tabular}{||c | c||} 
			\hline
			\text{Polynomial} & \text{Values} \\ [1.ex] 
			\hline\hline
			%0.53 & 3.19 & 3.48 & 18.41 & 5.14 \\
			%\hline
			$a_{11}(x,y)$ & $0.45416294331290125 + 0.020513510515392696y$\\
			&$+0.5825112360363504x$ \\
			\hline
			$a_{12}(x,y)$ & $-0.004713193356337495 + 0.2922817455833501y$\\ &$-0.007977225865509469x$ \\
			\hline
			$a_{22}(x,y)$ & $0.0676334142588755 -0.030301622546198646 y$\\ 
			&$+ 0.015984077263301436 x$ \\ 
			\hline
			$V(x,y)$ &$0.026333660960733175 y - 0.03801620581510509 y^2$\\
			&$+ 0.08406821440555853 y^3 - 0.0434679060595814 y^4$\\ 
			&$+ 0.06314183074749603 y^5 + 0.011864879010530308 x$\\
			&$+ 0.045124703949557816 xy - 0.06153909040839601 xy^2$\\ 
			&$+ 0.09637541819232207 xy^3 - 0.06348892062303364 xy^4$\\ 
			&$+ 0.019348395756412246 x^2 + 0.039301442261707224 x^2y$\\ 
			&$- 0.07968948274531053 x^2 y^2 + 0.11255056942758461 x^2 y^3$\\ 
			&$+ 0.03232142219661241 x^3 + 0.021643240656063066 x^3y$\\ 
			&$- 0.08000812774170378 x^3y^2 + 0.02115259866492413 x^4$\\ 
			&$+ 0.025994133897055023 x^4 y + 0.014034588529305288 x^5$ \\
			\hline
		\end{tabular}
	\end{minipage}%
	\caption{Polynomials defining the metric in \eqref{EQ: HenonMetricGeneralForm} as a result of the Iterative Nonlinear Programming optimization.}
	\label{TAB: HenonMetricValues}
\end{table}

It took 50 iterations of the method to converge providing an estimate close to the expected optimal value at $q^{+}$ (see below). On our device (see Remark \ref{REM: OurDevice}) searching for maximal paths of length $t=1000$ took about $3$ seconds per iteration. Recomputing weights in the graph took about $5$ seconds per iteration. Resolving the nonlinear programming problems via SLSQP from SciPy took $0.1$ seconds per iteration and showed convergence results. Resulted polynomials are collected in Tab.~\ref{TAB: HenonMetricValues}. Fig. \ref{Fig: HenonIterativeNonlinearProgrammingOptimizationWeights} illustrates convergence of the optimization procedure and Fig. \ref{Fig: HenonIterativeNonlinearProgrammingOptimizationDataGrowth} shows growing of the corresponding data. In particular, at the end, the algorithm collected only 20 distinct cycles. For 655 collected reference points (counted with multiplicities) there were only 127 distinct points. This justifies the high overlapping discussed in Section \ref{SEC: OptimizatiobViaINP} even for such small data.

\begin{figure}
	\includegraphics[width=\textwidth]{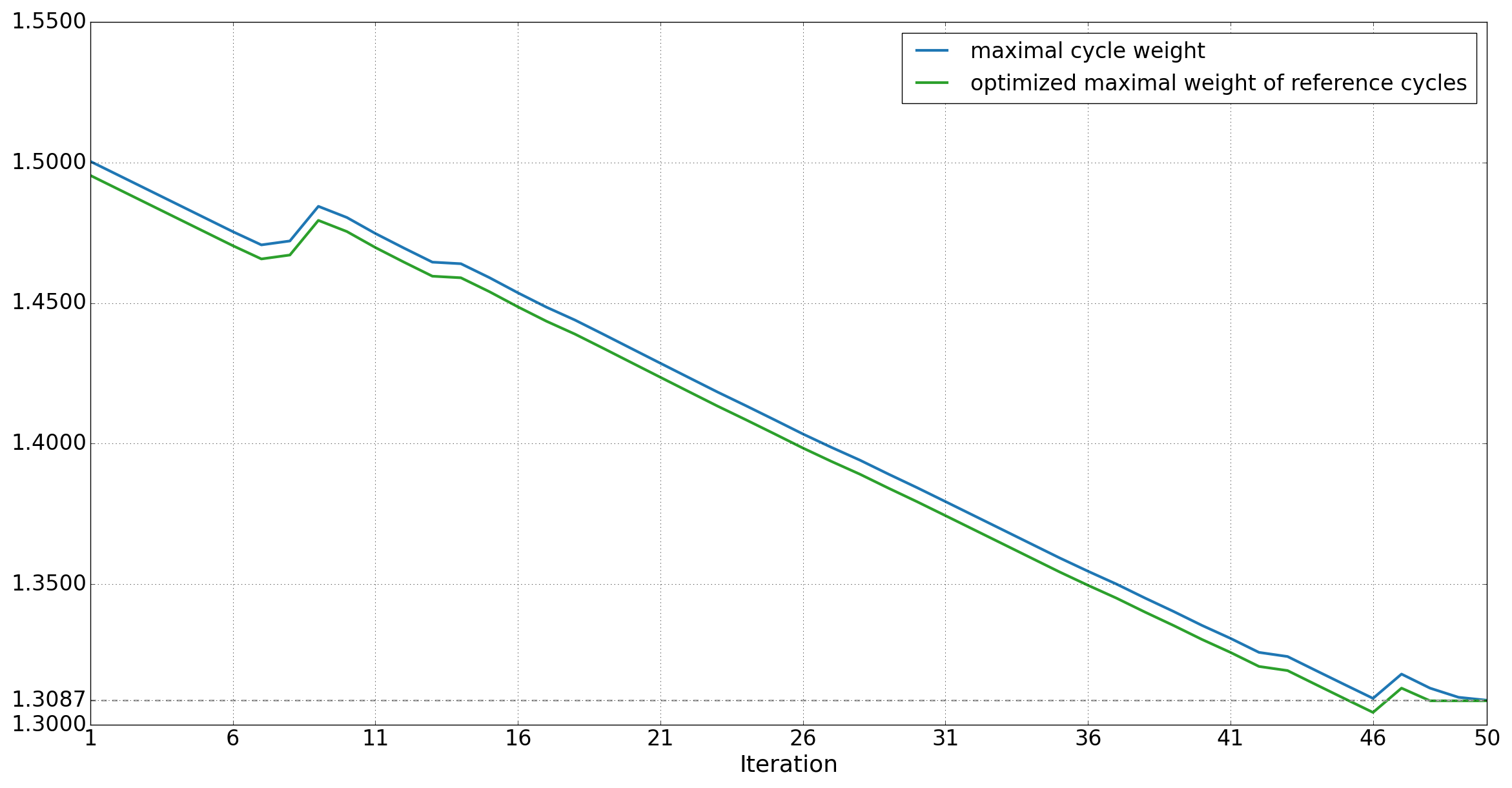}
	\caption{Graphs of the maximal cycle weight (extracted from the maximal path of length $1000$; blue) and the optimized maximal weight over reference cycles w.r.t. reference points (green) versus iteration of the Iterative Nonlinear Programming optimization described below \eqref{EQ: HenonMetricGeneralForm}.}
	\label{Fig: HenonIterativeNonlinearProgrammingOptimizationWeights}
\end{figure}

\begin{figure}
	\includegraphics[width=\textwidth]{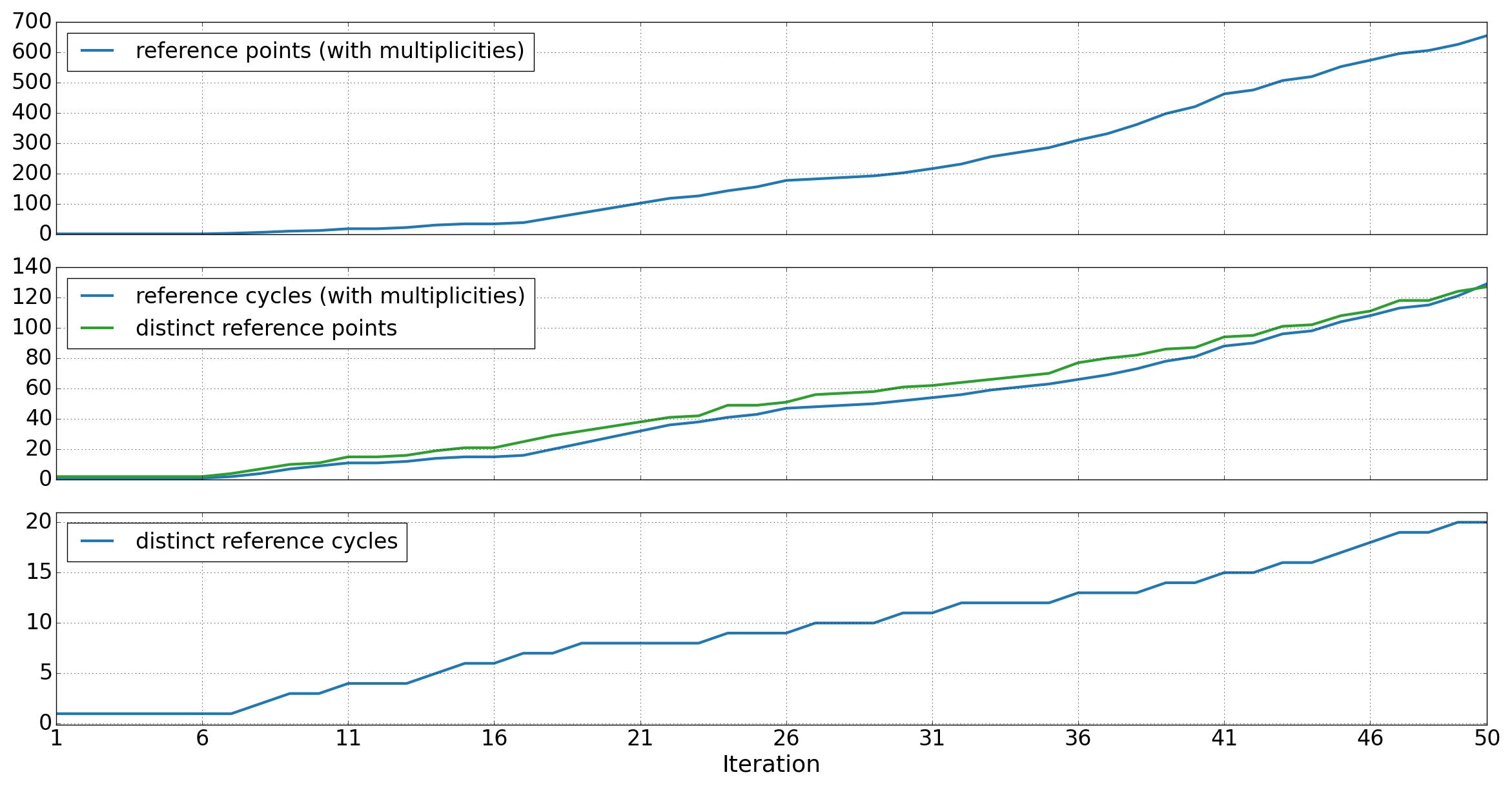}
	\caption{Graphs of the data growing (number of reference cycles and reference points) versus iteration of the Iterative Nonlinear Programming optimization described below \eqref{EQ: HenonMetricGeneralForm}.}
	\label{Fig: HenonIterativeNonlinearProgrammingOptimizationDataGrowth}
\end{figure}

Following the last step \nameref{DESC: GOA6}, we need to find a path with a given length $t$ having maximal mean weight in the refined graph $G$. We computed such weights for $t = 10^{k}$ and $k \in \{1,\ldots,6\}$. This gives a sequence of improving estimates which are collected in Tab.~\ref{TAB: HenonEstimatesPathLengths}. Note that for $k=6$, the estimate agrees with \eqref{EQ: HenonLargestLPConjecture} up to $5$ digits.
\begin{table}
	\begin{minipage}{1.\linewidth}
		%\caption{}
		\centering
		\begin{tabular}{||c | c||} 
			\hline
			\text{Path length t} & \text{Upper estimate for $\lambda_{1}(\Xi;\mathcal{A})$} \\ [1.ex] 
			\hline\hline
			$10$ & $0.7466752468429976$\\
			\hline
			$10^{2}$ & $0.6635115149479631$\\
			\hline
			$10^{3}$ & $0.6551951417584647$\\
			\hline
			$10^{4}$ & $0.6543635044395801$\\
			\hline
			$10^{5}$ & $0.6542803407063347$\\ 
			\hline
			$10^{6}$ & $0.6542720243392837$\\
			\hline
		\end{tabular}
	\end{minipage}%
	\caption{Estimates for the largest uniform Lyapunov exponent $\lambda_{1}(\Xi;\mathcal{A})$ for the classical H\'{e}non attractor $\mathcal{A}$ ($a=1.4$, $b=0.3$) via maximal paths of length $t$ in the symbolic image with weights corresponding to the metric from \eqref{EQ: HenonMetricGeneralForm} with values in Table \ref{TAB: HenonMetricValues}.}
	\label{TAB: HenonEstimatesPathLengths}
\end{table}

Moreover, for $k \in \{1,\ldots,4\}$ we extracted the simple cycle with maximal multiplicity from the optimal path. In all cases, it was a self-loop $\mathfrak{i}^{c}$ corresponding to the cube $\mathcal{Q}_{288,288}$ containing the positive equilibrium $q^{+}$ with 
\begin{equation}
	W_{\tau}(\mathfrak{i}^{c}) = 2 \cdot 0.6542711002929601\ldots.
\end{equation}
In particular, one cannot obtain more precise to \eqref{EQ: HenonLargestLPConjecture} estimates without further improving the metric.

Moreover, we can also justify that the Lyapunov dimension $\dim_{\operatorname{L}}(\Xi;\mathcal{A})$ of $\Xi$ on $\mathcal{A}$ is achieved\footnote{In fact, if \eqref{EQ: HenonLargestLPConjecture} is true, then \eqref{EQ: HenonLDAtPosState} must be also satisfied, since the H\'{e}non mapping has a constant determinant.} at $q^{+}$, i.e.
\begin{equation}
	\label{EQ: HenonLDAtPosState}
	\dim_{\operatorname{L}}(\Xi;\mathcal{A}) = \dim_{\operatorname{L}}(\Xi;q^{+}) = 1 + \frac{\lambda_{1}(\Xi;q^{+})}{\lambda_{1}(\Xi;q^{+}) - \ln b} \overset{b = 0.3}{=} 1.3520909\ldots
\end{equation}
Namely, in the same metric we may get the estimate $\dim_{\operatorname{L}}(\Xi;\mathcal{A}) \leq 1.35361$ and after 10 iterates of further corrections, we get (via the path of length $t=10^{6}$)
\begin{equation}
	\label{EQ: HenonLyapDimAttractor}
	\dim_{\operatorname{L}}(\Xi;\mathcal{A}) \leq 1.352095.
\end{equation}
In particular, we obtain the same estimate for the fractal dimension of $\mathcal{A}$ via \eqref{EQ: FractalDimEstViaLD}. This also improves the corresponding estimates from \cite{KawanHafsteinGiesl2021,LouzeiroetAlAdaptedMetricsComp2022} and establishes the maximum that can be achieved via the volume contraction method.

In the repository, we present Python programs for constructing symbolic image for \eqref{EQ: HennonLeonovForm} with $a=1.4$ and $b=0.4$ and verification of all the estimates from Tab.~\ref{TAB: HenonEstimatesPathLengths} and \eqref{EQ: HenonLyapDimAttractor}.
\section{Rabinovich system}
\label{SEC: RabinovichLargestULE}

Let us consider the following Lorenz-like system
\begin{equation}
	\label{EQ: LorenzLikeSytem}
	\begin{split}
		&\dot{x} = -\sigma (x - y) - a y z,\\
		&\dot{y} = r x - y - x z,\\
		&\dot{z} = -b z + xy,
	\end{split}
\end{equation}
where $\sigma, r, b > 0$ and $a \in \mathbb{R}$ are real parameters.

For $a = 0$, \eqref{EQ: LorenzLikeSytem} is just the classical Lorenz system. For the interesting to us case of $a < 0$, \eqref{EQ: LorenzLikeSytem} can be transformed into the system studied by M.I.~Rabinovich \cite{Rabinovich1978} by properly scaling space and time variables.

Depending on the parameters, \eqref{EQ: LorenzLikeSytem} may have $1$, $3$ or $5$ equilibria (see Section 2 in \cite{LeonovBoi1992}). For us it is important that \eqref{EQ: LorenzLikeSytem} is dissipative and all solutions enter the ellipsoid $\mathcal{E}_{\delta}$ given by (see p.~9 in \cite{LeonovBoi1992})
\begin{equation}
	\label{EQ: RabinovichEllipsoidsLocalization}
	x^{2} + \delta y^{2} + (a + \delta)	\left( z - \frac{d + \delta r}{a + \delta} \right)^{2} \leq \frac{b(d + \delta r)^{2}}{2 c (a + \delta)},
\end{equation}
where $c = \min\{ \sigma, 1, b/2  \}$ and $\delta > 0$ is arbitrary such that $a + \delta > 0$. Therefore there exists a global attractor $\mathcal{A}$ of \eqref{EQ: LorenzLikeSytem}.

In the Master thesis\footnote{Supervised by T.N.~Mokaev who also communicated the result to us.} of E.G.~Fedorov entitled ``The Eden hypothesis for generalised Lorenz
system'' defended at St. Petersburg University in 2018, for the parameters
\begin{equation}
	\label{EQ: RabinovichSystemFedorovParameters}
	\sigma = 2.5, r = 1.25, b=1 \text{ and } a = -40
\end{equation}
it was justified that the (uniform) Lyapunov dimension (and the largest uniform Lyapunov exponent) over $\mathcal{A}$ is not achieved at equilibria. Namely, with the aid of numerical experiments it was found and stabilized a periodic orbit on $\mathcal{A}$ which produce larger values.

More precisely, under \eqref{EQ: RabinovichSystemFedorovParameters} there are three equilibria: $q^{0} = (0,0,0)$ and
\begin{equation}
	\label{EQ: RabonovichEquilibria}
	\begin{split}
		q^{\pm} = \sqrt{ \theta } \cdot \left( \pm 1, \pm \frac{rb}{\theta + b} , r \frac{\sqrt{ \theta }}{\theta + b}  \right), \text{ where }\\
		\theta = \frac{1}{2A}\left( -B - \sqrt{ B^{2} - 4AC } \right),\\
		A = -\sigma, \ B = b\sigma (r - 2) - ab r^{2} \text{ and } C = b^{2} \sigma (r-1).
	\end{split}
\end{equation}
As for the classical Lorenz system, we have that $q^{0}$ is a saddle, and $q^{\pm}$ are unstable foci. Numerical experiments justify the existence of a chaotic physical attractor $\mathcal{A}_{\varphi} \subset \mathcal{A}$ which is a Lorenz-like (i.e. singular) attractor, i.e. the saddle $q^{0} \in \mathcal{A}_{\varphi}$ plays a key role in its formation (see Fig. \ref{Fig: RabinovichAttractorLocalization}).

Let $\vartheta$ be the time-$1$ (below we will use different times) mapping of the semiflow generated by \eqref{EQ: LorenzLikeSytem} and $\Xi$ be the derivative cocycle over $(\mathcal{A};\vartheta)$. Straightforward computations give largest local Lyapunov exponents
\begin{equation}
	\lambda_{1}(\Xi;q^{0}) = 0.1702\ldots\ \text{ and } \lambda_{1}(\Xi;q^{\pm}) = 0.02551\ldots.
\end{equation}
and local Lyapunov dimensions
\begin{equation}
	\dim_{\operatorname{L}}(\Xi;q^{0}) = 1.1703\ldots\ \text{ and } \dim_{\operatorname{L}}(\Xi;q^{\pm}) = 2.011\ldots.
\end{equation}

In the Master thesis of E.G.~Fedorov it was found a short periodic orbit $\Gamma$ such that\footnote{In the original work it was $\lambda_{1}(\Xi;\Gamma) \approx 0.4848$ and $\dim_{\operatorname{L}}(\Xi;\Gamma) \approx 2.09726$ obtained from the delayed feedback stabilization. We present corrected values via the Newton-Raphson method.} $\lambda_{1}(\Xi;\Gamma) = 0.48265\ldots$ and $\dim_{\operatorname{L}}(\Xi;\Gamma) = 2.09686\ldots$ that provides evidence that both $\lambda_{1}(\Xi;\mathcal{A})$ and $\dim_{\operatorname{L}}(\Xi;\mathcal{A})$ are not achieved at equilibria. He also stabilized some larger periodic orbits and noted that the local characteristics over them decay as the period increases. Thus, the shortest periodic orbit $\Gamma$ seems to be a candidate for the extreme trajectory.

We are going to justify that
\begin{equation}
	\label{EQ: RabinovichULEConj}
	\lambda_{1}(\Xi;\mathcal{A}) = \lambda_{1}(\Xi;\Gamma) \approx 0.48265.
\end{equation}

It can be verified that under \eqref{EQ: RabinovichSystemFedorovParameters} the rectangular parallelepiped $[-20,20] \times [-2,2] \times [-1, 4]$ contains the global attractor of \eqref{EQ: LorenzLikeSytem}. For example, the ellipsoid $\mathcal{E}_{\delta}$ for $\delta = 125$ lies within the parallelepiped (see Fig. \ref{Fig: RabinovichAttractorLocalization}). It will be convenient to study the system \eqref{EQ: LorenzLikeSytem} after the change of variables
\begin{equation}
	\label{EQ: RabLorenzLikeChange}
	x \mapsto \frac{x}{20}, \ y \mapsto \frac{y}{2}, \ z \mapsto \frac{z - 1.5}{2.5}
\end{equation}
so that the cube $[-1,1]^{3}$ in these new coordinates will contain the global attractor. We will abuse notation and use $q^{0}, q^{\pm}, \mathcal{A}, \Gamma$ and $\mathcal{E}_{\delta}$ to denote the above defined objects after the change \eqref{EQ: RabLorenzLikeChange}.

It is not hard to see that under \eqref{EQ: RabLorenzLikeChange} the system transforms into
\begin{equation}
	\label{EQ: RabinovichAfterChange}
	\begin{split}
		&\dot{x} = -\sigma x + \frac{1}{10}(\sigma - 1.5 a) y - \frac{a}{4} yz,\\
		&\dot{y} = (10r - 15) x - y - 25 xz,\\
		&\dot{z} = -b z + 16 xy - \frac{3 b}{5}.
	\end{split}
\end{equation}

\begin{figure}
	\includegraphics[width=1\textwidth]{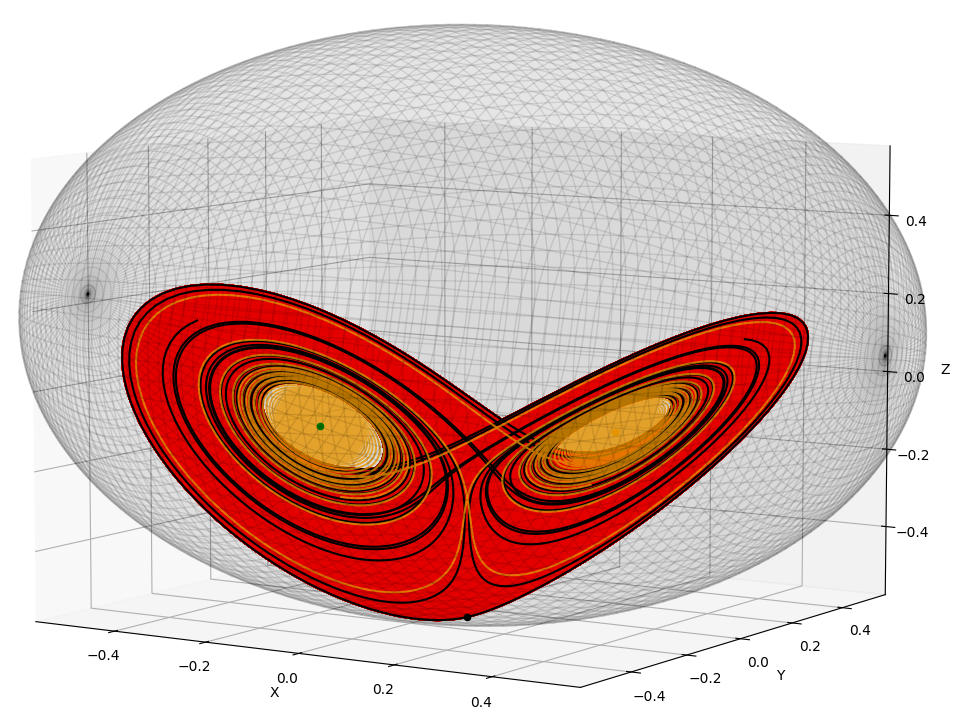}
	\caption{Numerical simulation of the attractor of \eqref{EQ: LorenzLikeSytem} with parameters \eqref{EQ: RabinovichSystemFedorovParameters} under the change of variables \eqref{EQ: RabLorenzLikeChange}. It is localized by a trajectory (red) coming from outside (transient time is removed) and trajectories (black and orange) starting respectively in small neighborhoods of $q^{0}$ (black bold dot), $q^{+}$ (orange bold dot) and $q^{-}$ (green bold dot). In agreement with rigorous results, all trajectories are contained in the ellipsoid $\mathcal{E}_{\delta}$ from \eqref{EQ: RabinovichEllipsoidsLocalization} with $\delta = 125$ (black transparent mesh).}
	\label{Fig: RabinovichAttractorLocalization}
\end{figure}

It is important that \eqref{EQ: RabinovichAfterChange} (as well as \eqref{EQ: LorenzLikeSytem}) has the $\mathbb{Z}_{2}$-symmetry given by
\begin{equation}
	\label{EQ: RabinovichSymmetry}
	S = \begin{pmatrix}
		-1 & 0 & 0\\
		0 & -1 & 0\\
		0 & 0 & 1
	\end{pmatrix},
\end{equation}
i.e. for any solution triple $(x(t), y(t), z(t))$, the triple $(-x(t),-y(t),z(t))$ is also a solution.

\begin{remark}
	\label{REM: NumericalIntegrationODEs}
	For numerical integration of ODEs with individual initial conditions we used JiTCODE package for Python (see G.~Ansmann \cite{AnsmannGJitCDDE2018}) and the Dormand-Prince method (dopri5) implemented therein with tolerance parameters taken as $\operatorname{atol} = 10^{-8}$ and $\operatorname{rtol} = 0$. %On our device (see Remark \ref{REM: OurDevice}) it runs $10-15\%$ faster than the analogous implementation in SciPy. Moreover, for numerical integration for a batch of initial conditions at a time we used the PyTorch implementation of the Dormand-Prince method (with the same tolerance parameters) from package \href{https://github.com/rtqichen/torchdiffeq}{torchdiffeq} (see R.T.~Chen et al. \cite{ChenetalNeuralODEs2018}), which also allows computations on GPU.
\end{remark}

%Note that for the candidate periodic orbit, the period can be estimated as $T_{0} = 3.52324$. It is reasonable to use equal transition times $T_{0}/k_{\tau}$ with some positive integer $k_{\tau}$ to construct symbolic images. 

Since the attractor $\mathcal{A}$ of \eqref{EQ: RabinovichAfterChange} lies in $[-1,1]^{3}$, we partition the cube according to a given $\varepsilon > 0$ into smaller cubes
\begin{equation}
	\mathcal{Q}_{i,j,k} = [-1, -1 + \varepsilon]^{3} + \varepsilon \cdot (i,j,k),
\end{equation}
where $i,j,k \in \{0, \ldots, N_{\varepsilon}\}$ and $N_{\varepsilon} = \lceil 2/\varepsilon \rceil$.

We computed heuristic symbolic images (see Section \ref{SEC: ComputationalTips}) respecting the symmetry \eqref{EQ: RabinovichSymmetry} for $\varepsilon \in \{ 0.01, 0.005, 0.002, 0.001 \}$ and equal transition times $\tau_{i} = T_{0}/k_{\tau}$ with $k_{\tau} \in \{ 3, 5, 7 \}$ and $T_{0} = 3.52324$. More precisely, we integrated \eqref{EQ: RabinovichAfterChange} (see Remark \ref{REM: NumericalIntegrationODEs}) over the time interval $[0, 3 \cdot 10^{5}]$, obtaining values on the uniform grid of times with step $10^{-3}$, for three initial data $(x_{e},y_{e},z_{e}) = (0.1, 1, -1)$, $(x_{0},y_{0},z_{0} = q^{0} + (0.0001, 0, 0)$, $(x_{+},y_{+},z_{+}) = q^{+} + (0.0001, 0, 0)$ therefore obtaining $3$ trajectories $q_{e}(t)$, $q_{0}(t)$ and $q_{+}(t)$, where, abusing the notation, $q^{0}$ and $q^{\pm}$ denote the corresponding equilibria from \eqref{EQ: RabonovichEquilibria} after the change \eqref{EQ: RabLorenzLikeChange} and for $q_{e}(t)$ we remove the time segment $[0,500]$ as transient. Then the heuristic symbolic image $G=G^{h}_{\varepsilon,k_{\tau}}$ is constructed from 6 pseudo-trajectories $\{q_{e}(t), q_{0}(t), q_{+}(t), Sq_{e}(t), Sq_{0}(t), Sq_{+}(t) \}$. For $\varepsilon=0.001$ and $k_{\tau} = 3$, $G^{h}_{\varepsilon,k_{\tau}}$ contains about $10^{6}$ vertices and $11 \cdot 10^{6}$ edges.
 
For $\varepsilon = 0.01$, the Relative Weights Optimization applied in the standard metric gives the estimate $\lambda_{1}(\Xi;\mathcal{A}) \leq 2.6299\ldots$. This indicates that it is a challenging task to construct a metric justifying \eqref{EQ: RabinovichULEConj}. 

We searched for inner products $\langle \cdot, \cdot \rangle_{q} = \langle \cdot, P(q) \cdot \rangle_{\operatorname{Euc}}$ in $T_{q} \mathbb{R}^{3} \cong \mathbb{R}^{3}$, where $q = (x,y,z) \in \mathbb{R}^{3}$, with symmetric positive-definite $P(q)=P(x,y,z)$ given by (of course $S^{T} = S$)
\begin{equation}
	P(x,y,z) = \frac{1}{4}\left[ P_{N}(x,y,z) + S^{T} P_{N}(-x,-y,z) S  \right]^{2} + \operatorname{Id}_{3},
\end{equation}
where $\operatorname{Id}_{3}$ is the unit $(3\times 3)$-matrix and
\begin{equation}
	\label{EQ: RabinovichMetric}
	P_{N}(x,y,z) = \begin{bmatrix}
		N_{1}(x,y,z) \ N_{2}(x,y,z) \ N_{3}(x,y,z)\\
		N_{2}(x,y,z) \ N_{4}(x,y,z) \ N_{5}(x,y,z)\\
		N_{3}(x,y,z) \ N_{5}(x,y,z) N_{6}(x,y,z)
	\end{bmatrix} 
\end{equation}
with functions $N_{1}(x,y,z), \ldots, N_{6}(x,y,z)$ depending on some parameters. Clearly, such a metric respects the symmetry $S$ (see around \eqref{EQ: AveragingMetricToRespectS}).

We construct $P_{N}$ via a neural network model with architecture $(3,200,6)$ and activation function $\sigma(s) = 1/(1+s^{2})$ for $s \in \mathbb{R}$. More precisely, as model parameters we take $(200 \times 3)$- and $(6 \times 200)$-matrices $M_{1}$ and $M_{2}$ respectively and $200$- and $6$-vectors $b_{1}$ and $b_{2}$ respectively. Then the model is given by
\begin{equation}
	N(x,y,z) = (N_{1}(x,y,z),\ldots,N_{6}(x,y,z)) \coloneq M_{2}\sigma\left(M_{1}(x,y,z)^{T} + b_{1}\right) + b_{2} \in \mathbb{R}^{6},
\end{equation}
where the application of $\sigma$ to a vector should be understood componentwise. This model\footnote{We also tested several deep architectures, but they did not give better results.} showed a good fit to data from Lyapunov-Floquet metrics over the candidate periodic orbit and got to deep stages of optimization on the heuristic periodic graph constructed from $11$ shortest periodic orbits.

\begin{table}
	\begin{minipage}{1.\linewidth}
		%\caption{}
		\centering
		\begin{tabular}{||c | c||} 
			\hline
			\text{Path length t} & \text{Upper estimate for $\lambda_{1}(\Xi;\mathcal{A})$} \\ [1.ex] 
			\hline\hline
			$10$ & $0.9257762213273285$\\
			\hline
			$10^{2}$ & $0.5340507381091755$\\
			\hline
			$10^{3}$ & $0.4881811626497006$\\
			\hline
			$10^{4}$ & $0.4835925539521204$\\
			\hline
			$10^{5}$ & $0.4831336089521827$\\ 
			\hline
			$10^{6}$ & $0.4830877144522192$\\
			\hline
		\end{tabular}
	\end{minipage}%
	\caption{Estimates for the largest uniform Lyapunov exponent $\lambda_{1}(\Xi;\mathcal{A})$ over the global attractor $\mathcal{A}$ of the Rabinovich system with parameters \eqref{EQ: RabinovichSystemFedorovParameters} via maximal paths of length $t$ in the heurstic symbolic image $G_{\varepsilon,k_{\tau}}$ with $\varepsilon = 0.001$ and $k_{\tau} = 3$ and the weights corresponding to a metric from \eqref{EQ: RabinovichMetric} obtained after optimization.}
	\label{TAB: RabinovichEstimatesPathLengths}
\end{table}

Finding appropriate parameters of the neural network took about 2 months on our device (see Remark \ref{REM: OurDevice}). At the final stage, we used the heuristic symbolic image $G_{\varepsilon,k_{\tau}}$ with $\varepsilon = 0.001$ and $k_{\tau} = 3$. It took about 2000 iterations of the Iterative Nonlinear Programming optimization to converge resulting in a metric providing the estimate for $\lambda_{1}(\Xi;\mathcal{A})$ collected in Tab. \ref{TAB: RabinovichEstimatesPathLengths}. At the end, the algorithm collected 1408 distinct reference cycles, with 36411 distinct reference points (with about $7\cdot 10^{5}$ multiple reference points). Moreover, the maximal path of length $t=1000$ contains a simple cycle of length 106 with almost all vertices corresponding to covering elements of $\Gamma$.

For the final metric, we used a uniform grip of 27 points in each covering element to compute the corresponding global maxima. Even though the heuristic symbolic images $G_{\varepsilon,k_{\tau}}$ may be not too reliable, the obtained estimates are in a good agreement with the expectations, Namely, the estimate for $t=10^{6}$ agrees with \eqref{EQ: RabinovichULEConj} up to $5 \cdot 10^{-4}$.

In particular, this shows that the Iterative Nonlinear Programming optimization may succeed in complex problems.

\appendix
\section{Lyapunov-Floquet metrics for periodic cocycles}
\label{SEC: LyapunovFloquetMetrics}
In this section we start with a linear continuous-time cocycle $\Xi$ in a complex (see Remark \ref{REM: LyapunovFloquetRealSpace}) $n$-dimensional vector space $\mathbb{E}$ which is $\sigma$-periodic, i.e. the driving system $(\mathcal{Q},\vartheta)$ is a single $\sigma$-periodic orbit. It will be convenient to identify $\mathcal{Q}$ with the circle $\mathcal{S}^{1}_{\sigma} := \mathbb{R} / \sigma\mathbb{Z}$ of length $\sigma$ and $\vartheta$ with the translation flow on $\mathcal{S}^{1}_{\sigma}$, i.e. $\vartheta^{t}(q) = q + t \mod \sigma$.

We suppose that any trajectory $\xi(t) = \Xi^{t}(q,\xi_{0})$ of $\Xi$ with $\xi_{0} \in \mathbb{E}$ satisfies the linear differential equation
\begin{equation}
	\dot{\xi}(t) = A(t + q) \xi(t)
\end{equation}
with a $\sigma$-periodic continuous operator-valued function $A(\cdot)$.

Recall that $\Xi^{\sigma}(q,\cdot)$ is called the monodromy operator over $q \in \mathcal{Q}$. By the cocycle property, monodromy operators over different $q$ are conjugated and, consequently, they share the same set of eigenvalues $\mu_{1}, \mu_{2}, \ldots, \mu_{n}$ known as the Floquet multipliers. It is convenient to arrange them by nonincreasing of modules. Then $\sigma^{-1}|\mu_{1}|$, \ldots, $\sigma^{-1}|\mu_{n}|$ are known to coincide with the Lyapunov exponents of $\Xi$ (see below).

By the Floquet theorem, there exists a $\sigma$-periodic Lyapunov change of variables $C(t) \colon \mathbb{E} \to \mathbb{E}$ such that
\begin{equation}
	\label{EQ: FloquetChangeEquation}
	\Xi^{t}(q,\cdot) = C(q+t)  e^{tB},
\end{equation}
where $e^{tB}$ is the linear semigroup generated by the stationary equation
\begin{equation}
	\dot{\eta}(t) = B\eta(t)
\end{equation}
with $B$ given by the relation \eqref{EQ: FloquetChangeEquation} with $q = 0$ and $t = \sigma$, i.e.
\begin{equation}
	e^{\sigma B} = \Xi^{\sigma}(0,\cdot) \text{ or } B = \frac{1}{\sigma} \log \Xi^{\sigma}(0,\cdot).
\end{equation} 
Note that the logarithm exists since  $\Xi^{\sigma}(0,\cdot)$ is nondegenerate, but it may be not unique. Clearly, the eigenvalues (also called characteristic exponents) $\lambda_{1},\ldots,\lambda_{m}$ of $B$ arranged by nonincreasing of real parts are related to the Floquet multiplies by $|\mu_{j}| = e^{\sigma \operatorname{Re}\lambda_{j}}$ for any $j \in \{1,\ldots,n\}$.

\begin{remark}
	\label{REM: LyapunovFloquetRealSpace}
	If the space $\mathbb{E}$ is real (what is natural for applications), there are well-known spectral limitations concerned with the existence of real logarithms (there must be an even number of Jordan blocks corresponding to any negative eigenvalue). In general, there exists only a $2\sigma$-periodic change of variables $C(t)$ determined by putting $q = 0$ and $t=2\sigma$ in \eqref{EQ: FloquetChangeEquation}. This is not appropriate for the construction of Lyapunov-Floquet metrics below.
\end{remark}

Assume that there exists an eigenbasis $e_{1},\ldots,e_{n}$ of $B$ and let $\langle \cdot, \cdot \rangle_{B}$ be the (Hermitian\footnote{We use the agreement that a Hermitian form is linear in its first argument and conjugate-linear in the second.}) inner product in $\mathbb{E}$ making the eigenbasis orthonormal. Then the family of inner products
\begin{equation}
	\langle \xi_{1}, \xi_{2} \rangle_{q} \coloneq \left\langle C^{-1}(q)\xi_{1}, C^{-1}(q)\xi_{2} \right\rangle_{B}, \text{ where } \xi_{1},\xi_{2} \in \mathbb{E},
\end{equation}
defines a metric over $\mathcal{Q}$ which is called by us the \textit{Lyapunov-Floquet metric}.

Let $e_{1}(q),\ldots,e_{n}(q)$ be the images of $e_{1},\ldots,e_{n}$ respectively under $C(q)$. Then, by definition, $e_{1}(q),\ldots,e_{n}(q)$ is an orthonormal basis w.r.t. $\langle \cdot, \cdot \rangle_{q}$. By \eqref{EQ: FloquetChangeEquation}, in these $\sigma$-periodic coordinates, the cocycle $\Xi$ is diagonal as 
\begin{equation}
	\label{EQ: FloquetBasisProperties}
	\Xi^{t}(q,e_{j}(q)) = e^{\lambda_{j} t} e_{j}(\vartheta^{t}(q)) \text{ for any } t \geq 0,  q \in \mathcal{Q} \text{ and } j \in \{1,\ldots,n\}.
\end{equation}
In particular, the singular values of $\Xi^{t}(q,\cdot)$ in the metric are given by $\exp\left(\frac{t}{\sigma}\log|\mu_{1}|\right)$, \ldots, $\exp\left(\frac{t}{\sigma}\log|\mu_{n}|\right)$.

Thus, the Lyapunov-Floquet metric allows to capture asymptotic (spectral) characteristics of the periodic cocycle $\Xi$ on the infinitesimal level. However, here we are interested in short-time (rather than infinitesimal) computations having in mind that the periodic cocycle is the derivative cocycle over a periodic trajectory embedded into a chaotic attractor.

In this regard, it should be noted that Lyapunov-Floquet metrics are computationally relevant only for the shortest periodic orbits. For example, for the Rabinovich system studied in Section \ref{SEC: RabinovichLargestULE} this limits to only first 3 periodic orbits. This is caused by the following two reasons. Firstly, basis vectors $e_{j}(q)$ corresponding to contracting (expanding) directions are usually very short (large) w.r.t. to a standard inner product (in which our calculations are done) to be referred as Euclidean. Secondly, the angle between some basis vectors may be close to a multiple of $\pi$ (the usual scenario for non-uniformly hyperbolic attractors). This gives drastically small or, conversely, large eigenvalues for metrics and destroys numerical computations.

However, experiments suggest that nontrivial Lyapunov exponents over periodic orbits decay as the period increases. Thus, orbits which are extreme for uniform Lyapunov exponents over the attractor are expected to be the shortest ones (in accordance with observations of B.R.~Hunt and E.~Ott \cite{HuntOtt1996}; see also Section \ref{SEC: RobustAlgoEstimateULE}). Here, Lyapunov-Floquet metrics may serve as a base for constructing metrics in a neighborhood of the attractor providing a test for finding appropriate architectures of neural networks for further learning. This is illustrated in Section \ref{SEC: RabinovichLargestULE}.

Another important feature of Lyapunov-Floquet metrics is that they capture symmetries of the system. We illustrate this by the following proposition.
\begin{proposition}
	Let $S$ be a diffeomorphism of $\mathbb{R}^{n}$ which is a symmetry for a smooth vector field $F$ in $\mathbb{R}^{n}$, i.e.
	\begin{equation}
		\label{EQ: SymmetryODE}
		D_{v}SF(v) = F(S(v)) \text{ for any } v \in \mathbb{R}^{n}
	\end{equation}
	or, equivalently, $v(\cdot)$ is a solution of $\dot{v} = F(v)$ iff $(Sv)(\cdot) \coloneq S(v(\cdot))$ is also a solution.
	
	Suppose $v(t)$ is a $\sigma$-periodic solution of $\dot{v}(t)=F(v(t))$.
	
	Let $\langle \cdot, \cdot \rangle$ be an inner product in $\mathbb{E} = \mathbb{C}^{n}$ (or in $\mathbb{E} = \mathbb{R}^{n}$ if the real logarithm exists; see Remark \ref{REM: LyapunovFloquetRealSpace}) and $Q(t)$ be the operator\footnote{That is the above construction applied to $A(t) \coloneq D_{S(v(t))}F$ (more rigorously, $A(t)$ is the complexification of $D_{S(v(t))}F$).} of a Lyapunov-Floquet metric over $Sv(t)$ w.r.t. $\langle \cdot, \cdot \rangle$ (see \eqref{EQ: OperatorLFQ}). Then (the adjoint is taken w.r.t. $\langle \cdot, \cdot \rangle$ and we omit mentioning complexifications)
	\begin{equation}
		\label{EQ: LyapunovFloquetMetricSymmetry}
		P(t) = (D_{v(t)} S)^{*} Q(t) D_{v(t)} S
	\end{equation}
	is the operator of a Lyapunov-Floquet metric over $v(t)$ for $t \in [0,\sigma]$.
\end{proposition}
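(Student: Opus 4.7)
The plan is to transport the Lyapunov–Floquet data from $Sv(\cdot)$ to $v(\cdot)$ using $D_{v(\cdot)}S$ as a conjugating isomorphism between the two variational cocycles. First, differentiating the symmetry identity \eqref{EQ: SymmetryODE} at $v$ in a direction $\xi$ yields
\begin{equation*}
D^{2}_{v}S(F(v),\xi) + D_{v}S \cdot D_{v}F \cdot \xi = D_{S(v)}F \cdot D_{v}S \cdot \xi.
\end{equation*}
Evaluating along $v(t)$ and using $\dot v = F(v)$, a short calculation shows that $\eta(t) \coloneq D_{v(t)}S \cdot \xi(t)$ solves the variational equation along $Sv$ whenever $\xi(t)$ solves the one along $v$. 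In cocycle language this is the intertwining
\begin{equation*}
\Xi^{t}_{Sv}(q,\cdot) \circ D_{v(q)}S = D_{v(q+t)}S \circ \Xi^{t}_{v}(q,\cdot),
\end{equation*}
and since $v$ is $\sigma$-periodic so is the family $q \mapsto D_{v(q)}S$ of invertible linear maps.

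Next I would pull back the Floquet factorisation. If $C_{Sv}(\cdot)$ and $B$ are the $\sigma$-periodic change of variables and the Floquet operator given by \eqref{EQ: FloquetChangeEquation} for $Sv$, set
\begin{equation*}
C_{v}(q) \coloneq (D_{v(q)}S)^{-1} C_{Sv}(q),
\end{equation*}
which is $\sigma$-periodic and invertible. Substituting into $C_{v}(q+t)\, e^{tB}\, C_{v}(q)^{-1}$ and applying the intertwining recovers $\Xi^{t}_{v}(q,\cdot)$, so $(C_{v},B)$ is a Floquet decomposition for $v$ with the \emph{same} Floquet operator. In particular the same eigenbasis of $B$ and the same ambient inner product $\langle \cdot, \cdot \rangle$ that were used to build $Q$ are admissible for the Lyapunov–Floquet metric over $v$, and its operator is $(C_{v}(q) C_{v}(q)^{*})^{-1}$.

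To finish, write $X = D_{v(q)}S$ and expand (all adjoints with respect to $\langle \cdot, \cdot \rangle$):
\begin{equation*}
(C_{v}(q) C_{v}(q)^{*})^{-1} = \bigl( X^{-1} C_{Sv}(q) C_{Sv}(q)^{*} X^{-*} \bigr)^{-1} = X^{*} (C_{Sv}(q) C_{Sv}(q)^{*})^{-1} X = X^{*} Q(q) X,
\end{equation*}
which is exactly \eqref{EQ: LyapunovFloquetMetricSymmetry}. The only point requiring genuine care is the bookkeeping for the eigenbasis and inner product: one must choose them \emph{jointly} for both curves, which is legitimate because the Floquet operators coincide; in the real setting of Remark \ref{REM: LyapunovFloquetRealSpace} the existence of a real logarithm for one monodromy automatically transfers to the other via the conjugation by $D_{v(0)}S$, so no further analytic obstacle is expected.
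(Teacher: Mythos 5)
Your proposal is correct and follows essentially the same route as the paper: derive the cocycle intertwining from the symmetry identity, then transport the Lyapunov--Floquet structure over $Sv$ to one over $v$ by conjugating with $D_{v(\cdot)}S$. The paper organizes the last step differently: it defines $e_{j}(v(t)) \coloneq (D_{v(t)}S)^{-1}e_{j}((Sv)(t))$, checks via the intertwining that this is a $\sigma$-periodic basis diagonalizing the cocycle over $v$, and then verifies by direct computation that $\langle \cdot, P(t)\cdot\rangle$ makes it orthonormal. You instead package the transport through the change-of-variables matrix, setting $C_{v} = (D_{v(\cdot)}S)^{-1}C_{Sv}$, and then compute the metric operator algebraically. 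The two are equivalent since $C_{v}(t)e_{j} = e_{j}(v(t))$, and your observation that the Floquet operator $B$ and eigenbasis carry over unchanged is also implicit in the paper's choice of basis.

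One small imprecision: the operator of a Lyapunov--Floquet metric w.r.t.\ an ambient inner product $\langle\cdot,\cdot\rangle$ is $C^{-*}GC^{-1}$, where $G$ is the (positive-definite) operator with $\langle\cdot,\cdot\rangle_{B}=\langle\cdot,G\cdot\rangle$; it equals $(CC^{*})^{-1}$ only when the eigenbasis of $B$ happens to be orthonormal in $\langle\cdot,\cdot\rangle$ itself, which the statement does not assume. This does not affect your conclusion, because the factor $G$ sits in the middle and survives the conjugation unchanged: $C_{v}^{-*}GC_{v}^{-1} = X^{*}\bigl(C_{Sv}^{-*}GC_{Sv}^{-1}\bigr)X = X^{*}Q X$. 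The paper's basis-level argument sidesteps this bookkeeping entirely, which is a modest advantage of its presentation.
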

\begin{proof}
	Let $\dot{\xi}(t) = D_{v(t)}F \xi(t)$ and $\xi(0) = \xi_{0} \in \mathbb{E}$. Then \eqref{EQ: SymmetryODE} gives
	\begin{equation}
		\frac{d}{dt}\left[D_{v(t)}S\xi(t)\right] = D_{(Sv)(t)}F D_{v(t)}S \xi(t)
	\end{equation}
	or, in terms of the cocycle we have $\Xi^{t}(q,\xi_{0}) = \xi(t)$, $q = v(0)$ and
	\begin{equation}
		\label{EQ: CocycleSymmetryIntegral}
		\Xi^{t}(q,\xi_{0}) = (D_{\vartheta^{t}(q)} S)^{-1} \Xi^{t}(S(q), D_{q}S \xi_{0}).
	\end{equation}
	Let $e_{1}((Sv)(t)), \ldots, e_{n}((Sv)(t))$ be a periodic basis as in \eqref{EQ: FloquetBasisProperties} over $(Sv)(t)$ and $\langle \cdot, \cdot \rangle_{(Sv)(t)}$ be the associated Lyapunov-Floquet metric. Then $Q(t)=Q^{*}(t)$ is determined via $\langle \cdot, \cdot \rangle$ as 
	\begin{equation}
		\label{EQ: OperatorLFQ}
		\langle \cdot, \cdot \rangle_{(Sv)(t)} =  \langle \cdot, Q(t) \cdot \rangle.
	\end{equation}
	According to \eqref{EQ: CocycleSymmetryIntegral}, $e_{j}(v(t)) \coloneq (D_{v(t)}S)^{-1}e_{j}((Sv)(t))$ taken over $j \in \{1,\ldots,n\}$ form a periodic basis diagonalizing $\Xi$ over $v(t)$. From this, \eqref{EQ: LyapunovFloquetMetricSymmetry} and \eqref{EQ: OperatorLFQ} for any $j,k \in \{1,\ldots,n\}$ we have
	\begin{equation}
		\begin{split}
			\left\langle e_{j}(v(t)), P(t) e_{k}(v(t))  \right\rangle = \left\langle e_{j}(v(t)), (D_{v(t)}S)^{*} Q(t) D_{v(t)}S e_{k}(v(t)) \right\rangle =\\
			= \left\langle D_{v(t)}S e_{j}(v(t)),  Q(t) D_{v(t)}S e_{k}(v(t)) \right\rangle = \left\langle e_{j}((Sv)(t)), e_{k}((Sv)(t)) \right\rangle_{(Sv)(t)}.
		\end{split}
	\end{equation}
	Thus the inner product $\langle \cdot, P(t) \cdot \rangle$ makes the basis $\{ e_{j}(v(t)) \}_{j = 1}^{n}$ orthonormal and, consequently, it is the associated Lyapunov-Floquet metric over $v(t)$.
\end{proof}
\section{Operators on exterior powers and singular values}
\label{SEC: OperatorsOnExteriorProducts}
In this appendix we briefly recall some definitions from exterior algebra just to fix notations for the main text.

Let $\mathbb{E}$ be a (finite-dimensional) vector space over $\mathbb{R}$ or $\mathbb{C}$. For any integer $m \geq 1$, we denote by $\mathbb{E}^{\wedge m}$ the $m$-fold exterior power of $\mathbb{E}$. Recall that it is spanned by antisymmetric tensors $v_{1} \wedge \ldots \wedge v_{m}$, where $v_{1},\ldots,v_{m} \in \mathbb{E}$. We have $v_{1} \wedge \ldots \wedge v_{m} = 0$ iff the vectors are linearly dependent. Moreover, for two linearly independent sets of vectors the equality $v_{1} \wedge \ldots \wedge v_{m} = w_{1} \wedge \ldots \wedge w_{m}$ holds iff both $m$-tuples span the same $m$-dimensional subspace $\mathbb{L} \subset \mathbb{E}$ and there exists a special linear transformation of $\mathbb{L}$ taking $v_{1},\ldots,v_{m}$ into $w_{1},\ldots,w_{m}$ respectively. Because of this one says that $v_{1} \wedge \ldots \wedge v_{m}$ is an oriented $m$-volume in $\mathbb{L}$ represented by the parallelepiped with edges $v_{1},\ldots,v_{m}$.

If $\mathbb{E}$ is endowed with an inner product $\langle \cdot, \cdot \rangle$, there is an associated inner product in $\mathbb{E}^{\wedge m}$ given by (the quadratic form is the Gram determinant)
\begin{equation}
	\label{EQ: InnerProductExteriorPower}
	\langle v_{1} \wedge \ldots \wedge v_{m}, w_{1} \wedge \ldots \wedge w_{m} \rangle \coloneq \det( \langle v_{i}, w_{j} \rangle )_{i,j=1}^{m}
\end{equation}
and extended to entire $\mathbb{E}^{\wedge m}$ by linearity in each argument.

With any linear operator $L \colon \mathbb{E} \to \mathbb{F}$ between vector spaces $\mathbb{E}$ and $\mathbb{F}$, one can associate the operator $L^{\wedge m} \colon \mathbb{E}^{\wedge m} \to \mathbb{F}^{\wedge m}$ given by
\begin{equation}
	L^{\wedge m}(v_{1} \wedge \ldots \wedge v_{m}) = Lv_{1} \wedge \ldots \wedge L v_{m}
\end{equation}
for $v_{1},\ldots,v_{m} \in \mathbb{E}$ and extended by linearity to entire $\mathbb{E}^{\wedge m}$.

Let $\langle \cdot, \cdot \rangle_{\mathbb{E}}$ and $\langle \cdot, \cdot \rangle_{\mathbb{F}}$ be inner products in $\mathbb{E}$ and $\mathbb{F}$ respectively. Then the adjoint $L^{*} \colon \mathbb{F} \to \mathbb{E}$ of $L$ is given by
\begin{equation}
	\langle Lv, f \rangle_{\mathbb{F}} = \langle v, L^{*}f \rangle_{\mathbb{E}} \text{ for } v \in \mathbb{E}, \ f \in \mathbb{F}. 
\end{equation}
Let $n = \dim \mathbb{E}$. Then \textit{singular values} $\alpha_{1}(L) \geq \alpha_{2}(L) \geq \ldots \geq \alpha_{n}(L)$ of $L$ are defined as the arranged eigenvalues of the operator $\sqrt{L^{*}L}$.

\begin{remark}
	\label{REM: SingularValuesComputation}
	In the above situation, choose bases in $\mathbb{E}$ and $\mathbb{F}$. Then there are matrices $P_{\mathbb{E}}$, $P_{\mathbb{F}}$ and $M_{L}$ representing the quadratic forms $\langle \cdot, \cdot \rangle_{\mathbb{E}}$ and $\langle \cdot, \cdot \rangle_{\mathbb{F}}$ and the operator $L$ respectively in these bases. Then the singular values of $L$ are given by the singular values of the matrix 
	\begin{equation}
	 \sqrt{ P_{\mathbb{F}} } M_{L} \sqrt{P_{\mathbb{E}}}^{-1}.
	\end{equation}
\end{remark}

A key result\footnote{To prove it, take the orthonormal basis $e_{1},\ldots,e_{n}$ of eigenvectors of $L^{*}L$ and look at the action of $L^{\wedge m}$ in terms of the associated basis $e_{j_{1}} \wedge \ldots \wedge e_{j_{m}}$, where $1 \leq j_{1} < \ldots < j_{m} \leq n$, on $\mathbb{E}^{\wedge m}$.} states that
\begin{equation}
	\label{EQ: KeyResultExteriorOperators}
	\| L^{\wedge m} \| = \sup_{\substack{v_{1},\ldots,v_{m} \in \mathbb{E}\\ |v_{1} \wedge \ldots \wedge v_{m}| = 1}} |Lv_{1} \wedge \ldots \wedge L v_{m}| = \prod_{j=1}^{m} \alpha_{j}(L),
\end{equation}
where $|\cdot|$ denote appropriate norms in $\mathbb{E}^{\wedge m}$ and $\mathbb{F}^{\wedge m}$ induced by the associated with $\langle \cdot, \cdot \rangle_{\mathbb{E}}$ and $\langle \cdot, \cdot \rangle_{\mathbb{F}}$ inner products  respectively and $\| \cdot \|$ is the associated operator norm.

At the end let us introduce the \textit{function of singular values} (of order $d$) $\omega_{d}(L)$ of $L$ (as above) defined for $d = m + s$ with integer $m \geq 0$ and $s \in (0,1]$ by
\begin{equation}
	\label{EQ: SingularValuesFunctionDef}
	\omega_{d}(L) = \prod_{j=1}^{m} \alpha_{j}(L) \cdot \alpha^{s}_{m+1}(L)
\end{equation}
and with the convention that $\omega_{0}(L) \coloneq 1$. Note that $\omega_{d}(L) = \omega^{1-s}_{m}(L) \omega^{s}_{m+1}(L)$. Then from \eqref{EQ: KeyResultExteriorOperators} and \eqref{EQ: SingularValuesFunctionDef} it is clear that $\omega_{d}(L_{2} L_{1}) \leq \omega_{d}(L_{1}) \omega_{d}(L_{2})$ for any operators $L_{1}$ and $L_{2}$ defined on appropriate spaces.

Let $\mathbb{E}$ be a vector bundle over a complete metric space $\mathcal{Q}$. If $\mathfrak{n} = \{ \mathfrak{n}_{q}\}_{q \in \mathcal{Q}}$ is a family of norms (a \textit{metric} on $\mathbb{E}$) $\mathfrak{n}_{q}$ defined in fibers $\mathbb{E}_{q}$ of $\mathbb{E}$ over $q \in \mathcal{Q}$ and $L \colon \mathbb{E} \to \mathbb{E}$ is a bundle morphism given by fiber mappings $L_{q} \colon \mathbb{E}_{q} \to \mathbb{E}_{\vartheta(q)}$ for some transformation $\vartheta \colon \mathcal{Q} \to \mathcal{Q}$, by $\| L_{q} \|_{\mathfrak{n}}$ we denote the norm of $L_{q} \colon (\mathbb{E}_{q},\mathfrak{n}_{q}) \to (\mathbb{E}_{\vartheta(q)},\mathfrak{n}_{\vartheta(q)})$. Analogously, if each $\mathfrak{n}_{q}$ is induced by an inner product $\langle \cdot, \rangle_{q}$ in $\mathbb{E}_{q}$, by $\omega^{(\mathfrak{n})}_{d}(L_{q})$ we denote the function of singular values of $L_{q} \colon (\mathbb{E}_{q}, \langle\cdot,\cdot\rangle_{q}) \to (\mathbb{E}_{\vartheta(q)}, \langle\cdot,\cdot\rangle_{\vartheta(q)})$.

%------
% Insert acknowledgments and information
% regarding funding at the end of the last
% section, i.e., right before the bibliography.
%------

%\begin{ack}
%We thank X.
%\end{ack}

\begin{funding}
The reported study was funded by the Russian Science Foundation (Project 22-11-00172).
\end{funding}

\section*{Data availability}
The data that support the findings of this study can be generated using the scripts in the repository:

\section*{Conflict of interest}
The author has no conflicts of interest to declare that are relevant
to the content of this article.

%------
% Insert the bibliography.
%------

\end{document}